\newtheorem{theorem}{Theorem}[section]
\newtheorem{thm}{Theorem}
\newtheorem{lemma}[thm]{Lemma}
\theoremstyle{definition}
\newtheorem{definition}[theorem]{Definition}
\newtheorem{rmk}{Remark}
\newcommand{\be}{\begin{equation}}
\newcommand{\ee}{\end{equation}}
\newcommand{\bsubeq}{\begin{subequations}}
	\newcommand{\esubeq}{\end{subequations}}
\newcommand{\He}{\mathfrak{E}}
\newcommand{\D}[1]{\mathcal{D}(#1)}
\newcommand{\Ls}{L^2(\Omega)}
\newcommand{\p}{A^{1/2}}
\newcommand{\A}{\mathcal{A}}
\newcommand{\E}{\mathcal{E}}
\newcommand{\Ha}{\mathbb{H}_0}
\newcommand{\Hb}{\mathbb{H}_1}
\newcommand{\Hc}{\mathbb{H}_2}
\numberwithin{equation}{section}
\numberwithin{thm}{section}
\numberwithin{rmk}{section}
\begin{document}
	\title{Vanishing relaxation time dynamics of the Jordan Moore-Gibson-Thompson equation arising in  nonlinear acoustics}
	%\title{Continuity of Solutions of the MGT Equation with respect to the Relaxation of Time: Linear and Nonlinear Analysis}
	%\author{Jing Zhang \\
	%{\small Department of Mathematics} \\
	%{\small University of Virginia} \\
	%{\small Charlottesville, VA 22903 USA} \and
	%Roberto Triggiani \\
	%{\small Department of Mathematics} \\
	%{\small University of Virginia} \\
	%{\small Charlottesville, VA 22903 USA}
	%}
	%\date{}
	\author{
		%Irena Lasiecka \\
		%Department of Mathematical Sciences\\
		%University of Memphis, TN, USA \\[2mm]
		%IBS, Polish Academy of Sciences \\
		%Warsaw, Poland
		%\and
		Marcelo Bongarti, Sutthirut Charoenphon and Irena Lasiecka\\
		{\small Department of Mathematical Sciences}\\
		{\small University of Memphis}\\
		{\small Memphis, TN 38152 USA}\\
		%{\small IBS, Polish Academy of Sciences, Warsaw.}
	}
	\date{}
	\maketitle

	%The abstract of your paper
	\begin{abstract}
		The (third-order in time) JMGT equation  \cite{Jordan2,HCP} is a nonlinear (quasi-linear) Partial Differential Equation (PDE) model introduced to describe a nonlinear propagation  of sound in an acoustic medium. The important feature is that the model avoids the infinite speed of propagation paradox associated with a classical second order in time equation  referred to as Westervelt equation. Replacing Fourier's law by Maxwell-Cattaneo's law  gives rise to the third order in time derivative scaled by a small parameter $\tau >0$, the latter represents the thermal relaxation time parameter and is intrinsic to the medium where the dynamics occur. In this paper we provide an asymptotic analysis of the third order model  when $\tau \rightarrow 0 $. It is shown that the corresponding solutions  converge {\it  in a strong topology of the phase space } to a  limit which is the  solution of Westervelt equation. In addition, rate of convergence is provided for solutions displaying higher order regularity.  This addresses an open question raised in \cite{kaltev2}, where a related JMGT equation has been studied and {\it weak star } convergence  of the solutions when $\tau \rightarrow 0$ has been established. Thus, our main  contribution is showing {\it strong convergence on infinite time horizon,} along with related rates of convergence valid on a finite time horizon.  The key to unlocking the difficulty owns to  a tight control and propagation  of the ``smallness" of the  initial data in carrying the estimates at three different topological levels.  The rate of convergence allows one then to estimate the relaxation time needed for the signal to reach the target. The interest in studying this type of problems is motivated by a large array of applications arising in engineering and medical sciences. 
		%These include applications to welding, lithotripsy, ultrasound technology, noninvasive treatment of kidney stones.

	\end{abstract}
\textbf{Keywords:} Jordan-Moore-Gibson-Thompson equation; third-order evolutions; strong convergence of nonlinear flows; rate of convergence; uniform exponential decays; acoustic waves.
%\newpage
\section{Introduction}
Propagation of  nonlinear waves in an acoustic environment  has been a topic of great interest and activities. Broad range of physical applications
including  ultrasound technology, welding, lithotrips, thermotherapy, ultrasound cleaning, and sonochemistry \cite{kalt,crig, ap2,ap3,ham} are just a few examples. In view of this, it is not surprising that mathematical  models are of great interest and became a highly active field of research.
 We will be considering  the   Jordan-Moore-Gibson-Thompson (JMGT) equation, which, although simple, does display several mathematical intricacies of interest in PDE area and  it is representative of the underlying physics.  The JMGT equation written in the variable $\psi$, which denotes velocity potential, can be recasted  as \begin{equation}\label{jmgt00}
 \tau \psi_{ttt} + \psi_{tt} - c^2\Delta \psi - b^\tau \Delta \psi_t = \frac{d}{dt} [ k(\psi_t)^2]
 \end{equation}
 where $c$ denotes the speed of sound, $\delta >0$ diffusivity  of the sound ($b^\tau = \delta + \tau c^2$), $\tau > 0 $ thermal relaxation time parameter and $k$ a parameter of nonlinearity, see \cite{Jordan2,Jordan0,kalt} and references therein for derivation of the model. Rewriting equation in terms of the pressure $u \sim \rho_0 \psi_t $ ($\rho_0$= mass density) leads to 
 \begin{equation}\label{jmgt1} 
\tau u_{ttt} + (1-2ku)u_{tt} - c^2\Delta u - b \Delta u_t = 2k(u_t)^2.
\end{equation}
 % Physical derivation of this model can be found in \cite{xxx} -along with the interpretation of the parameters. 
 The presence of the  constant $\tau$, which accounts for finite speed of propagation, removes the so called infinite speed of propagation paradox, see \cite{ap2,ap3,ap4,ap5}. In fact, when $\tau =0$ the corresponding PDE becomes the Westervelt equation -- which is of parabolic type -- and with $\tau >0 $ the system is hyperbolic-like and its linear version represents a group. 
%The fact that the model is of third-order in time results from application of the Maxwell-Cattaneo law \cite{ap2, ap3, ap4, ap5}, rather than the more traditional Fourier law in describing heat conductivity. This is to avoid the so-called infinite speed of propagation  paradox -associated with the  Fourier's law. Maxwell -Cattaneo law introduces a ``small" paramete
  Since the parameter $\tau>0$ is relatively small, it is essential  to understand the effects of diminishing values of relaxation. This is a particularly delicate issue since the $\tau$-dynamics is governed by a generator which is {\it  singular}  as $\tau\rightarrow 0.$ 
%This will provide important  information 
%on sensitivity and asymptotic analysis of the model with respect to time relaxation. 
The goal of this paper is to consider the vanishing parameter $\tau \rightarrow 0 $ and its consequences on the  resulting evolution. 
Accordingly  we will show that the Westervelt equation,
\begin{equation}\label{West}
(1-2ku)u_{tt} -c^2\Delta u - \delta\Delta u_t = 2k(u_t)^2
\end{equation}
is a {\it strong}  limit of the JMGT equation, when the relaxation parameter vanishes. In this spirit we shall answer  an open question raised in recent manuscript \cite{kaltev2}, where {\it weak star} convergence has been shown for a related model. In addition, the quantitative  rate of convergence of the corresponding solutions will also  be derived.  
As we shall see, the key in  unlocking the difficulty is a good control of  topological {\it smallness} of the initial data  along with  an appropriate calibration   of the  estimates at various topological levels. 
 %In particular, it will provide information on how long one needs to wait for the signal to reach the target. This will help in constructing numerical schemes in order to capture a realistic behavior of nonlinear solutions. In order to  secure  the wellposedness  for the nonlinear evolution  
%{\it a smallness} of the initial data are required to avoid potential degeneracy where the model fails. At the same time, this raises numbers of mathematical difficulties at the level of  asymptotic estimates.
\ifdefined\xxxxx
\subsection{Physical motivation, modeling and thermal relaxation parameter}
Based on recent   developments in modeling of nonlinear acoustic waves \cite{HCP, ham, kalt,kuz, stokes}  the physical model can be described with the main physical quantities being
$\vec{v}=$ the acoustic particle velocity, $p=$ the acoustics pressure and $\varrho=$ the mass density which can be decomposed as
$$\vec{v}=\vec{v}_0+\vec{v}_\sim,\quad p=p_0+p_\sim\quad \varrho=\varrho_0+\varrho_\sim.$$

The equations governing the propagation of sound in a fluid medium are;
\begin{itemize}
	\item the Navier Stokes equation where $\varsigma\mathtt{v}$ is the bulk viscosity and $\mu\mathtt{v}$ is the shear viscosity
	\begin{equation}\label{nv}
	\varrho\big(\vec{v}_t+(\vec{v}\cdot\bigtriangledown)\vec{v}\big)+\bigtriangledown p=\mu_\mathtt{V}\Delta\vec{v}+\big(\dfrac{\mu_\mathtt{V}}{3}+\varsigma_\mathtt{V}\big)\bigtriangledown(\bigtriangledown\cdot\vec{v});
	\end{equation}
	\item the equation of continuity
	\begin{equation}\label{ec}
	\bigtriangledown\big(\varrho\vec{v}\big)=-\varrho_t;
	\end{equation}
	\item the state equation of relation between $p_\sim$ and $\varrho_\sim$
	\begin{equation}\label{rb}
	\varrho_\sim=\dfrac{p_\sim}{c^2}-\dfrac{1}{\varrho_0c^4}\dfrac{B}{2A}p^2_\sim-\dfrac{k}{\varrho_0c^4}\big(\dfrac{1}{c_\mathtt{V}}-\dfrac{1}{c_p}\big)p_{\sim t},
	\end{equation}
\end{itemize}
where $\dfrac{B}{A}$ is the nonlinear parameter. We subtract the divergence of \eqref{nv} from the time derivative of \eqref{ec}, add \eqref{rb} and neglect the third and higher order terms. We arrive at the Kuznetsov equation. Then the Westervelt equation is obtained by omitting the quadratic velocity term.
\begin{equation}\label{wes}
\dfrac{1}{c^2}u_{tt}-\Delta u-\dfrac{b}{c^2}\Delta(u_t)=\dfrac{\beta_a}{\varrho_0c^4}(u^2)_{tt},
\end{equation}
where $\beta_a=1+\dfrac{B}{2A},$ $u$ denotes the acoustic pressure fluctuations, $c$ the speed of sound, $d$ the diffusivity of sound, $\varrho_0$ the mass density, and $\varrho_0v_t=-\bigtriangledown u.$ Next, 
let $u$ denotes the pressure. Then the potential degeneracy of \eqref{wes} is expressed as
\begin{equation}\label{kz}
(1-2ku)u_{tt} -c^2\Delta u - \delta\Delta u_t = 2k(u_t)^2,
\end{equation}
where $k=-\dfrac{\beta_a}{c^2}$ and $b=\delta+\tau c^2.$ Then P.M. Jordan \cite{jordan2} extended these models based on the original derivation
\begin{equation}
\bigg(\dfrac{d}{dt}+q\bigg)\dfrac{d^2s}{dt^2}=k\biggl\{(1+\alpha\beta)\dfrac{d}{dt}+q\biggr\}\dfrac{d^2s}{dx^2},
\end{equation} 
obtained by G.G. Stokes \cite{stokes} in 1851 to a higher order PDE which is a \textit{third-order} in time PDE model, where $\psi$ denotes the acoustic velocity potential and $\tau$ is the positive constant accounting for relaxation time. We obtain
\begin{equation}
\tau\psi_{ttt}+\psi_{tt}-c^2\Delta\psi-b\Delta\psi=-\bigg(\dfrac{\beta_a}{c^2}(\psi_t)^2\bigg)_t.
\end{equation}

The fact that the model is of third-order in time results from application of the Maxwell-Cattaneo law \cite{ap2, ap3, ap4, ap5}, rather than the more traditional Fourier law in describing heat conductivity. This is to avoid the so-called infinite speed of propagation  paradox -associated with the  Fourier's law. Maxwell -Cattaneo law introduces a ``small" parameter 
$\tau.$

Synthesizing the model from the mathematical-semigroup standpoint we are dealing with the following nonlinear system: this is {\it third-order } in time equation which is a nonlinear (quasi-linear) Partial Differential Equation (PDE) model used to describe the acoustic velocity potential in ultrasound wave propagation. It is referred as  Jordan Moore Gibson Thompson (JMGT) equation
\begin{equation}\label{jmgt}
\tau u_{ttt} + (1-2ku)u_{tt} - c^2\Delta u - b \Delta u_t = 2k(u_t)^2,
\end{equation}
where $u=u(t,x)$ is the acoustic pressure, $k$ is a parameter that depends on the mass density and the constant of the nonlinearity, the parameters $c$ and $b$ denote the speed and diffusivity of the sound, respectively. They are required to be positive. There are boundary conditions associated with the model-say homogeneous Dirichlet boundary conditions. 
There are two immediate features of interest in the model as follows.
\begin{enumerate}[label=(\alph*)]
	\item The presence of the parameter $\tau > 0 $ which corresponds to time relaxation and  makes the problem third order in time.
	\item A possibly degenerate character due to the uncontrolled sign of $ (1 - 2 ku ) $ where $u$ is the unknown solution. 
\end{enumerate}
\fi 

 A mathematical interest in third order equations  stems also from the fact that an existence of semigroup for the \textit{linearization}  fails when the  diffusivity $b =0$  \cite{fatto}.  On the other hand,  on physical grounds the parameter $\tau $  accounts for physically relevant   finite speed of propagation of the waves. Thus, the analysis needs to reconcile ``small" amplitude waves with the limit process.  The main question to contend with is ``how small is small ". This leads to  a string of estimates  with a minimal requirement imposed on the  "smallness".  The latter  is  the key in unlocking the difficulty with  {\it strong} convergence. 
 %n order to respect physics, we must account for the presence of $\tau$.  The feature described in (b)-possible degeneracy tells us that we are dealing with a quasi-linear PDE where solvability may be restricted to waves of small amplitude. However, this is consistent with the applications we have in mind.
%XXXXXXXXXXxx
\subsection{The model and related  literature}

Let $\Omega$ be a bounded domain in $\mathbb{R}^n$ ($n=2,3$) with a $C^2-$boundary $\Gamma = \partial \Omega$ immersed in a resting medium.
 Let  $A: \D{A} \subset \Ls \to \Ls$ be the unbounded, positive self-adjoint and densely  defined operator defined by the action of  the Dirichlet Laplacian, i.e., $A = -\Delta$ with $\D{A} = H_0^1(\Omega) \cap H^2(\Omega)$.% \subset L^2(\Omega).$ %$\mathcal{D}(\p)=H^1_0(\Omega),$ and . Let $T > 0$. 

Let $\delta , c >0$ denote diffusivity and speed of the sound, respectively; $k > 0$ denotes a nonlinear parameter and $b^\tau := \delta + \tau c^2$. Let $T>0$ (could also be $T = \infty$) and the relaxation parameter $\tau \in \Lambda = (0, \tau_0]$ for some $0 < \tau_0 \in \mathbb{R}$ that will be taken as small as needed, but fixed. We then consider a family 
of third-order (in time)  Jordan-Moore-Gibson-Thompson (JMGT) equations \begin{equation}\label{eqnl} \tau u_{ttt}^\tau + (1-2ku^\tau)u_{tt}^\tau + c^2A u^\tau + b^\tau A u_t^\tau = 2k(u_t^\tau)^2 \ \ \mbox{on}  \ \ (0,T) \end{equation} with the  initial conditions:
% \begin{equation} \label{IC}
$u^\tau(0)=u_0,u_t^\tau(0)=u_1, u_{tt}^\tau(0) = u_2.$
%\end{equation}
The wellposedness theory, [both local and global] for the model \eqref{eqnl}  has been well developed  and known by now for each value of $\tau > 0$.  This also includes regularity theory where for sufficiently smooth and compatible initial data one obtains smooth solutions \cite{jmgt,kalt}.  For reader's convenience  we shall recall some of the relevant results below. However, the focus of this paper is on the asymptotic analysis 
when $\tau \rightarrow 0 $.  

%Very important for the analysis of the nonlinear problem we plan on carrying on this paper, we also consider a family of hyperbolic linearized version of the equation \eqref{eqnl} (known as MGT-equation, see \cite{mgtp1})  \begin{equation}\label{eql} \tau u_{ttt}^\tau + \alpha u_{tt}^\tau + c^2A u^\tau +A u_t^\tau = 0 \ \ \mbox{in}  \ \ (0,T) \times \Omega\end{equation} with $\alpha > 0$ and same initial conditions \eqref{IC}.

{\bf Notation:} {Throughout this paper, $L^2(\Omega)$ denotes the space of Lebesgue measurable functions whose squares are integrable and $H^s(\Omega)$ denotes the $L^2(\Omega)$-based Sobolev space of order $s$. We denote the inner product in $L^2(\Omega)$ and by $(u,v) = \int_\Omega uvd\Omega$ and the respective induced norm is denoted by $\|\cdot\|_2.$ If less mentioned normed spaces appear, their norms will be indicated with a sub index, i.e., the norm of a space $Y$ will be denoted by $\|\cdot\|_Y.$  Finally, we denote by $\mathcal{L}(Y)$ the space of bounded linear operators from $Y$ to itself equipped with the uniform norm.
$B(X)_r$ denotes a ball of radius $r$ in a Banach space $X$. The projection $P:\mathbb{R}^3\rightarrow \mathbb{R}^2 $ is defined as $P(a,b,c) = (a,b) $. 
Various constants (generic) will be denoted by letters  $C,c, C_i$ -- they may be different in different occurrences. $C(s)$ denotes a continuous function for $s \geq 0$ and such that $C(0) =0 $.}

We begin  with collecting relevant  results   related to  the  wellposedness of solutions to \eqref{eqnl} for each value of the parameter $\tau > 0$.  
% Since equation \eqref{eqnl}, for fixed $\tau >0$, is a quasi-linear and a potentially degenerate PDE, one expects that the existence theory would involve some restrictions on the initial data. We now quote some past literature results that corroborates this. For better presentation, we introduce three \emph{finite} energy spaces considered in this paper. 
Recalling that $\D{\p} = H_0^1(\Omega)$   we define $\mathbb{H}_0, \mathbb{H}_1, \mathbb{H}_2$ as  follows $$\mathbb{H}_0 \equiv \D{\p} \times \D{\p} \times L^2(\Omega);~\mathbb{H}_1 \equiv \D{A} \times \D{\p} \times L^2(\Omega);~\mathbb{H}_2 \equiv \D{A} \times \D{A} \times \D{\p},$$ with $\D{A}$ and $\D{\p}$ being equipped with the graph norm and the product spaces $\mathbb{H}_i$ ($i = 0,1,2$) equipped with induced euclidean norms. 
%With respect to these spaces %This -- nonlinear -- model has been studied in  \cite{jmgt} where it was shown that for the  initial data sufficiently small  in $\Hb$, i.e., in  a ball $B_{\Hb}(r) $  there  exists  nonlinear semigroup   operator defined on $\Hb$ for all $ t > 0$.  The value of $r$ depends on the physical  parameters. t
%we use a \emph{mass} operator $M_\tau$ given by \begin{equation} \ M_{\tau}:= \left(\begin{array}{ccc}
%1  & 0 & 0 \\
%0 & 1 & 0 \\
%0 & 0 & \tau
%end{array}\right)\label{mop}\end{equation} 

We rewrite the abstract equation \eqref{eqnl} -- in the variable $U^\tau = (u^\tau, u_t^\tau, u_{tt}^\tau)^\top$ -- as the first-order system
\begin{equation}\begin{cases}U^\tau_t(t) = \A^\tau U^\tau(t)+ \tau^{-1} F(U^\tau), \ t>0, \\ U^\tau(0)=U_0=(u_0,u_1,u_2)^\top \end{cases}\label{AbS1}\end{equation}
%or equivalently with $\A^{\tau}(t) = M_{\tau}^{-1} \A_0^{\tau}(t) $ 
%\begin{equation}\begin{cases} U^\tau_t(t) = \mathcal{A}^\tau(t) U^\tau(t)+F(u), \ t>0, \\ U^\tau(0)=U_0=(u_0,u_1,u_2)^T, \end{cases}\label{AbS1i}\end{equation}
where $\A^\tau:= M_{\tau}^{-1}\A^0$ with $\A^0$  and $M_{\tau} $  given by \begin{equation} \A^0:= \left(\begin{array}{ccc}
0  & 1 & 0 \\
0 & 0 & 1 \\
- c^2A & -b^\tau A & -1
\end{array}\right) ; ~~ \ M_{\tau}:= \left(\begin{array}{ccc}
1  & 0 & 0 \\
0 & 1 & 0 \\
0 & 0 & \tau
\end{array}\right) ;~~F(U^\tau) :=  \left(\begin{array}{c} 0\\0\\  2k[(u_t^\tau)^2+u^\tau u_{tt}^\tau] \end{array} \right). \label{mop}\end{equation}  
%\label{op1i}\end{equation}
%and $F(U^\tau) := \left(0, 0, 2k(u_t^\tau)^2\right)^T$ .
% As clearly indicated on problem \eqref{AbS1}, the nonlinear problem takes place when $\alpha(t) = \alpha(u^\tau(t)) = 1- 2ku^\tau(t)$.

\

\begin{rmk}
 Notice that the operator $\A^\tau$ becomes singular as $\tau \to 0$. 
 \end{rmk}
 We shall also introduce spaces $\mathbb{H}^{\tau}_i \equiv M_{\tau}^{1/2} \mathbb{H}_i $ for $ i=0,1,2$ with the  topology 
 $\|U\|_{\mathbb{H}_i^{\tau}} = \|M_{\tau}^{1/2} U\|_{\mathbb{H}_i}$.

With the above setting, we are able to introduce the notion of solution for the nonlinear problem. Denote by $\{T^\tau(t)\}_{t \geqslant 0}$ the $C_0$-semigroup associated with the operator $\A^{\tau} $ for each value of $\tau > 0 $.
%  equation $\tau u_{ttt}^\tau + u_{tt}^\tau + c^2Au^\tau + b Au_{tt}^\tau = 0$. 

This semigroup  exists on each space  $\mathbb{H}_i^\tau , i=0,1,2$. \begin{definition}[\cite{jmgt}]
	We say that $U^\tau(t) = (u^\tau(t),u_t^\tau(t),u_{tt}^\tau(t))^\top$ is a mild solution of \eqref{eqnl} on $[0,T)$ provided 
	%\begin{itemize}
		%\item
		 $U^\tau \in C(0,T; \mathbb{H}_i)$ and 
		 $U^\tau(t)$ satisfies the following integral equation 
		$$ U^{\tau} (t) = T^\tau(t) U(0) + \tau^{-1} \int_0^t T(t-s)F(U^\tau)d\sigma $$ 
		%for $F(U^\tau) = (0,0,2k[(u^\tau_t)^2+u^\tau u_{tt}^\tau])^\top.$
	%\end{itemize}
\end{definition}

\ifdefined\xxxxx

As pointed before, quasi-linear theory requires restrictions on the initial data for obtaining wellposedness and stability of solutions. This, in general, includes requiring smoothness of the initial data (considering that a regularity result of the type `regular IC' implies `regular solution' is established) and also smallness. Although we strongly need regularity in order to carry out estimates and justify them, we aim to improve the approach here by assuming smalless only in the lowest possible topology, which is the one where the linearized problem is wellposed and uniformly stable under some conditions on the parameters. See Remark \ref{topr} for a contextualization under the optics of other works from the literature.
\fi
The linear problem  can be recasted  by taking and $F(U) =0 $ and its wellposedness along with  uniform exponential stability -- on both $\mathbb{H}_0$ and $\mathbb{H}_1$ -- was obtained in \cite{TrigMGT} under the condition (for stability) that $\gamma^\tau := 1 -  \tau c^2b^{-1} >0$. Given a solution $U^\tau \in \Ha^\tau$  its energy is defined via the $u^\tau$-dependent functional \begin{equation}\label{h0en}
E^\tau(t) := E_0^\tau(t) + E_1^\tau(t),~where~
% \label{h0en}
%\end{equation} where \begin{equation}
%\label{ef0lI} 
E_0^\tau(t) = \dfrac{\alpha}{2}\left\|u^\tau_t(t)\right\|_2^2+ \dfrac{c^2}{2}\left\|\p u^\tau(t)\right\|_2^2
\end{equation} and, with $z^\tau(t) := u^\tau_t(t) + \dfrac{c^2}{b}u^\tau(t) \in \D{\p}$,
% \begin{equation}\label{ef1lI}
$E_1^\tau(t) \equiv \dfrac{b}{2}\left\|\p z^\tau(t)\right\|_2^2 + \dfrac{\tau}{2}\left\|z^\tau_t(t)\right\|_2^2 + \dfrac{c^2\gamma^\tau}{2b}\left\|u^\tau_t\right\|_2^2.
$

For  $U^\tau$ in $\Hb^\tau$, the  corresponding energy is given by
% is computed via the $u^\tau$-dependent functional 
\begin{equation}
\mathcal{E}^\tau(t) := E^\tau(t) + \|Au^\tau(t)\|_2^2 \label{h1en},
\end{equation} We also introduce the energy associated with solutions in $\Hc$: $\He^\tau(t) := \E^\tau(t) + \|Au^\tau_t(t)\|_2^2 + \tau \|\p u^\tau_{tt}(t)\|_2^2.$ 
%which will be used later on.

The  following local and global wellposedness results are known \cite{jmgt} : \begin{theorem}[\bf local (in time) wellposedness in $\mathbb{H}_1$, \cite{jmgt}]\label{esh11}
	Let $T>0$ be arbitrary and $\gamma=1-\dfrac{c^2\tau}{b}$. There exists $\rho_{T,\tau} (\gamma)>0$ such that if the initial data $U_0$ satisfies $\E^\tau(0)\leqslant\rho_{T,\tau} (\gamma),$ then there exists a unique solution  such that
	$$\E^\tau(t)< C_{\tau,T}(\E^\tau(0)) < \infty,, ~for~all~ t\in[0,T) $$
	The said solution depends continuously on the initial data  in $\mathbb{H}_1$ topology. An alternative statement holds  for all $U_0 \in \mathbb{H}_1 $  and 
	small time $ T >0$.
\end{theorem}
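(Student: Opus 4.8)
The plan is to recast \eqref{eqnl} as the abstract semilinear Cauchy problem \eqref{AbS1}, $U^\tau_t = \A^\tau U^\tau + \tau^{-1}F(U^\tau)$, and to solve it by a fixed-point argument built on the Duhamel representation appearing in the mild-solution definition. The decisive structural observation is that the quasilinear coefficient $(1-2ku)$ multiplying $u_{tt}$ has already been moved entirely into the source term: writing $(1-2ku)u_{tt}=u_{tt}-2ku\,u_{tt}$ lets the principal part be the \emph{constant-coefficient} operator $\A^\tau$, while all nonlinear and quasilinear effects are carried by $F(U^\tau)=\big(0,0,\,2k[(u^\tau_t)^2+u^\tau u^\tau_{tt}]\big)^\top$. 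Since $\A^\tau$ generates a $C_0$-semigroup $T^\tau(\cdot)$ on $\mathbb{H}_1^\tau$ and, under the hypothesis $\gamma^\tau>0$, is uniformly exponentially stable with $\E^\tau$ serving as a Lyapunov functional equivalent to $\|\cdot\|_{\mathbb{H}_1}^2$ (both facts recalled from \cite{TrigMGT}), the analytic content reduces to controlling $F$.

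First I would verify that $F$ is well defined, quadratic, and locally Lipschitz as a map into $\mathbb{H}_1$ (for fixed $\tau>0$ the norms of $\mathbb{H}_1$ and $\mathbb{H}_1^\tau$ are equivalent). Because only the third slot is nonzero, $\|F(U)\|_{\mathbb{H}_1}=2k\|(u_t)^2+u\,u_{tt}\|_2$. For $n\le 3$ the embeddings $\D{A}\hookrightarrow H^2(\Omega)\hookrightarrow L^\infty(\Omega)$ and $\D{\p}=H_0^1(\Omega)\hookrightarrow L^4(\Omega)$ give
\[
\|(u_t)^2\|_2\le \|u_t\|_{L^4}^2\le C\|\p u_t\|_2^2,\qquad \|u\,u_{tt}\|_2\le \|u\|_{L^\infty}\|u_{tt}\|_2\le C\|Au\|_2\|u_{tt}\|_2,
\]
so that $\|F(U)\|_{\mathbb{H}_1}\le C\|U\|_{\mathbb{H}_1}^2$ and, using the bilinear structure of the two quadratic terms, $\|F(U)-F(\widetilde U)\|_{\mathbb{H}_1}\le C_R\|U-\widetilde U\|_{\mathbb{H}_1}$ whenever $U,\widetilde U\in B(\mathbb{H}_1)_R$. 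Thus $\tau^{-1}F$ is locally Lipschitz from $\mathbb{H}_1$ to $\mathbb{H}_1$.

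With this in hand, local existence, uniqueness and continuous dependence follow from the standard semigroup theory for semilinear equations with a locally Lipschitz nonlinearity: the Duhamel map $\Phi(U)(t)=T^\tau(t)U_0+\tau^{-1}\int_0^tT^\tau(t-s)F(U(s))\,ds$ is a contraction on a ball of $C([0,T];\mathbb{H}_1)$ for $T$ small (depending on $\|U_0\|_{\mathbb{H}_1}$ through $R$), producing a unique maximal mild solution which either exists on all of $[0,\infty)$ or satisfies $\|U^\tau(t)\|_{\mathbb{H}_1}\to\infty$ at a finite blow-up time; continuous dependence in $\mathbb{H}_1$ is read off from the same Lipschitz estimate via Gronwall's inequality. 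This already yields the alternative statement (arbitrary data, small $T$).

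To obtain the small-data/arbitrary-$T$ statement I would close an a priori energy estimate and run a barrier (continuation) argument. Differentiating $\E^\tau$ along the flow, the linear part supplies the dissipation inherited from the uniform stability of $T^\tau$, while the quasilinear/nonlinear part contributes a superquadratic remainder bounded, via the same embeddings, by $C(\E^\tau)^{3/2}$; one thus arrives at a differential inequality of the form $\tfrac{d}{dt}\E^\tau\le -\omega\,\E^\tau+C(\E^\tau)^{3/2}$. A barrier argument then shows that if $\E^\tau(0)\le\rho_{T,\tau}(\gamma)$ is chosen small enough, $\E^\tau(t)$ cannot cross the threshold at which the superquadratic term overtakes the dissipation, so $\E^\tau(t)<C_{\tau,T}(\E^\tau(0))$ throughout $[0,T)$. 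The main obstacle — and the reason smallness is unavoidable — is keeping the top-order coefficient $(1-2ku)$ strictly positive so the problem does not degenerate: this is guaranteed precisely because $\|u(t)\|_{L^\infty}\le C\|Au(t)\|_2\le C\sqrt{\E^\tau(t)}<\tfrac{1}{2k}$ along the controlled trajectory. Calibrating $\rho_{T,\tau}$ so that this positivity is preserved while the Gronwall/barrier constants remain consistent is the delicate point; the role of the hypothesis $\gamma^\tau=1-c^2\tau b^{-1}>0$ is exactly to render $\E^\tau$ a coercive Lyapunov functional powering the estimate.
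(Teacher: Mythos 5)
Your proposal is correct in substance and shares the paper's basic architecture: both recast \eqref{eqnl} as the semilinear problem \eqref{AbS1}, in which the constant-coefficient operator $\A^\tau$ generates the semigroup and the entire quasilinear term $2ku^\tau u^\tau_{tt}$ is relegated to the source $F$, and both solve the resulting Duhamel equation by a fixed point. The organization, however, differs. The paper (whose proof of Theorem \ref{esh11} is by citation to \cite{jmgt}, with the template given in Appendix \ref{app} for the $\mathbb{H}_2$ analogue, Theorem \ref{wph2}) runs a \emph{single} contraction argument on a set of trajectories that are \emph{bounded} in the high norm and \emph{small} in a lower norm ($\mathbb{H}_0^\tau$); existence on the whole prescribed interval $[0,T]$ with threshold $\rho_{T,\tau}$ then falls directly out of the invariance and contraction estimates, with no continuation step, and this two-topology bookkeeping is exactly what later yields the refinement that smallness is needed only in $\mathbb{H}_0$ — the fact the convergence arguments of Section \ref{conv} depend on. You instead use the standard two-stage route: (i) local Lipschitz continuity of $F$ on $\mathbb{H}_1$ plus off-the-shelf semilinear semigroup theory gives a unique maximal mild solution, continuous dependence, and the blow-up alternative (this correctly delivers the ``alternative statement''); (ii) an a priori energy bound plus a barrier argument prevents blow-up before time $T$ when $\E^\tau(0)$ is small. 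This is a legitimate and more modular proof of Theorem \ref{esh11} exactly as stated (its smallness hypothesis is on $\E^\tau$, not on $\mathbb{H}_0$); what it gives up is the lower-topology refinement the paper needs downstream.

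Two points in your stage (ii) need more care than you give them, though neither is fatal. First, the pointwise inequality $\frac{d}{dt}\E^\tau\leqslant-\omega\E^\tau+C(\E^\tau)^{3/2}$ cannot be obtained by differentiating $\E^\tau$ along the flow: the energy identity \eqref{id1} produces dissipation only in $\gamma^\tau\|u^\tau_{tt}\|_2^2$, and the term $(Au^\tau_t,Au^\tau)$ arising from $\frac{d}{dt}\|Au^\tau\|_2^2$ is not even bounded by $\E^\tau$. Recovering coercive full-energy dissipation requires the auxiliary multipliers $u^\tau_t$, $u^\tau$ and $Au^\tau$ — the paper's Steps 1--4 of Section \ref{unlow} — which yield either integral-form stabilizability inequalities such as \eqref{m28} or a perturbed Lyapunov functional merely \emph{equivalent} to $\E^\tau$; in particular the absorption of $\|u^\tau\|_{L^\infty}\|u^\tau_{tt}\|_2^2$ by $\gamma^\tau\|u^\tau_{tt}\|_2^2$ (your nondegeneracy remark, correctly identified) happens at the level of that identity, not of $\E^\tau$ itself. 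Second, these multiplier computations are performed on mild solutions, so they must be justified by regularizing the data and passing to the limit, as the paper notes when it restricts attention to smooth solutions. Both repairs are standard and are carried out in Section \ref{unlow}, so these are gaps of rigor rather than of conception.
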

\begin{theorem}[\bf global (in time) wellposedness, \cite{jmgt}]\label{esh12}
	Let $\gamma>0$. Then,  there exists $\rho_{\tau}(\gamma)>0$ such that if the initial data $U_0$ satisfies $\E^\tau(0)\leqslant\rho_{\tau}(\gamma),$ then there exists a unique solution  such that
	$$\E^\tau(t)\leqslant C_{\tau}(\E^\tau(0) ), t >0 ~and~ \E^\tau(t) \leqslant C_{\tau} (\rho_{\tau} ) e^{-\omega t} ,  t>0, \omega >0 $$
\end{theorem}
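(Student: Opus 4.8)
The plan is to upgrade the local solution furnished by Theorem~\ref{esh11} to a global one by a barrier (continuation) argument driven by an a priori energy estimate, and to read off the exponential decay from the very same estimate. First I would invoke Theorem~\ref{esh11}: for data with $\E^\tau(0)$ small there is a unique solution on a maximal interval $[0,T_{\max})$ along which $t\mapsto\E^\tau(t)$ is finite and continuous. Since $\E^\tau$ controls the $\mathbb{H}_1$ norm, it suffices to show that $\E^\tau$ cannot escape a fixed small ball; standard blow-up alternatives then force $T_{\max}=\infty$.

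The core of the argument is an a priori differential inequality for $\E^\tau$. I would test the equation \eqref{eqnl} with the same multiplier hierarchy used to generate the functionals $E_0^\tau$, $E_1^\tau$ and the top-order piece $\|Au^\tau\|_2^2$ --- namely $u_t^\tau$, the shifted velocity $z_t^\tau$ (with $z^\tau=u_t^\tau+\tfrac{c^2}{b}u^\tau$), and $Au_t^\tau$ --- exactly as in the linear stability analysis of \cite{TrigMGT}. Under the hypothesis $\gamma=1-c^2\tau/b>0$ the linear part of these identities is strictly dissipative and yields a Lyapunov inequality of the form
\[
\frac{d}{dt}\E^\tau(t)+\omega\,\E^\tau(t)\;\leqslant\;\mathcal{N}(t),
\]
where $\mathcal{N}$ collects the contributions of the source $2k(u_t^\tau)^2$ together with those of the quasilinear term $(1-2ku^\tau)u_{tt}^\tau$ once its nonlinear part $-2ku^\tau u_{tt}^\tau$ is moved to the right-hand side. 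Because all these terms are at least quadratic, the essential point is to bound them by $C\,\E^\tau(t)^{1/2}\E^\tau(t)$; the $L^\infty$ control of $u^\tau$ and $u_t^\tau$ needed to estimate the products comes from the Sobolev embedding $\D{A}=H^2(\Omega)\cap H_0^1(\Omega)\hookrightarrow C(\overline\Omega)$, valid for $n=2,3$. This is precisely why the $\mathbb{H}_1$ topology, which carries $\|Au^\tau\|_2$, is the natural setting.

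With this inequality in hand the smallness of the data does double duty. First, choosing $\rho_\tau(\gamma)$ small guarantees that $\|u^\tau\|_{L^\infty}\leqslant C\sqrt{\E^\tau}$ is small enough for $1-2ku^\tau$ to stay uniformly positive, so the coefficient does not degenerate and the energy functionals remain equivalent to the $\mathbb{H}_1$ norm. Second comes the barrier step: set $T^*=\sup\{t\in[0,T_{\max}):\E^\tau(s)\leqslant K\rho_\tau\ \text{for all }s\leqslant t\}$ for a fixed constant $K$. On $[0,T^*)$ the factor $C\,\E^\tau{}^{1/2}\leqslant C\sqrt{K\rho_\tau}$ can be absorbed into $\omega\E^\tau$, leaving $\frac{d}{dt}\E^\tau+\tfrac{\omega}{2}\E^\tau\leqslant 0$, whence $\E^\tau(t)\leqslant\E^\tau(0)\,e^{-\omega t/2}\leqslant\rho_\tau$. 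For $\rho_\tau$ small relative to $K$ this strictly improves the defining bound, so by continuity $T^*=T_{\max}=\infty$. Relabelling the rate gives the stated exponential decay, and both displayed bounds follow with $C_\tau$ absorbing the ($\tau$-dependent) equivalence and non-degeneracy constants; note $C_\tau(0)=0$ as required by our convention for $C(\cdot)$.

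The step I expect to be the main obstacle is the nonlinear energy estimate, specifically the treatment of the degenerate quasilinear term $(1-2ku^\tau)u_{tt}^\tau$. Its nonlinear part contributes, after testing, cubic quantities of the type $\int k\,u^\tau|u_{tt}^\tau|^2$ and $\int k\,u_t^\tau|\p u_t^\tau|^2$ that are \emph{formally of the same order as the leading energy} and must be shown to be genuinely higher order in $\E^\tau$ rather than merely comparable to it. Closing this requires a careful calibration of the multiplier weights so that every such term is either absorbed by the linear dissipation or controlled by $\E^\tau{}^{1/2}\E^\tau$; the uniform positivity of $1-2ku^\tau$ secured by the smallness of the data is exactly what makes this calibration possible.
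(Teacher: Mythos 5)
Your overall architecture — local wellposedness from Theorem \ref{esh11}, an a priori energy estimate closed by smallness, and a barrier/continuation argument yielding both globality and exponential decay — is exactly the strategy behind this result. (Note the theorem itself is cited from \cite{jmgt} and not re-proved in the paper; the closest in-paper proof is Section \ref{unlow}, where Theorem \ref{thm1} strengthens it to a uniform-in-$\tau$ statement by the same multiplier scheme, written in integrated ``stabilizability inequality'' form, $\E^\tau(t)+C_1\int_0^t\E^\tau(\sigma)d\sigma\leqslant C_2\E^\tau(0)$, rather than as a pointwise Lyapunov inequality.)

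There is, however, a concrete gap in the step you yourself single out as the core. The claimed inequality $\frac{d}{dt}\E^\tau+\omega\,\E^\tau\leqslant\mathcal{N}$ does not follow from the multipliers you list ($u_t^\tau$, $z_t^\tau$, $Au_t^\tau$). The identities generated by $u_t^\tau$ and $z_t^\tau$ produce dissipation only in $\|u_{tt}^\tau\|_2^2$ and $\|\p u_t^\tau\|_2^2$; the remaining pieces of $\E^\tau$, namely $\|\p u^\tau\|_2^2$, $\|u_t^\tau\|_2^2$ and $\|Au^\tau\|_2^2$, enter only under time derivatives, so no term $\omega\,\E^\tau$ can be placed on the left-hand side. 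Moreover, $Au_t^\tau$ is not an admissible multiplier at the $\mathbb{H}_1$ level (there $u_t^\tau\in\D{\p}$ only), and even formally it generates the negative quantity $-\tau\|\p u_{tt}^\tau\|_2^2$, which is controlled by nothing in $\E^\tau$; the paper uses $Au_t^\tau$ only in the $\mathbb{H}_2$ analysis of Section \ref{unhigh}, paired with $\lambda Au_{tt}^\tau$ precisely so that the dissipation $\lambda\|\p u_{tt}^\tau\|_2^2$ dominates that bad term. What your argument is missing is the ``energy recovery'' step (Steps 3--4 of Section \ref{unlow}): test additionally with $\lambda u^\tau$ and $\lambda Au^\tau$, whose cross terms such as $\tau(u_{tt}^\tau,Au^\tau)-\frac{\tau}{2}\|\p u_t^\tau\|_2^2+(u_t^\tau,Au^\tau)$ are exact time derivatives absorbable (for $\lambda$ small) into a perturbed functional equivalent to $\E^\tau$, and which produce the missing dissipation $c^2\|\p u^\tau\|_2^2$ and $c^2\|Au^\tau\|_2^2$. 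With that correction your absorption-by-smallness and barrier step goes through verbatim, whether one then phrases the conclusion as your differential inequality or as the paper's integral inequality followed by the standard decay lemma of \cite{las-tat}.
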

%\begin{theorem}[Uniform exponential stability]\label{esh13}
	%There exists $\rho_\tau > 0$ such that if the initial data $U_0$ satisfies $\E^\tau(0) \leqslant \rho_\tau$ then there exist $\omega, C >0$ such that the global solution has energy satisfying $$\E^\tau(t) \leqslant C_{\tau} (\rho_{\tau}  ) e^{-\omega t} ,  t>0.$$
%\end{theorem}
\ifdefined\xxxxx
 \begin{rmk}
	\label{rmkwph2}The same sequence of results given in Theorems \ref{esh11}, \ref{esh12} also holds -- with some adaptations on the estimates -- for the space $\Hc$. In fact, the chain of estimates developed in the later Sections \ref{unlow} and \ref{unhigh} serve as \emph{a priori} estimates used to build up a Fixed Point Theory that guarantees the existence of solutions. In fact, these estimates allow the solutions to be construct under smallness of the initial data being taken with respect to the $\Ha$-topology.
\end{rmk}
\fi
The results above  show, in particular, that there exists a nonlinear flow  defined  globally in time  on a small ball (small radius) in $\mathbb{H}_1.$ In addition, the said solutions display additional regularity, provided the initial data are more regular as we make precise below. 
\begin{theorem}[\bf local (in time) wellposedness in $\mathbb{H}_2$]\label{wph2}
	Let $T>0$ be arbitrary and $\gamma=1-\dfrac{c^2\tau}{b}$. There exists $\rho = \rho_{T,\tau} (\gamma)>0$, such that if the initial data $U_0\in \mathbb{H}_2$ and it  satisfies $E^\tau(0)\leqslant\rho_{T,\tau} (\gamma),$ then there exists a unique solution  $U(t)$  such that
	$$\He^\tau(t)< C_{\tau,T}(\He^\tau(0)) < \infty,, ~for~all~ t\in[0,T) $$
	The said solution depends continuously on the initial data  in $\mathbb{H}_2^\tau$ topology.  In addition,  for each $\tau > 0 $, $u_{ttt}^{\tau} \in C([0,T;] L_2(\Omega))$.  An alternative statement holds  for all $U_0 \in \mathbb{H}_2 $  and 
	small time $ T >0$.
\end{theorem}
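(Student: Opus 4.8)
The plan is to run the same contraction/\emph{a priori}-estimate scheme that yields Theorems \ref{esh11} and \ref{esh12} at the $\mathbb{H}_1$ level (see \cite{jmgt}), but lifted one step up to the $\mathbb{H}_2$ energy $\He^\tau$. Since $\tau>0$ is fixed here, every constant may depend on $\tau$, so no uniform-in-$\tau$ control is needed at this stage. I would freeze the quasilinear coefficient: for $v$ ranging over a set
\begin{equation}
\mathcal{B} = \Big\{\, v \ : \ \sup_{[0,T]} E^\tau[v](t) \leq \rho, \quad \sup_{[0,T]} \He^\tau[v](t) \leq R \,\Big\},
\end{equation}
define $u = \Phi(v)$ as the solution of the linear, time-dependent-coefficient problem
\begin{equation}
\tau u_{ttt} + (1-2kv)\,u_{tt} + c^2 A u + b^\tau A u_t = 2k\,(v_t)^2, \qquad U(0)=U_0,
\end{equation}
and look for a fixed point $u=\Phi(u)$, which is the desired solution. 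The defining (and delicate) feature is that smallness is imposed only on the \emph{lowest} energy $E^\tau$ (the $\mathbb{H}_0$-level), while boundedness and regularity are demanded at the $\mathbb{H}_2$-level $\He^\tau$; reconciling this gap is the crux.

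For the linear solve and the $\He^\tau$-estimate I would exploit a structural observation: differentiating the equation once in time and setting $w:=u_t$ produces
\begin{equation}
\tau w_{ttt} + (1-2ku)\,w_{tt} + c^2 A w + b^\tau A w_t = 6k\,u_t u_{tt},
\end{equation}
which is again an MGT equation of exactly the $\mathbb{H}_1$ form, now in $w$, with the same coefficient $1-2ku$ and a forcing $6k\,u_t u_{tt}\in L^2(\Omega)$ (for the frozen map the coefficient is $1-2kv$ and the forcing is of the same quadratic type). Applying the $\mathbb{H}_1$ multiplier identities of \cite{jmgt,TrigMGT} to $w$ controls precisely $\|Au_t\|_2$ and $\tau\|\p u_{tt}\|_2$ — the two terms by which $\He^\tau$ exceeds $\E^\tau$ — while $\E^\tau$, including $\|Au\|_2$, is governed as in Theorem \ref{esh11} (the top term $\|Au\|_2$ may alternatively be recovered by elliptic regularity directly from the equation). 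The compatible initial value $w_{tt}(0)=u_{ttt}(0)\in L^2(\Omega)$ is read off from the equation at $t=0$, so $(w,w_t,w_{tt})(0)\in\mathbb{H}_1$ whenever $U_0\in\mathbb{H}_2$.

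The heart of the matter is closing the resulting differential inequality for $\He^\tau$ \emph{while keeping smallness only at the $\mathbb{H}_0$-level}. Here the mechanism is a Gagliardo--Nirenberg interpolation tailored to $n=2,3$: the potentially dangerous factors — the $L^\infty$-in-space norm controlling the non-degeneracy of $1-2ku$, and the low-order factor in each quadratic forcing term — are estimated as $\|u\|_{L^\infty}\leq C\|Au\|_2^{\theta}\|\p u\|_2^{1-\theta}$ with $\theta\in(1/2,1)$, i.e.\ as a product of a \emph{bounded} high-order quantity ($\|Au\|_2\lesssim\sqrt{\He^\tau}$) and a \emph{small} low-order quantity ($\|\p u\|_2\lesssim\sqrt{E^\tau}\leq\sqrt{\rho}$). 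Consequently the nonlinear contributions carry a factor of the propagated low energy, and, choosing $\rho$ small, the inequality takes the linear-in-$\He^\tau$ Gronwall form
\begin{equation}
\frac{d}{dt}\He^\tau(t) \leq C_{\tau}\,\sqrt{\rho}\;\He^\tau(t) + C_\tau\big(E^\tau(t)\big),
\end{equation}
which integrates to $\He^\tau(t)\leq C_{\tau,T}\big(\He^\tau(0)\big)$ on $[0,T)$, with the low energy $E^\tau(t)$ remaining small for all time by the lowest-level energy estimate built on the $\mathbb{H}_0$ uniform stability of the linearization \cite{TrigMGT}.

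With the \emph{a priori} bound secured, a barrier/continuation argument fixes $R$ in terms of $\He^\tau(0)$ and $\rho$ small enough that $\Phi$ maps $\mathcal{B}$ into itself and keeps $1-2kv$ non-degenerate throughout $[0,T)$. To obtain the fixed point I would show $\Phi$ is a contraction in the weaker $\mathbb{H}_0$ (or $\mathbb{H}_1$) metric, estimating $\Phi(v^1)-\Phi(v^2)$ through the same energy identities and the uniform $\mathbb{H}_2$-bound on $v^1,v^2$; the standard bootstrap (contraction in the low norm, boundedness in the high norm) then yields a unique solution $u\in\mathcal{B}$, continuous in time in the $\mathbb{H}_2^\tau$-topology via the semigroup representation. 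The extra regularity $u_{ttt}^\tau\in C([0,T];L^2(\Omega))$ is read off from the equation, $\tau u_{ttt}^\tau = -(1-2ku^\tau)u_{tt}^\tau - c^2Au^\tau - b^\tau Au_t^\tau + 2k(u_t^\tau)^2$, once $Au^\tau,Au_t^\tau,u_{tt}^\tau\in C([0,T];L^2(\Omega))$ are known, and continuous dependence follows from the same difference estimate applied to two genuine solutions. The main obstacle, as anticipated, is the third paragraph: calibrating $\rho$ and the interpolation exponents so that the low-level smallness simultaneously tames the Gronwall coefficient and preserves non-degeneracy of the quasilinear coefficient across all three topological levels.
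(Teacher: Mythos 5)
Your proposal is correct in substance, but it follows a genuinely different route from the paper's own proof (Appendix \ref{app}). The paper does \emph{not} freeze the quasilinear coefficient: it treats the entire nonlinearity $f(w)=2k(w_t^2+ww_{tt})$ --- including the quasilinear part $ww_{tt}$ --- as a $\D{\p}$-valued forcing for the \emph{constant-coefficient} linear MGT problem \eqref{alp}, defines the solution map by the Duhamel formula \eqref{wpf} with the exponentially stable semigroup $S(t)$, and proves invariance and contraction of $\Upsilon$ directly in the strong norm $\sup_{t\in[0,T]}\|\cdot\|_{\mathbb{H}_2^\tau}$ on the set $X$ (bounded in $\mathbb{H}_2$, small $\eta$ in $\mathbb{H}_0$), using the same interpolation inequalities you invoke. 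You instead run the classical quasilinear two-norm scheme: a frozen-coefficient, non-autonomous linearization; an $\He^\tau$ a priori bound obtained by differentiating the equation in time and applying the $\mathbb{H}_1$-level multiplier identities to $w=u_t$ (this is exactly the device the paper deploys later, in the proof of Lemma \ref{g1}, to control $\int_0^t\|u^\tau_{ttt}\|_2^2\,d\sigma$, but it does not appear in the appendix proof); a Gronwall closure; and a contraction in the weaker $\mathbb{H}_0$/$\mathbb{H}_1$ metric with boundedness in $\mathbb{H}_2$. Both routes share the key calibration --- smallness only at the $E^\tau$ level, boundedness at the $\He^\tau$ level, bridged by $\|u\|_{L^\infty}\leqslant C\|u\|_{H^1}^{1/2}\|u\|_{H^2}^{1/2}$. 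What the paper's route buys: no solvability theory is needed for a variable-coefficient linearization (your scheme requires Galerkin or Kato-type non-autonomous arguments, as in \cite{jmgt,kaltev2}), and both $C([0,T];\mathbb{H}_2)$-continuity and continuous dependence in $\mathbb{H}_2^\tau$ fall out of the integral representation; the price is constants that degenerate as $\tau\to 0$ (the contraction constant is of order $\eta^{1/4}T/\tau^2$), harmless since the theorem is for fixed $\tau$. What your route buys: energy estimates that track the non-degeneracy of $1-2ku$ explicitly and are of the type that can be made uniform in $\tau$, in the spirit of Sections \ref{unlow}--\ref{conv}. Two steps in your sketch deserve more than a wave: upgrading the fixed point from $L^\infty(0,T;\mathbb{H}_2)\cap C([0,T];\mathbb{H}_1)$ to $C([0,T];\mathbb{H}_2)$ (a genuine step, not a remark), and the fact that your $\rho$ must be allowed to depend on the $\mathbb{H}_2$-bound $R$ through $\|v\|_{L^\infty}\leqslant C\rho^{1/4}R^{1/4}$ --- which is, however, consistent with the paper's own constants, since there $\eta$ is coupled to $\sup_{t}\|W(t)\|_{\mathbb{H}_2^\tau}$ and to $\tau$.
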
 \begin{proof} The result of Theorem \ref{wph2}  is proved by using a fixed-point argument with the techniques reminiscent to the proof of Theorem \ref{esh11}. For completeness, we provide the proof  in the appendix. 
%A stronger stability result (independent of $\tau$) will be obtained in \eqref{thm2} then we do not include it here. 
\end{proof}

\begin{rmk}[\bf wellposedness with data small in lower topology] Theorem \ref{esh11}
 was stated exactly how it was stated in \cite{jmgt}, however, as in Theorem \eqref{wph2}, we can slightly modify the proof in \cite{jmgt} to obtain wellposedness and $\mathbb{H}_1$-exponential stability asking for smallness of the data only in $\mathbb{H}_0.$ The strategy is the same as in the proof of Theorem \ref{wph2}, see Appendix \ref{app}. \end{rmk}

The problem of interest  in this paper is an asymptotic analysis  of solutions when the relaxation parameter $\tau$ tends to zero. It is expected that the limit solution  
should coincide  (formally)  with  the solution of the Westervelt equation (Equation \eqref{eqnl} with $\tau = 0$). %On the other hand, the operator $\A_{\alpha(t)}^\tau$ -- in particular the generator of the linear semigroup -- becomes singular when $\tau\rightarrow 0$. The linear case has been already addressed in \cite{bcl} where it was shown that linear MGT solutions converge strongly to strongly damped wave equation. 
In  \cite{bongarti}  asymptotic analysis  has been performed for  the   linearized equation \eqref{eqnl}. 
It was shown there that  the linear semigroups $U^{\tau} (t)$   converge strongly, in the  topology of both $\mathbb{H}_i, (i =0,1)$ to the limit $U^0(t)$ -- solution of  strongly damped wave equation. Rates of convergence have been also  shown for finite time horizon under additional regularity assumptions. The difficulty  encountered  lies in the fact that the operator $\mathcal{A}^\tau$ is singular when $\tau\rightarrow 0.$ As a consequence,  more standard techniques based on Trotter Kato type theorems \cite{kato}  cannot be applied.  However, the difficulty  was handled by playing  strategically two levels of the estimates 
with one allowing to derive the  convergence rates. 

The aim of the present  work is to obtain related result valid for the nonlinear model \eqref{eqnl}. This entails to proving 
%\begin{itemize}
 {\bf (1)} Uniform (in $\tau$) exponential decay of nonlinear flows; {\bf (2)}   Strong Convergence of the flows  (in an appropriate sense)  with respect to vanishing relaxation parameter $\tau \geq 0$.
 As to the first task, this is routed -- as expected -- in a string of various estimates obtained via  the use of  multipliers. Careful analysis of ``smallness" requirement for the initial data  is critical in this step and also leads to new (more refined)  results  in the analysis of JMGT  equations for a fixed $\tau > 0$.    As to the second task, the main difficulty is topological compatibility of  ``smallness"  of initial data (typical in quasilinear problems)  required by the existence theory  and the passage to the limit via density, where such ``smallness" may not be retained. To unlock the difficulty, we were able to construct solutions with much less stringent assumptions on the said ``smallness" which is just sufficient (it  requires only  the $\mathbb{H}_0$ topology) to carry the limit argument. 
 This, in turn, entails to a new ``look" at the energy estimates (Step 1) where such ``reduced smallness" is  traced and propagated. 
%\end{itemize}
%XXXXXXXXXXXXXXXXXXX
\ifdefined\xxxxxx
 The strategy to overcome this difficulty lies in a very simple observation along with standard ideas -- this regards to the mass operator $M_{\tau}$ introduced in equation \eqref{mop} -- already widely used in mechanics. First observe that the energy functional $E^\tau(t)$ is such that $$E^\tau(t) \approx \|\p u^\tau(t)\|_2^2 + \|\p u_t^\tau(t)\|_2^2 + \tau\|u^\tau_{tt}\|_2^2$$ and such equivalence is $\tau$-independent. Then, in the spirit of this remark, we define the partly $\tau$-weighted space $\Ha^\tau$ -- similar renormalizations will be carried for $\Hb^\tau$ and $\Hc^\tau$ -- as \begin{equation}
\Ha^\tau := \left\{M_\tau^{1/2} V; \ V \in \Ha^\tau\right\}
\end{equation} equipped with the norm $\Ha \ni V \mapsto \|V\|_{\tau,0} := \|M_{\tau}^{1/2} V\|_{\Ha}.$ The norms on $\Hb^\tau$ and $\Hc^\tau$ are defined the same way and denoted by $\|\cdot\|_{\tau,1}$ and $\|\cdot\|_{\tau,2}$, respectively.
%The limiting evolution  changes the  character from a  hyperbolic group  to a  parabolic semigroup. This change  is expected to be reflected by the  asymptotic properties of the spectrum and quantitative  estimates  for the corresponding evolutions.

The literature for the asymptotic (in $\tau$) analysis of (J)MGT equation, to the best of the authors knowledge, consists of three works. First, a fundamental first step for the research developed in this paper, the same authors proved in \cite{bongarti} that under the setting of the partly $\tau$-weighted spaces, the results of linear wellposedness and uniform linear stability can be made uniform in $\tau.$ Precisely, beyond the generation of the semigroup solutions on $\mathbb{H}_i \ (i = 0,1,2)$, it was shown uniform (in $\tau$) stability of linear solutions as stated below (see \cite{bongarti}).

\begin{tcolorbox}[title=Linear $\tau$-asymptotic analysis,colframe=green!75!black]
	\begin{theorem}[uniform (in $\tau$) exponential stability]
		Consider the family $\mathcal{F} = \{T^\tau(t)\}_{\tau>0}$ of groups generated by $\A_\alpha^\tau$ on $\Ha$.  Assume that $\gamma^\tau \equiv \alpha - c^2\tau (b)^{-1}\geq \gamma_0 >0 $. Then, there exists $\tau_0 > 0$ and constants $M = M(\tau_0),\omega = \omega(\tau_0)>0$ $($both independent of $\tau)$ such that $$\left\|T^\tau(t)\right\|_{\mathcal{L}(\mathbb{H}^{\tau}_0)} \leqslant M e^{-\omega t} \ \mbox{for all} \ \tau \in (0,\tau_0] \ \mbox{and} \ t \geqslant 0.$$
	\end{theorem}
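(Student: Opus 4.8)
The plan is to establish the decay by a direct Lyapunov (multiplier) argument carried out entirely at the energy level, with the single overriding requirement that \emph{every} constant produced along the way be independent of $\tau$. Since $\{T^\tau(t)\}$ is a strongly continuous group, it suffices to prove the bound on the dense set of classical solutions and extend by density, so I may freely differentiate in $t$ and integrate by parts. The first reduction is to replace the semigroup norm by the energy $E^\tau(t)$ of \eqref{h0en}: under the standing assumption $\gamma^\tau \geq \gamma_0 > 0$ I would show there exist constants $0 < m(\gamma_0) \leq M(\gamma_0)$, \emph{independent of} $\tau \in (0,\tau_0]$, with
\[
 m\,\|U^\tau(t)\|_{\mathbb{H}_0^\tau}^2 \;\leq\; E^\tau(t) \;\leq\; M\,\|U^\tau(t)\|_{\mathbb{H}_0^\tau}^2 .
\]
The only place where $\tau$ enters the coefficients of $E^\tau$ is the weighted term $\frac{\tau}{2}\|z_t^\tau\|_2^2$, which matches exactly the weighted third slot $\tau\|u_{tt}^\tau\|_2^2$ of the $\mathbb{H}_0^\tau$-norm, while the coefficient $\frac{c^2\gamma^\tau}{2b}$ stays confined to $[\frac{c^2\gamma_0}{2b},\,\frac{c^2}{2b}]$; this is precisely what makes the equivalence uniform, and it is the reason the weighted spaces $\mathbb{H}_i^\tau$ were introduced.

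Next I would record the dissipation identity obtained by testing the linearized equation (i.e. \eqref{AbS1} with $F\equiv 0$) against the natural multipliers. Passing through the Moore--Gibson--Thompson substitution $z^\tau = u_t^\tau + \frac{c^2}{b}u^\tau$, the third-order equation becomes a damped-wave equation for $z^\tau$ with damping coefficient $\gamma^\tau$, and one obtains an identity of the schematic form
\[
 \frac{d}{dt}E^\tau(t) \;=\; -\,D^\tau(t) \;\leq\; 0, \qquad D^\tau(t) \;\gtrsim\; \gamma^\tau\,\|z_t^\tau(t)\|_2^2 ,
\]
so that $E^\tau$ is non-increasing and the factor $\gamma^\tau \geq \gamma_0$ keeps the strength of dissipation bounded below uniformly in $\tau$. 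By itself this controls only a velocity-type quantity and not the full energy (the potential parts $\|\p u^\tau\|_2^2$ and $\|\p z^\tau\|_2^2$ are not yet seen), so it delivers boundedness but not decay.

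The heart of the argument is to upgrade dissipativity to exponential decay via a perturbed Lyapunov functional
\[
 \Phi_\ep^\tau(t) \;:=\; E^\tau(t) + \ep\,\Psi^\tau(t),
\]
where $\Psi^\tau$ is a combination of cross-multiplier terms (for instance $(\tau u_{tt}^\tau + \alpha u_t^\tau,\,u^\tau)$, plus a further term built from $z^\tau$) engineered so that $\frac{d}{dt}\Psi^\tau$ reproduces the missing potential terms $-c_1\|\p u^\tau\|_2^2 - c_2\|\p z^\tau\|_2^2$ modulo quantities already dominated by $D^\tau$. The key structural point is that in $\frac{d}{dt}\Psi^\tau$ the derivative $\frac{d}{dt}(\tau u_{tt}^\tau + \alpha u_t^\tau)$ recombines, \emph{through the equation}, into $-bA z^\tau$, so that $(\,\cdot\,,u^\tau)$ produces $-b(\p z^\tau,\p u^\tau)$ and hence the coercive term $-c^2\|\p u^\tau\|_2^2$, rather than a genuinely singular $u_{ttt}^\tau$ contribution. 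Choosing $\ep = \ep(\gamma_0)$ small but independent of $\tau$ then guarantees simultaneously that $\Phi_\ep^\tau \approx E^\tau$ and that $\frac{d}{dt}\Phi_\ep^\tau(t) \leq -\omega\,\Phi_\ep^\tau(t)$ for some $\omega = \omega(\gamma_0)>0$; Gronwall together with the two-sided equivalence yields $E^\tau(t) \leq M e^{-\omega t} E^\tau(0)$, which is the asserted bound on $\|T^\tau(t)\|_{\mathcal{L}(\mathbb{H}_0^\tau)}$.

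The main obstacle is not the existence of such a functional --- for fixed $\tau$ this is the classical MGT stability computation --- but the uniformity of every constant as $\tau \downarrow 0$. The cross terms in $\frac{d}{dt}\Psi^\tau$ carry weights $\tau$ and $\tau^{-1}$ inherited from the singular generator $\A^\tau = M_\tau^{-1}\A^0$, and when they are split by Young's inequality one must allocate the weights so that no constant blows up as $\tau \to 0$. Verifying that the admissible range of $\ep$ and the resulting rate $\omega$ can be taken to depend only on $\gamma_0$ (and the fixed data $b,c,\alpha,\tau_0$) is the delicate bookkeeping on which the whole argument rests, and it is exactly here that the hypothesis $\gamma^\tau \geq \gamma_0 > 0$ is indispensable: a vanishing $\gamma^\tau$ would at once degrade the energy equivalence, the strength of the dissipation, and the admissible perturbation size.
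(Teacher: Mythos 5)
Your proposal is correct, and its core is the same multiplier--energy machinery the paper relies on; note first, though, that the paper never actually proves this statement --- it is quoted as a known result from \cite{bongarti} --- so the natural in-paper comparison is the proof of Theorem \ref{thm1} in Section \ref{unlow}, of which the linear case $G(u^\tau)\equiv 0$ is exactly the theorem at hand. The shared ingredients are: the $\tau$-uniform equivalence $E^\tau \approx \|\cdot\|_{\mathbb{H}_0^\tau}^2$ (which, as you say, is the whole point of the weighted spaces), the identity \eqref{id1} obtained through the substitution $z^\tau = u_t^\tau + \frac{c^2}{b}u^\tau$ giving dissipation $\gamma^\tau\|u^\tau_{tt}\|_2^2$ (the paper's dissipation is in $\|u^\tau_{tt}\|_2^2$ rather than your schematic $\|z_t^\tau\|_2^2$, a cosmetic difference once the $u_t^\tau$-multiplier supplies $\|\p u^\tau_t\|_2^2$), and the cross multipliers $u_t^\tau$, $u^\tau$ to recover the potential terms with constants depending only on $\gamma_0, b, c, \tau_0$. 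Where you genuinely differ is the concluding mechanism: you keep everything in differential form, perturb the energy by $\ep\Psi^\tau$, and close with Gronwall, whereas the paper integrates the same identities in time to reach a stabilizability inequality of the form $E^\tau(t) + C_1\int_0^t E^\tau(\sigma)d\sigma \leqslant C_2E^\tau(0)$ (cf.\ \eqref{m20} and \eqref{lower} with $G\equiv 0$) and then invokes the standard lemma of \cite{las-tat}. The two packagings are equivalent in strength for the linear problem; yours is more self-contained, while the paper's integral form is the one that survives the nonlinear extension, where the coefficients inside the time integral degrade by solution-dependent quantities and must be handled by the barrier argument rather than by a pointwise differential inequality. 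One small inaccuracy to flag: $\frac{d}{dt}E^\tau \leqslant 0$ is not exact, since testing with $u_t^\tau$ produces the exact-derivative cross term $\tau\frac{d}{dt}(u^\tau_{tt},u^\tau_t)$ (visible in \eqref{m10}); but $|\tau(u^\tau_{tt},u^\tau_t)| \leqslant C\sqrt{\tau_0}\,E^\tau$, so this term can simply be absorbed into your $\Psi^\tau$ and the argument goes through with all constants uniform in $\tau \in (0,\tau_0]$.
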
 

The exact same result also holds for $\Hb$ with respect to the energy $\E^\tau(t)$ and for $\Hc$ with respect to the energy $\mathfrak{E}^\tau(t)$ defined as $$\mathfrak{E}^\tau(t) := \E^\tau(t) + \left\|Au^\tau_t(t)\right\|_2^2 + \tau \left\|\p u^\tau_{tt}(t)\right\|_2^2 \approx \left\|(u^\tau(t),u^\tau_t(t),u^\tau_{tt}(t))\right\|_{\tau,2}^2.$$
\end{tcolorbox}

The above allows to construct a limit semigroup $\{T(t)\}_{t \geqslant 0}$ corresponding to $\tau =0$ on $\mathbb{H}_{0}^0 := \D{\p} \times \D{\p}$ (and also in $\mathbb{H}_1^0 := \D{A} \times \D{\p}$) which is analytic because of the strong damping. This is in strong contrast to the semigroups $\{T^{\tau}(t)\}_{t \geqslant 0}, \ \tau > 0$ -- which are of hyperbolic type and time reversible (group). The following convergence results were the main results in \cite{bongarti}.

\begin{tcolorbox}[title = Convergence of linear semigroups,colframe=green!75!black]
	
	Let $P$ denote the projection on the two first coordinate, i.e., $P(a,b,c) = (a,b).$ 
\begin{theorem}[rate of convergence]\label{tlc1}
	Let $U_0 \in \mathbb{H}_2.$ Then there exists $C = C(T, \tau_0)$ such that  $$\left\|P T^\tau(t) U_0 - T(t)PU_0\right\|_{\mathbb{H}_0^0}^2 \leqslant C \tau |U_0|_{\tau,2}^2,$$  uniformly for  $ t \in [0,T].$ The same works replacing $\Ha^0$ by $\Hb^0.$
\end{theorem}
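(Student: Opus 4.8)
The plan is to work directly with the difference of the two flows. Write $u^\tau$ for the linear JMGT solution with data $U_0$ and $u$ for the limiting strongly damped wave solution with data $PU_0$, and set $w:=u^\tau-u$, so that $(w,w_t)=PT^\tau(t)U_0-T(t)PU_0$. Subtracting the two equations, using $b^\tau=\delta+\tau c^2$ and $\tau u_{ttt}^\tau=-(u_{tt}^\tau+c^2Au^\tau+b^\tau Au_t^\tau)$, shows that $w$ solves the limit equation with \emph{zero} initial data but a singular source,
\begin{equation}
 w_{tt}+c^2Aw+\delta Aw_t=f^\tau:=-\tau\big(u_{ttt}^\tau+c^2Au_t^\tau\big),\qquad w(0)=w_t(0)=0.\nonumber
\end{equation}
The whole difficulty is concentrated in $f^\tau$: by the uniform (in $\tau$) $\mathbb{H}_2$-stability recalled above, $\He^\tau(t)\le Me^{-\omega t}|U_0|_{\tau,2}^2$, the best available control of the top term is $\sqrt\tau\,\|\p u_{tt}^\tau\|_2\le C|U_0|_{\tau,2}$, so $\|f^\tau\|_2$ is only $O(\tau^{-1/2})$ and a naive Duhamel/$L^2$ bound is hopeless.

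First I would exploit the conservative structure of the worst term: $-\tau u_{ttt}^\tau=\partial_t(-\tau u_{tt}^\tau)$, and $\tau u_{tt}^\tau$ is itself $O(\sqrt\tau)$ in $\D{\p}$ (again by $\He^\tau$). Introduce the shifted velocity $v:=w_t+\tau u_{tt}^\tau$. A direct computation absorbs the time derivative and turns the problem into the first-order system $\partial_t(w,v)^\top=\mathcal{G}(w,v)^\top+(\Phi^\tau,Ag^\tau)^\top$, $(w,v)(0)=0$, where $\mathcal{G}$ is the generator of the (analytic) limit semigroup $\{T(t)\}$, $\Phi^\tau:=-\tau u_{tt}^\tau$ and $g^\tau:=\tau(\delta u_{tt}^\tau-c^2u_t^\tau)$. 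The gain is that both $\Phi^\tau$ and $g^\tau$ are now \emph{undifferentiated} and $O(\sqrt\tau)$ in $\D{\p}$, uniformly on $[0,T]$. A low-level energy estimate (multiplier $v$: the damping $\delta\|\p v\|_2^2$ absorbs $(Ag^\tau,v)=(\p g^\tau,\p v)$, and the coupling $c^2(\p w,\p\Phi^\tau)$ is handled by Gronwall) already yields $\|\p w(t)\|_2^2+\|w_t(t)\|_2^2\le C(T)\tau|U_0|_{\tau,2}^2$, i.e.\ the sharp rate in the weaker topology $\D{\p}\times L^2$.

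The main obstacle is upgrading the velocity component from $L^2$ to $\D{\p}$, as required by $\mathbb{H}_0^0=\D{\p}\times\D{\p}$: reaching that level by a higher multiplier would test against $A\Phi^\tau$ and $Ag^\tau$, which demand \emph{two} spatial derivatives of smallness, whereas $\He^\tau$ controls $u_{tt}^\tau$ only in $\D{\p}$. Here I would invoke analyticity of the limit semigroup—precisely where the strong damping is essential. Representing $(w,v)(t)=\int_0^t T(t-s)(\Phi^\tau(s),Ag^\tau(s))^\top\,ds$, the smooth part $(\Phi^\tau,0)$ is controlled by the uniform bound $\|T(t)\|_{\mathcal{L}(\mathbb{H}_0^0)}\le M$ together with $\|\p\Phi^\tau\|_2=O(\sqrt\tau)$, while the rough part $(0,Ag^\tau)$ is handled by the smoothing estimate $\|\mathcal{G}^{\theta}T(\sigma)\|_{\mathcal{L}(\mathbb{H}_0^0)}\le C\sigma^{-\theta}$ for a suitable $\theta\in(1/2,1)$: the one-derivative deficit of $Ag^\tau$ is traded for the integrable time singularity $(t-s)^{-\theta}$, so that $\int_0^t(t-s)^{-\theta}\|\p g^\tau(s)\|_2\,ds\le C(T)\sqrt\tau|U_0|_{\tau,2}$. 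Undoing the shift (which costs only $\|\p(\tau u_{tt}^\tau)\|_2=O(\sqrt\tau)$) gives $\|(w,w_t)(t)\|_{\mathbb{H}_0^0}\le C(T,\tau_0)\sqrt\tau|U_0|_{\tau,2}$ on $[0,T]$, and squaring yields the claim.

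The $\mathbb{H}_1^0$ statement follows by carrying the position component in $\D{A}$ throughout—using that $Au^\tau$ and $Au$ are uniformly bounded by $|U_0|_{\tau,2}$ and the same smoothing estimate for $T$—rather than by interpolating between the low-topology rate and the uniform $\mathbb{H}_1^0$ bound, since interpolation degrades the exponent and therefore cannot reach the sharp squared rate $\tau$. I expect the delicate point to be the precise identification of the fractional-power domain in the smoothing step (the admissible range of $\theta$ and the estimate $\|\mathcal{G}^{-\theta}(0,Ag^\tau)\|_{\mathbb{H}_0^0}\lesssim\|\p g^\tau\|_2$), which is exactly the mechanism that converts the singular source $\tau u_{ttt}^\tau$ into an $O(\sqrt\tau)$ contribution on the finite horizon.
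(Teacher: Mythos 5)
Your reduction is attractive and its first half is sound: the shifted velocity $v:=w_t+\tau u^\tau_{tt}$ does turn the difference problem into $\partial_t(w,v)^\top=\mathcal{G}(w,v)^\top+(\Phi^\tau,Ag^\tau)^\top$ with $\Phi^\tau=-\tau u^\tau_{tt}$ and $g^\tau=\tau(\delta u^\tau_{tt}-c^2u^\tau_t)$ both $O(\sqrt\tau)$ in $\D{\p}$ (you only overlook that $v(0)=\tau u_2\neq 0$, which is harmless since $\tau\|\p u_2\|_2\leqslant\sqrt\tau\,|U_0|_{\tau,2}$), and your multiplier-$v$ estimate correctly yields the rate in the topology $\D{\p}\times L^2(\Omega)$. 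The genuine gap is the upgrade step, which is exactly where the theorem lives, since $\mathbb{H}_0^0=\D{\p}\times\D{\p}$. The smoothing estimate you invoke, $\|\mathcal{G}^{-\theta}(0,Ag^\tau)\|_{\mathbb{H}_0^0}\lesssim\|\p g^\tau\|_2$ for some $\theta\in(1/2,1)$, is false. Computing the inverse explicitly gives $\mathcal{G}^{-1}(0,Ag)=(-c^{-2}g,0)$, so for $g$ merely in $\D{\p}$ the datum $(0,Ag)$ sits exactly one \emph{full} power of $\mathcal{G}$ below the state space, and no smaller fractional power absorbs it without extra spatial regularity of $g$ --- regularity that $g^\tau$ does not possess, because under $\mathbb{H}_2$ data $u^\tau_{tt}$ is controlled only in $\D{\p}$. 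One sees the failure concretely on the eigenbasis of $A$: on the parabolic branch $\lambda_-\approx-\delta a$ of the strongly damped wave operator, the velocity component of $T(\sigma)(0,Ag)$ carries the factor $a^{3/2}e^{-\delta a\sigma}|g_n|$, so the sharp uniform bound is $\|T(\sigma)(0,Ag)\|_{\mathbb{H}_0^0}\lesssim(1+\sigma^{-1})\|\p g\|_2$ (take $g=\phi_n$ and $\sigma\sim a_n^{-1}$ to see $\sigma^{-1}$ is attained), and $(t-s)^{-1}$ is not integrable; your Duhamel integral cannot be closed as written.

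The natural repairs run into the very obstruction this approach was designed to avoid. If you exploit the identity $(\Phi^\tau,-\delta A\Phi^\tau)=\mathcal{G}(0,\Phi^\tau)$ and integrate by parts in $s$ in Duhamel's formula, the boundary terms are fine ($O(\sqrt\tau)$ in $\mathbb{H}_0^0$), but the bulk term reintroduces $\Phi^\tau_t=-\tau u^\tau_{ttt}$, and the pointwise information $\tau\|u^\tau_{ttt}\|_2=O(1)$ gives nothing. What makes this term tractable --- and it is the key lemma underlying the paper's own rate-of-convergence proof (Lemma \ref{g1}, used in the proof of Theorem \ref{cc0}, whose linear specialization is what the present statement rests on) --- is the integral bound $\int_0^T\|u^\tau_{ttt}(\sigma)\|_2^2\,d\sigma\leqslant C\tau^{-1}\|U_0\|_{\mathbb{H}_2^\tau}^2$, obtained by differentiating the equation in time, running the $E_1$-energy identity on $v=u^\tau_t$, and reading $\|u^\tau_{ttt}(0)\|_2\leqslant C\tau^{-1}$ off the equation at $t=0$. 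With that lemma the singular source is absorbed by Young's inequality against the dissipation generated by the multipliers $Ax^\tau_t$ and $x^\tau_{tt}$, since $\tau^2\int_0^T\|u^\tau_{ttt}\|_2^2\leqslant C\tau$, and the shift becomes unnecessary. In short: your scheme either needs a smoothing estimate that the strongly damped wave semigroup does not provide, or it needs Lemma \ref{g1} --- the one ingredient your proposal set out to bypass.
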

\begin{theorem}[strong convergence]\label{tlc2}
	Let $U_0 \in \mathbb{H}_0$. Then  the following strong convergence holds. \begin{equation}\label{convn1}
	\left\|P T^\tau(t)U_0  - T(t)PU_0\right\|_{\mathbb{H}_0^0 } \to 0 ~as~\tau \to 0,
	\end{equation} uniformly for all $t \geqslant 0.$ The same works replacing $\Ha^0$ by $\Hb^0.$
\end{theorem}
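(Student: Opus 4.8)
The plan is to combine the finite–horizon rate of Theorem \ref{tlc1}, which is available only on smooth data, with the uniform‑in‑$\tau$ exponential stability of the family $\cF$, through a density plus time–splitting argument of $\epsilon/3$ type. A direct route is blocked precisely by the singularity of $\A^\tau$ as $\tau\to 0$, which rules out Trotter--Kato; all uniformity must instead be extracted from the two ingredients above, and the density step must be carried out compatibly with the $\tau$‑weighted topologies. First I would record two elementary facts. Since $M_\tau$ acts as the identity on the projected coordinates, for every $V\in\Ha$ and $\tau\leq 1$,
$$\|PV\|_{\mathbb{H}_0^0}\leq\|V\|_{\mathbb{H}_0^\tau}\leq\|V\|_{\Ha}.$$
Moreover the limit semigroup $\{T(t)\}$, analytic because of the strong damping, is exponentially stable on $\mathbb{H}_0^0$, say $\|T(t)\|_{\mathcal{L}(\mathbb{H}_0^0)}\leq M_0e^{-\omega_0 t}$, while the uniform stability of $\cF$ furnishes $M,\omega>0$ independent of $\tau$ with $\|T^\tau(t)\|_{\mathcal{L}(\mathbb{H}_0^\tau)}\leq Me^{-\omega t}$.

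Fix $U_0\in\Ha$ and $\epsilon>0$. Choose $U_0^n\in\Hc$ with $\|U_0-U_0^n\|_{\Ha}\to 0$ and decompose
$$PT^\tau(t)U_0-T(t)PU_0 = PT^\tau(t)(U_0-U_0^n) + \Phi_n^\tau(t) + T(t)P(U_0^n-U_0),$$
where $\Phi_n^\tau(t):=PT^\tau(t)U_0^n-T(t)PU_0^n$. By the two facts above, for all $t\geq 0$ and all $\tau\leq\min\{\tau_0,1\}$ the two outer terms are bounded by $(M+M_0)\|U_0-U_0^n\|_{\Ha}$, so I fix $n$ once and for all making their sum $<2\epsilon/3$ uniformly in $t$ and $\tau$.

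It remains to control the middle term $\Phi_n^\tau(t)$ for this fixed smooth datum. Exponential decay of both semigroups gives $\|\Phi_n^\tau(t)\|_{\mathbb{H}_0^0}\leq(Me^{-\omega t}+M_0e^{-\omega_0 t})\|U_0^n\|_{\Ha}$, so there is $T^\ast$ with $\|\Phi_n^\tau(t)\|_{\mathbb{H}_0^0}<\epsilon/3$ for all $t\geq T^\ast$ and all $\tau$. On the compact interval $[0,T^\ast]$, Theorem \ref{tlc1} yields
$$\|\Phi_n^\tau(t)\|_{\mathbb{H}_0^0}^2\leq C(T^\ast,\tau_0)\,\tau\,|U_0^n|_{\tau,2}^2,$$
and since $|U_0^n|_{\tau,2}^2=\|Au_0^n\|_2^2+\|Au_1^n\|_2^2+\tau\|\p u_2^n\|_2^2$ stays bounded (its $\tau$‑weighted part even vanishes) as $\tau\to 0$, the right‑hand side tends to $0$ uniformly on $[0,T^\ast]$. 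Picking $\tau_\ast$ so that this is $<\epsilon/3$ there, and collecting the three estimates, gives $\sup_{t\geq 0}\|PT^\tau(t)U_0-T(t)PU_0\|_{\mathbb{H}_0^0}<\epsilon$ for all $\tau\leq\tau_\ast$, which is the assertion.

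The main obstacle is exactly the calibration between the topological levels in the density step: the rate of Theorem \ref{tlc1} lives in the $\tau$‑weighted norm $|\cdot|_{\tau,2}$, so one must verify simultaneously that approximating only in the weak $\Ha$‑topology is enough to kill the approximation error uniformly in $\tau$, and that smoothing to $\Hc$ is enough to activate the rate \emph{without} the weighted norm of the fixed smooth data blowing up as $\tau\to 0$. Both work only because $M_\tau$ acts trivially on the projected coordinates, which is what makes the two outer terms $\tau$‑uniform and the weighted $\Hc$‑norm of $U_0^n$ stay bounded. The $\Hb^0$ statement then follows verbatim, replacing the uniform stability bound and Theorem \ref{tlc1} by their $\mathbb{H}_1$ versions and using the analyticity and exponential decay of $T(t)$ on $\mathbb{H}_1^0$.
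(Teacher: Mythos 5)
Your proof is correct and follows essentially the same route the paper uses: the statement itself is quoted from \cite{bongarti}, but both that reference and this paper's nonlinear analogue (Theorem \ref{thm3}(b)) run exactly your argument --- approximate the data by $\mathbb{H}_2$ elements, control the two approximation errors uniformly in $t$ and $\tau$ via the uniform exponential stability of $T^\tau(t)$ and $T(t)$, and handle the smooth-data term by splitting time at some $T^\ast$, using the finite-horizon rate of Theorem \ref{tlc1} on $[0,T^\ast]$ and exponential decay beyond it. Your observations that $M_\tau$ acts trivially on the projected coordinates and that $|U_0^n|_{\tau,2}$ stays bounded as $\tau\to 0$ are precisely the calibration points the paper emphasizes, so nothing is missing.
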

\end{tcolorbox}
\fi

This brings us to the very recent works  \cite{kaltev2} and \cite{kaltev1} which study the JMGT--Kuznetsov model for acoustic velocity potential (whereas here we look for the JMGT model for acoustic pressure):
 $\tau\psi_{ttt} + (1-2k\psi_t)\psi_{tt}  - c^2 \Delta \psi - b\Delta \psi_t = 2\nabla\psi\nabla\psi_t$.
 For this model, the authors derive {\it weak star} convergence of solutions $\psi$  to the corresponding limit when $\tau =0$. Because of an  extra derivative in the equation, the topology of convergence is $\mathbb{H}_2$ with the associated ``smallness" requirement  imposed on the initial data. Remark 7.2 in \cite{kaltev2} raises an open question whether this convergence can be improved to the {\it strong} one. Numerical results  presented in  \cite{kaltev2} support the conjecture.  The result of the present  paper appears to confirm validity of this conjecture. Although the equation we study is not exactly the same -- the topological considerations are related. 
 \ifdefined\xxxxx
  -- whose basic energy estimates are not covered by the ones derived on \cite{jmgt} -- and proved a chain of results including wellposedess of both linear and nonlinear equations as well as their corresponding ``$\tau = 0$" version so studying the convergence $\tau \to 0$ would make sense. Notice that, adding to the fact that this equation models acoustic velocity potential and has the presence of the nonlinear quadratic gradient -- which requires higher order energy estimates --, many other features differ from our results presented in this paper. From the technical point of view, the Garlekin method is used to obtain wellposedness of the linearized problem and the existence of solutions for the nonlinear counterpart -- relying on smallness and certain regularity of the data - are proved locally, but not global. For the nonlinear part of the limiting equation (the third-order in time), a combination of Shauder's Fixed Point theorem and the classical Banach's Contraction Principle was used, the reason was that, with the given regularity, the constructed self-mapping would (for the Weltervelt's case, i.e., right hand side equals to zero) be contractive only on a $\tau$-dependent neighborhood, which would be an obstacle in the study of the limit as $\tau \to 0.$ Moreover, the Westervelt equation ($\tau = 0$) was obtained as a weak limit of the JMGT equation, while our approach and setting allow us to conclude strong convergence of the underlying nonlinear semigroups. It's also interesting to notice that the authors used norms which are partly $\tau$-dependent to conclude uniform (in $\tau$) estimates.

The same authors (as in \cite{kaltev2}) considered in \cite{kaltev1} the equation $$\tau\psi_{ttt} + (1-2k\psi_t)\psi_{tt}  - c^2 \Delta \psi - b\Delta \psi_t = 2\nabla\psi\nabla\psi_t$$ under mixing Neumann Boundary conditions (nonhomogeneous for a part of the boundary and absorbing for the other) along with homogenous initial data. The same problem of wellposedness and vanishing (in $\tau$) limit was considered. The authors obtained local (extendable to global) nonlinear wellposedness and proved weak convergence of solutions in $\tau.$ \begin{rmk}\label{topr}
\fi 
	A key feature  to emphasize is that references \cite{kaltev1} and \cite{kaltev2} provide {\it  weak star convergence} of solutions for the initial data  which are small in $\He^\tau$ topology. In contrast, our approach provides not only {\it strong} convergence, but  also the  initial data are required  to be small in a lowest topology $E^\tau$ and only bounded in $\E^\tau$. 
	In addition,  {\it rates of convergence}  of classical solutions on every finite time interval are also established. On the other hand, the model studied in the present paper accounts for Westervelt nonlinearity rather than Kuznetsov. It is anticipated that the methodology  developed in this paper would allow  for an  extension to  Kuznetsov's semilinear  terms.
	
	In closing the introduction, we  list several other mathematical works  dealing with  various variants of JMGT model.  Considerations related to regularity -- including boundary regularity of  linear models -- have been studied in \cite{bucci2017,bucci2020}. MGT models under the effects of memory  have been studied \cite{pata,mem3}. 
	The detailed analysis of regularity and decay  on $\mathbb{R}^3 $, by using Fourier analysis, has been conducted in \cite{pellicer2017}. 
\subsection{Main Results }

\ifdefined\xxxxxxxxx
Our research in this paper is formulated as follows:
\begin{enumerate}
	\item
	Uniform (in $\tau$) stability theory -- within a proper functional analytic PDE setting, for solutions corresponding to \eqref{eqnl} for each $\tau$ with quantitative analysis of dependence on $\tau.$ This will be accomplished  by  energy estimates -- multipliers based -- in order to construct an adequate quasilinear theory and trace dependence on $\tau.$ 
	
	\item
	Develop an asymptotic analysis of said solutions when the limiting parameter $\tau$ tends to zero. In particular, determine the limiting behavior and show how fast solutions are converging to this limit state. This particular aspect is particularly important since the values of time relaxation parameter are small and it is critically important to detect potential degeneracies of solutions of the model in this vanishing-limiting region. For this we shall use the estimates generated in the process of achieving goal $\#1$ in order to develop an appropriate nonlinear semigroup theory adequate for the characterization of our solutions and their limits. 
\end{enumerate}
\fi
As anticipated in the introduction, our first goal is to obtain ``good" stability estimates for the $\tau$ problem with the constants independent of $\tau$. 
However, the main challenge is to obtain these estimates with a minimal requirement  of   topological smallness imposed on the initial data. The latter challenge is  stimulated by applicability of density argument where  the size restrictions on the data becomes a predicament.
The corresponding results, at two different topological levels, are formulated  below. 
%For achieving goals $\#1$ and $\#2$ this paper is organized as follows: \begin{itemize}	
	%\item On Section \ref{unlow} we show that the problem is uniform (in $\tau$) exponential stable on the low topology $\Hb^\tau$ assuming -- the relaxed condition -- boundedness in $\Hb$ and smallness in an even lower topology, $\Ha^\tau.$ This is an improvement of the results in \cite{jmgt}, where smallness was required at the same level. Precisely, we show the Theorem below:
	\begin{theorem}\label{thm1}
		Let $U_0\in\mathbb{H}_1$ and  assume $\|U_0\|_{\mathbb{H}_1^{\tau}}\leqslant r_1$ for  $r_1>0.$  Then, there exists $\rho_1(r_1) > 0$ \emph{(}independent of $\tau$\emph{)} such that 
		 for 
		$\|U_0\|_{\mathbb{H}_0^{\tau} } \leq \rho_1$,  
		there exist $N_1=N_1(r_1,\rho_1) $and $\omega_1>0$ $($independent of $\tau \in \Lambda = (0, \tau_0]$$)$ such that
		$$\E^\tau(t)\leqslant N_1(r_1,\rho_1)e^{-\omega_1 t},$$
		for all $t> 0$ with  $U^\tau=(u^\tau, u_t^\tau, u_{tt}^\tau)$  a mild solution of \eqref{eqnl} on $\Hb.$
	\end{theorem}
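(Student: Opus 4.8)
The plan is to run a continuation (barrier) argument driven by two coupled Lyapunov estimates at the $E^\tau$ and $\E^\tau$ levels, reusing the uniform-in-$\tau$ linear exponential decay on $\Ha^\tau$ and $\Hb^\tau$ from \cite{TrigMGT,bongarti} and absorbing the Westervelt nonlinearity into the linear dissipation. The device that reconciles the two hypotheses --- boundedness of the data in $\Hb^\tau$ (size $r_1$) with smallness only in $\Ha^\tau$ (size $\rho_1$) --- is Gagliardo--Nirenberg interpolation. For $n\le 3$ one has $\|u^\tau\|_{L^\infty}\le C\|Au^\tau\|_2^{3/4}\|u^\tau\|_2^{1/4}$, so on a time interval where $\E^\tau\le R^2=R(r_1)^2$ one gets $\|u^\tau\|_{L^\infty}\le C R^{3/4}(E^\tau)^{1/8}$, which is \emph{small} as soon as $E^\tau$ is small, \emph{regardless of how large $r_1$ is}. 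In particular, choosing $\rho_1=\rho_1(r_1)$ small enough forces the nondegeneracy $\tfrac12\le 1-2ku^\tau\le\tfrac32$; this sign information is precisely what renders the multiplier identities dissipative and is why smallness cannot be dispensed with.

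First I would establish the lower-level ($E^\tau$) estimate. Let $\mathcal V_0^\tau\simeq E^\tau$ be the Lyapunov functional underlying the $\Ha^\tau$ decay, so that $\frac{d}{dt}\mathcal V_0^\tau+2\omega_0\mathcal V_0^\tau\le 0$ along the linear flow uniformly in $\tau$. The nonlinear terms $2k(u_t^\tau)^2$ and $2ku^\tau u_{tt}^\tau$ contribute cubic expressions; after integrating the $u^\tau u_{tt}^\tau$ term by parts in time --- which avoids the $\tau$-singular unweighted factor $\|u_{tt}^\tau\|_2\lesssim\tau^{-1/2}\sqrt{E^\tau}$ --- each surviving term is controlled purely by $E^\tau$. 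For instance, interpolating $L^3$ between $L^2$ and $L^6$ and using $\D{\p}=H_0^1(\Omega)\hookrightarrow L^6(\Omega)$,
\[
\io |u_t^\tau|^3\le\|u_t^\tau\|_2^{3/2}\|u_t^\tau\|_{L^6}^{3/2}\le C(E^\tau)^{3/2}.
\]
This yields $\frac{d}{dt}\mathcal V_0^\tau+2\omega_0\mathcal V_0^\tau\le C(E^\tau)^{3/2}\le C(\mathcal V_0^\tau)^{3/2}$ uniformly in $\tau$, whence, if $E^\tau(0)\le C\rho_1^2$ is small enough that $C\sqrt{E^\tau}<\omega_0$, the lower energy stays small and decays: $E^\tau(t)\le C_0\,E^\tau(0)\,e^{-\omega_0 t}$. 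Note this step uses smallness only of $\|U_0\|_{\Ha^\tau}$, exactly the stated hypothesis.

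The higher-level ($\E^\tau$) estimate is the crux. Starting from the uniform-in-$\tau$ linear Lyapunov functional $\mathcal L_1^\tau\simeq\E^\tau$ of \cite{bongarti}, one has $\frac{d}{dt}\mathcal L_1^\tau+2\omega_1\mathcal L_1^\tau\le\mathcal R_1[u^\tau]$, where $\mathcal R_1$ collects the nonlinearity paired against the $\Hb$-multipliers. The danger is that these pairings produce top-order factors $\|Au^\tau\|_2$ (only \emph{bounded}, by $R$) and, if handled carelessly, $\tau$-singular factors. The resolution is the structural cancellation that already makes the linear functional work: the genuinely top-order and $\tau$-singular contributions must be organized to cancel \emph{within} the multiplier combination rather than be estimated one by one, so that each surviving nonlinear term pairs one bounded top-order factor with at least one small factor --- either $\|u^\tau\|_{L^\infty}\le CR^{3/4}(E^\tau)^{1/8}$ or a low-order norm $\lesssim\sqrt{E^\tau}$ (using the $L^6$-bound on $u_t^\tau$, since an $L^\infty$-bound would require the $\Hc$ level). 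This gives $\mathcal R_1[u^\tau]\le C(r_1)\big((E^\tau)^{1/8}+\sqrt{E^\tau}\big)\E^\tau+C(E^\tau)^{3/2}$. Feeding in the smallness and decay of $E^\tau$ from the previous step, the prefactor is $\le C(r_1)\rho_1^{1/4}$, which for $\rho_1=\rho_1(r_1)$ small becomes $<\omega_1$; hence $\frac{d}{dt}\mathcal L_1^\tau+\omega_1\mathcal L_1^\tau\le C(E^\tau)^{3/2}$ and therefore $\E^\tau(t)\le N_1(r_1,\rho_1)e^{-\omega_1 t}$ with $\omega_1$ independent of $\tau$. A standard bootstrap then closes the argument: calibrating $R$ so that the resulting $N_1$ associated with $\E^\tau(0)\le r_1^2$ satisfies $N_1<R^2$ strictly keeps $\E^\tau(t)<R^2$, so by continuity the barrier is never attained and the estimates hold on all of $[0,\infty)$.

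I expect the main obstacle to be exactly the bound on $\mathcal R_1[u^\tau]$ with constants independent of $\tau$: one must verify that no unweighted $\|u_{tt}^\tau\|_2$ (which is $\tau$-singular) ever survives alone, i.e. that the $\tau$-singular top-order nonlinear contributions cancel inside the Lyapunov combination exactly as in the linear theory, and simultaneously that the Gagliardo--Nirenberg calibration $\rho_1=\rho_1(r_1)$ makes the surviving nonlinear prefactor strictly smaller than the linear decay rate $\omega_1$ while keeping $\omega_1$ and $N_1$ free of $\tau$.
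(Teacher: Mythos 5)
Your overall strategy is in essence the paper's own: coupled estimates at the two levels $E^\tau$ and $\E^\tau$, Agmon--Gagliardo--Nirenberg interpolation to trade smallness in $\Ha^\tau$ against mere boundedness in $\Hb^\tau$, absorption of the quasilinear terms into the dissipation generated by $\gamma^\tau>0$, and a barrier/continuation argument calibrating $\rho_1=\rho_1(r_1)$; the paper packages this as time-integrated stabilizability inequalities (its Steps 1--5, culminating in \eqref{E1}, \eqref{m20} and \eqref{m28}) rather than your differential Lyapunov inequalities, a difference of form, not of substance.

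There is, however, a genuine flaw in your lower-level step as stated. Pairing $G(u^\tau)=2k\bigl(u^\tau u^\tau_{tt}+(u^\tau_t)^2\bigr)$ against the multiplier $z^\tau_t=u^\tau_{tt}+c^2b^{-1}u^\tau_t$ --- and this pairing is forced on you, since the only dissipation available at the $E^\tau$ level is $\gamma^\tau\|u^\tau_{tt}\|_2^2$, cf.\ \eqref{id1} --- produces the diagonal term $2k\io u^\tau (u^\tau_{tt})^2\,d\Omega$. This term is \emph{not} removable by integration by parts in time: $u^\tau(u^\tau_{tt})^2$ has no exact time-derivative structure, and differentiating $\io u^\tau u^\tau_t u^\tau_{tt}\,d\Omega$ instead introduces $u^\tau_{ttt}$, which is worse. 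Nor does it admit any bound of the form $C(E^\tau)^{3/2}$ uniform in $\tau$, because $E^\tau$ controls only $\tau\|u^\tau_{tt}\|_2^2$, so any crude estimate costs a factor $\tau^{-1}$. (Your device does work for $\io(u^\tau_t)^2u^\tau_{tt}\,d\Omega$ and $\io u^\tau u^\tau_t u^\tau_{tt}\,d\Omega$, which are exact derivatives modulo terms of size $(E^\tau)^{3/2}$ --- but not for this one.) The only viable treatment is the one you yourself describe in the opening paragraph and at the $\E^\tau$ level: bound it by $2k\|u^\tau\|_{L^\infty}\|u^\tau_{tt}\|_2^2$ and absorb it into $\gamma^\tau\|u^\tau_{tt}\|_2^2$, which requires $\|u^\tau\|_{L^\infty}$ small, hence --- by the very interpolation you invoke --- the bound $\E^\tau\leqslant R(r_1)^2$. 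Consequently your claim that the $E^\tau$-decay closes by itself, ``using smallness only of $\|U_0\|_{\Ha^\tau}$'' and \emph{before} any control of $\E^\tau$ is available, is circular: the two levels cannot be decoupled sequentially, but must be run simultaneously inside a single continuation argument in which the coefficients $\gamma^\tau-C\|u^\tau\|_{L^\infty}$ (low level) and $\gamma^*-\tilde C_2[E^\tau]^{1/4}[\E^\tau]^{1/4}-\tilde C_3E^\tau$ (high level) are kept positive at once --- precisely what the paper's Step 5 does by choosing $\rho$ so that both barrier conditions hold simultaneously. The repair costs nothing beyond restructuring, since your first paragraph already posits the interval on which $\E^\tau\leqslant R^2$; but as written, the lower-level paragraph is not a proof, and the sentence claiming independence from the higher topology is false.
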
 

%\item On Section \ref{unhigh} we prove the same result as in Theorem \ref{thm1}, but now with respect to the high topology $\Hc^\tau.$ Precisely, we show: 

\begin{theorem}	\label{thm2}
	Let $U_0\in\mathbb{H}_2$  and assume $\|U_0\|_{\mathbb{H}_2^{\tau}}\leqslant r_2$ for  $r_2>0.$ Then,there exists $\rho_2(r_2) $ $($independent of $\tau $$)$  such that for  any   $\|U_0\|_{\mathbb{H}_0^{\tau}}  \leq \rho_2$,  
 there exists $N_2=N_2(r_2,\rho_2) $ and $\omega_2>0$ $($independent of $\tau \in \Lambda = (0, \tau_0]$$)$ such that
	$$\mathfrak{E}^\tau(t)\leqslant N_2(r_2,\rho_2)e^{-\omega_2 t},$$
	for all $t> 0$ with  $U^\tau=(u^\tau, u_t^\tau, u_{tt}^\tau)$ the  solution of \eqref{eqnl} on $\Hc.$
\end{theorem}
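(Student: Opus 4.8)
The plan is to bootstrap the $\He^\tau$-bound on top of Theorem \ref{thm1}, exploiting the fact that smallness is demanded only in the lowest norm $\mathbb{H}_0^\tau$: this smallness, interpolated against the merely bounded higher norms, is what simultaneously keeps the quasilinear coefficient $1-2ku^\tau$ nondegenerate and turns the quadratic terms into absorbable perturbations at the top level. Since $\|U_0\|_{\mathbb{H}_1^\tau}\leqslant C\|U_0\|_{\mathbb{H}_2^\tau}\leqslant Cr_2=:r_1$ with $C$ independent of $\tau$, and choosing $\rho_2\leqslant\rho_1(r_1)$, Theorem \ref{thm1} applies and gives $\E^\tau(t)\leqslant N_1e^{-\omega_1 t}$ with $\tau$-independent constants, together with the companion fact---built into its proof---that the genuinely lowest energy stays as small as the data, $E^\tau(t)\leqslant C(\rho_2)e^{-\omega_1 t}$. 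Local existence of the $\Hc$-solution is provided by Theorem \ref{wph2}, so the theorem reduces to an a priori estimate that a continuity (barrier) argument will propagate to all of $[0,\infty)$.

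The structural engine is a Gagliardo--Nirenberg inequality of the schematic form, valid for $n\leqslant 3$,
\begin{equation}
\|v\|_{L^\infty(\Omega)}\lesssim \|\p v\|_2^{\theta}\|Av\|_2^{1-\theta},\qquad \|\nabla v\|_{L^3(\Omega)}\lesssim \|\p v\|_2^{\theta'}\|Av\|_2^{1-\theta'},\qquad \theta,\theta'\in(0,1),
\end{equation}
which I would apply to $v=u^\tau$ and $v=u_t^\tau$. For $u^\tau$ the low factor $\|\p u^\tau\|_2$ is controlled by the small energy $E^\tau$ and the high factor $\|Au^\tau\|_2$ by the bounded $\E^\tau$; hence $\|u^\tau(t)\|_{L^\infty}$ is uniformly small and $1-2ku^\tau(t)\geqslant\kappa_0>0$ for all $t$ and all $\tau\in\Lambda$. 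For $u_t^\tau$ the low factor $\|\p u_t^\tau\|_2$ is again governed by $E^\tau$ (small), while the high factor $\|Au_t^\tau\|_2$ belongs to $\He^\tau$, the quantity to be bounded; this mixed smallness is what the barrier argument will consume.

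Next I would derive the $\He^\tau$-inequality by augmenting the multiplier scheme behind $\E^\tau$ with the higher-order multipliers $Au_t^\tau$ and, decisively, $Au_{tt}^\tau$ tested against \eqref{eqnl}. The multiplier $Au_{tt}^\tau$ generates the evolution of the new storage terms $\|Au_t^\tau\|_2^2$ and $\tau\|\p u_{tt}^\tau\|_2^2$ and, from the principal part $(1-2ku^\tau)u_{tt}^\tau$, the top-order dissipation $\|\p u_{tt}^\tau\|_2^2$; the fact that this dissipation carries weight $O(1)$ whereas it is stored with weight $O(\tau)$ is precisely why a $\tau$-uniform rate survives as $\tau\to0$. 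Under $\gamma^\tau\geqslant\gamma_0>0$ and the $\tau$-uniform equivalence $\He^\tau(t)\approx\|U^\tau(t)\|_{\mathbb{H}_2^\tau}^2$, assembling all multipliers produces a differential inequality
\begin{equation}
\frac{d}{dt}\He^\tau(t)+\omega_2\He^\tau(t)\leqslant \mathcal{R}(t),
\end{equation}
where $\mathcal{R}$ collects the quadratic remainders, the dominant ones being $-2k\,(u^\tau u_{tt}^\tau,Au_{tt}^\tau)$ and $2k\,((u_t^\tau)^2,Au_{tt}^\tau)$ together with their lower-order analogues.

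Finally I would estimate $\mathcal{R}$. After integration by parts each remainder splits into a product in which the top-order factor is the dissipation $\|\p u_{tt}^\tau\|_2$ and the remaining factors are low-order amplitudes: for instance $(u^\tau u_{tt}^\tau,Au_{tt}^\tau)$ yields a term $\lesssim\|u^\tau\|_{L^\infty}\|\p u_{tt}^\tau\|_2^2$ and a term $\lesssim\|\nabla u^\tau\|_{L^3}\|u_{tt}^\tau\|_{L^6}\|\p u_{tt}^\tau\|_2$, while $((u_t^\tau)^2,Au_{tt}^\tau)\lesssim\|u_t^\tau\|_{L^\infty}\|\p u_t^\tau\|_2\|\p u_{tt}^\tau\|_2$. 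Every amplitude is small---either directly, being controlled by $E^\tau$ (as for $\|\p u_t^\tau\|_2$), or as a geometric mean of a small lower-energy quantity and a bounded higher one (as for $\|u^\tau\|_{L^\infty}$, $\|\nabla u^\tau\|_{L^3}$, $\|u_t^\tau\|_{L^\infty}$). On the provisional set $\{t:\He^\tau(t)\leqslant B\}$ this gives $\mathcal{R}\leqslant\epsilon(\rho_2,B)\He^\tau + C\,e^{-\omega_1 t}$ with $\epsilon\to0$ as $\rho_2\to0$; choosing $\rho_2$ small absorbs $\epsilon\He^\tau$ into the dissipation, and Gr\"onwall yields $\He^\tau(t)\leqslant N_2e^{-\omega_2 t}$ strictly below $B$, so the barrier is never reached and the bound holds globally. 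I expect the main obstacle to be the uniform-in-$\tau$ control of the top-order remainders: one must pair the $\tau$-weighted quantities so that no negative power of $\tau$ appears, verify that the dissipation $\|\p u_{tt}^\tau\|_2^2$ of weight $O(1)$ genuinely dominates its $O(\tau)$ storage uniformly, and ensure that all the smallness feeding the absorption enters only through the $\tau$-independent energy $E^\tau$ and $\tau$-independent Sobolev embeddings, so that neither $N_2$ nor $\omega_2$ degenerates in the singular limit.
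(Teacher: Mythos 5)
Your proposal is correct and follows essentially the same route as the paper: the same higher-order multipliers $Au_t^\tau$ and $Au_{tt}^\tau$, the same interpolation strategy trading $\mathbb{H}_0$-smallness (propagated along trajectories via Theorem \ref{thm1} and Remark \ref{imp}) against mere boundedness of the higher energies, and the same barrier/continuity argument built on top of Theorem \ref{thm1} to make the absorption global in time. The only differences are cosmetic: you close the estimate as a pointwise differential inequality plus Gr\"onwall with a provisional bound on $\He^\tau$ itself, whereas the paper derives the integral stabilizability inequality \eqref{sth2}, whose absorbing coefficients involve only $E^\tau$ and $\E^\tau$, and then invokes the standard conversion to exponential decay.
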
 

Theorems \ref{thm1} and \ref{thm2} allow the construction of a nonlinear flow  corresponding to the limit $\tau = 0.$ We denote this flow,  as $T^\tau(t) = U^\tau(t,U_0)$ and read: the nonlinear flow corresponding to the initial data $U_0.$

%\item 
%On Section \ref{conv} we study the final stage of our analysis. 
Our  final  result  is to show that the Westervelt equation (see \cite{WE}) \begin{equation}\label{jmgt0}
(1-2ku^0) u^0_{tt} + c^2 A u^0 + \delta A u^0_t = 2k(u^0_t)^2,
\end{equation} with the  initial conditions $u^0(0,\cdot) = u_0, \ u^0_t(0,\cdot) = u_1$ is a limit of the JMGT (equation \eqref{eqnl}) when the thermal relaxation parameter vanishes ($\tau \to 0$). Denoting $ U^0(t,V_0)$ the nonlinear flow  for equation \eqref{jmgt0} correspoding to initial data $V_0$ and recalling  $P$ the projection on the first two coordinates (i.e. $P(a,b,c) = (a,b)$, we have :\begin{theorem}\label{thm3}
	\begin{enumerate}
		\item[\emph{a)}] \emph{\textbf{[Rate of Convergence:]}} Let $T > 0$ and  let $U_0\in\mathbb{H}_2$ with $ \|U_0\| _{\mathbb{H}_0^{\tau}} \leq \rho $ for $ \rho  = min\{ \rho_1,\rho_2\}$ $($$\rho_1,\rho_2$ from Theorems \ref{thm1} and \ref{thm2} respectively$)$. Then there exists a $\tau$-independent constant  $C_T$ such that 
		$$\left\|P (U^\tau(t,U_0))- U^{0}( t,PU_0)\right\|_{\D{A} \times \D{\p}}^2 \leqslant  C_T  \tau \|U_0\|_{\mathbb{H}_2^{\tau}}^2$$  uniformly $($in $t$$)$ for  $ t \in [0,T].$
		\item[\emph{b)}] \emph{\textbf{[Strong Convergence:]}} Let $U_0 \in \mathbb{H}_1$  with $\|U_0\| _{\mathbb{H}_0^{\tau} } \leq \rho $ for $\rho$  as above. Then the following strong convergence 
		$$\left\|P (U^\tau(t,U_0))- U^{0}(t,PU_0)\right\|_{\D{A} \times \D{\p}} \rightarrow0 \ \ \mbox{as} \ \ \tau\rightarrow0$$ 
		holds uniformly on $[0,\infty).$
\end{enumerate}\end{theorem}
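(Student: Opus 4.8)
My plan is to work directly with the difference $w:=u^\tau-u^0$. Subtracting the Westervelt equation \eqref{jmgt0} from \eqref{eqnl}, using $b^\tau=\delta+\tau c^2$ together with the algebraic identities $(1-2ku^\tau)u^\tau_{tt}-(1-2ku^0)u^0_{tt}=(1-2ku^\tau)w_{tt}-2kw\,u^0_{tt}$ and $(u^\tau_t)^2-(u^0_t)^2=(u^\tau_t+u^0_t)w_t$, one finds that $w$ solves
\[
(1-2ku^\tau)w_{tt}+c^2Aw+\delta Aw_t=2kw\,u^0_{tt}+2k(u^\tau_t+u^0_t)w_t-\tau u^\tau_{ttt}-\tau c^2 Au^\tau_t .
\]
Because both flows are launched from the same first two components, $w(0)=0$ and $w_t(0)=0$, so every time-boundary term produced below at $t=0$ will vanish. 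The right-hand side splits into genuinely quasilinear terms $\mathcal N:=2kw\,u^0_{tt}+2k(u^\tau_t+u^0_t)w_t$, which carry a factor of the (small, decaying) solution and will be absorbed, and a singular perturbation $\tau\mathcal R:=-\tau(u^\tau_{ttt}+c^2Au^\tau_t)$ that measures the distance between JMGT and Westervelt and must produce the $O(\tau)$ rate.

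For part (a) I would run the higher-order energy estimate obtained by testing the $w$-equation with $Aw_t$; the principal part yields the functional $\tfrac12\|\p w_t\|_2^2+\tfrac{c^2}{2}\|Aw\|_2^2$ together with the dissipation $\delta\|Aw_t\|_2^2$. The quasilinear factor $1-2ku^\tau$ generates additional coupling terms, treated exactly as in the a priori estimates behind Theorems \ref{wph2} and \ref{thm2}, using that Theorem \ref{thm1} keeps $\|u^\tau\|_{\D{A}}$ (hence $\|u^\tau\|_{L^\infty}$) small so that $1-2ku^\tau\ge\tfrac12$. The source $\mathcal N$ carries a factor of the small/decaying solution and is either absorbed into the dissipation or fed into Gronwall against the energy of $w$. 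The benign perturbation $-\tau c^2Au^\tau_t$ contributes, by Young's inequality, $\tfrac{\delta}{4}\|Aw_t\|_2^2+C\tau^2\|Au^\tau_t\|_2^2$, whose time integral is $O(\tau^2)$ since $\|Au^\tau_t\|_2^2\le\He^\tau\le C$ uniformly in $\tau$.

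The heart of (a) is the term $-\tau\int_0^t(u^\tau_{ttt},Aw_t)\,ds$. Since $\A^\tau$ is singular, $u^\tau_{ttt}$ is only $O(\tau^{-1})$ in $L^2$, so a direct bound fails; instead I integrate by parts in time. Using $w_t(0)=0$ this yields $-\tau(\p u^\tau_{tt}(t),\p w_t(t))+\tau\int_0^t(\p u^\tau_{tt},\p w_{tt})\,ds$. The boundary term is $\le\ep\|\p w_t(t)\|_2^2+C\tau^2\|\p u^\tau_{tt}(t)\|_2^2$, and since $\tau\|\p u^\tau_{tt}\|_2^2\le\He^\tau\le C$ it is $O(\tau)$ after absorbing the $\ep$-term into the energy. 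For the remaining integral I would use the \emph{unweighted} integrated dissipation furnished by the multiplier analysis behind Theorem \ref{thm2}, namely $\int_0^\infty\|\p u^\tau_{tt}\|_2^2\,dt\le C\|U_0\|_{\mathbb{H}_2^{\tau}}^2$ uniformly in $\tau$ (it arises by running the $\mathbb H_1$-level dissipation on the time-differentiated equation, and is the decisive point that the pointwise, $\tau$-weighted bound alone does not give); combined with the analogous uniform bound for $\p u^0_{tt}$ this gives $\tau\int_0^t(\p u^\tau_{tt},\p w_{tt})\le C\tau$. Collecting terms and applying Gronwall on $[0,T]$ yields $\|Aw(t)\|_2^2+\|\p w_t(t)\|_2^2\le C_T\,\tau\|U_0\|_{\mathbb{H}_2^{\tau}}^2$, which is exactly (a).

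For part (b) the data lie only in $\mathbb H_1$, so $\mathcal R$ cannot be bounded and the rate argument is unavailable; I would instead use a density/three-$\ep$ scheme anchored by the uniform decay. Given $\ep>0$, uniform exponential decay (Theorem \ref{thm1}, passed to the limit for $U^0$) provides $T_\ep$, independent of $\tau$, with $\|PU^\tau(t)\|_{\D{A}\times\D{\p}}+\|U^0(t)\|_{\D{A}\times\D{\p}}<\ep$ for $t\ge T_\ep$. On $[0,T_\ep]$ I approximate $U_0\in\mathbb H_1$ by $U_0^n\in\mathbb H_2$ in $\mathbb H_1$ (smallness in $\mathbb H_0^\tau$ persists as an open condition, and $\|U_0^n\|_{\mathbb{H}_2^{\tau}}\le\|U_0^n\|_{\mathbb H_2}$ is $\tau$-independent), then split $PU^\tau(t,U_0)-U^0(t,PU_0)$ into the continuous dependence of the two flows on the data (uniform in $\tau$, via Theorems \ref{esh11}, \ref{wph2} and the uniform bound of Theorem \ref{thm1}) plus $PU^\tau(t,U^n_0)-U^0(t,PU^n_0)$, the latter estimated by part (a) and hence $\to0$ as $\tau\to0$ for fixed $n$. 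Choosing $n$ then $\tau$ gives the claim uniformly on $[0,T_\ep]$, and combining with the large-time bound yields convergence uniform on $[0,\infty)$. The single genuine obstacle throughout is the singular perturbation $\tau u^\tau_{ttt}$: it is reconciled only by combining the vanishing initial data $w_t(0)=0$, integration by parts in time, and the $\tau$-uniform, \emph{unweighted} integrated dissipation $\int_0^\infty\|\p u^\tau_{tt}\|_2^2\,dt\le C$; tracking that this and the coefficient smallness require control only in the low $\mathbb H_0^\tau$ topology is precisely what makes the density step in (b) legitimate.
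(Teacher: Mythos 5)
Your part (b) is essentially the paper's own argument: approximate $U_0\in\mathbb{H}_1$ by $\mathbb{H}_2$ data that remain small in $\mathbb{H}_0$ (exactly why the paper insists on smallness only in the lowest topology), use $\tau$-uniform continuous dependence of both flows (the paper proves this as Lemma \ref{conv2}), estimate the middle term by part (a), and pass to $[0,\infty)$ via the uniform decays; no issues there. Part (a), however, departs from the paper precisely at the step that matters, the singular term $\tau u^\tau_{ttt}$. The paper does \emph{not} integrate by parts in time: it proves Lemma \ref{g1}, the integrated bound $\int_0^t\|u^\tau_{ttt}(\sigma)\|_2^2\,d\sigma\le \tau^{-1}C(\|U_0\|_{\mathbb{H}_2}^2)$, obtained by running the $E_1$-identity on the time-differentiated MGT equation, where the singular datum enters only through $\tau\|u^\tau_{ttt}(0)\|_2^2=O(\tau^{-1})$; Young's inequality then gives $\int_0^t(\tau u^\tau_{ttt},Ax^\tau_t+x^\tau_{tt})\,d\sigma\le\varepsilon\int_0^t[\|Ax^\tau_t\|_2^2+\|x^\tau_{tt}\|_2^2]\,d\sigma+C(\varepsilon)\tau$ directly (see \eqref{rc1}). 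So your premise that ``a direct bound fails'' holds only for the naive pointwise bound $\|u^\tau_{ttt}\|_2=O(\tau^{-1})$; the direct route succeeds once one has the refined integrated estimate, and that is the paper's proof.

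Your alternative (integration by parts in time, using $w_t(0)=0$) is a legitimately different idea, but as written it has two concrete gaps. First, the estimate you call decisive, $\int_0^\infty\|\p u^\tau_{tt}\|_2^2\,dt\le C\|U_0\|_{\mathbb{H}_2^\tau}^2$ uniformly in $\tau$, is indeed true and is available in the paper -- it is the term $C_0\int_0^t\|A^{1/2}u_{tt}\|_2^2\,d\sigma$ in \eqref{sth2}, produced by the multipliers $Au^\tau_t$ and $Au^\tau_{tt}$ applied to the \emph{original} equation -- but your parenthetical derivation (``running the $\mathbb{H}_1$-level dissipation on the time-differentiated equation'') does not yield it: the initial energy of the differentiated problem contains $\tau\|u^\tau_{ttt}(0)\|_2^2=O(\tau^{-1})$, so that route gives only $C/\tau$, i.e.\ it reintroduces exactly the singularity you are trying to avoid. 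Second, and more seriously, your remaining integral $\tau\int_0^t(\p u^\tau_{tt},\p w_{tt})\,d\sigma$ involves $\p w_{tt}=\p u^\tau_{tt}-\p u^0_{tt}$, so Cauchy--Schwarz also requires $\int_0^T\|\p u^0_{tt}\|_2^2\,dt$ to be bounded in terms of the data. This is established nowhere: the paper's Theorem \ref{t0} controls only $\int_0^t\|u^0_{tt}\|_2^2\,ds$, with no spatial gradient, and your proposal simply asserts the ``analogous uniform bound.'' It is plausibly provable for Westervelt with $\D{A}\times\D{A}$ data by a parabolic-regularity argument on the time-differentiated equation, but it is an additional lemma you must state and prove. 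A smaller point: testing only with $Aw_t$ leaves you without the dissipation $\int_0^t\|w_{tt}\|_2^2\,d\sigma$ needed to absorb the quasilinear term $2ku^\tau w_{tt}$; you will also need the multiplier $w_{tt}$, as the paper uses in \eqref{l1}. With these three repairs your scheme closes and would constitute a genuinely different proof of the rate, one that never estimates $u^\tau_{ttt}$ at all.
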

%\end{itemize} 
The proofs of the theorems \ref{thm1}, \ref{thm2}, \ref{thm3} are given in the subsequent sections. In what follows, we shall  make few comments regarding the usage of various constants.
%\begin{rmk}We note that nonlinear version of Trotter-Kato framework does not apply due to singularity of the generator and the resolvent. For this problem in particular, Trotter-Kato framework does not work even for the linear part, see \cite{bongarti}.\end{rmk}
%\begin{rmk}Since the manjority of proofs in this paper rely on long estimates, the combination of physical constants sometimes might be tedious to write and do not add much to the analysis of the problem since we aim only to trace the role of $\tau > 0$ throughout the estimates, therefore the following remark, here referred as a \emph{constants-code} made itself necessary in bringing our findinds as clear as possible. Then, the (non pre-fixed physical) constants appearing in our estimates will follow the following code: 
(1) Within an identity or inequality we will use generic constants $C_i$, $i = 1,2,...$, the different indices are only to emphasize the difference of constants in the same expression. (2) The standard dependence on the fixed physical parameters [ $b>0, c>0, \gamma^{\tau} > 0, k \in \mathbb{R} $]  may not be differentiated. However,  critical dependence on $\tau$ is always emphasized. 
 % (3)  Extra dependences will be indicated between parentesis, that is, for example, $C_1(\varepsilon, \kappa)$ means that the constant $C_1$ depends \emph{also} on $\varepsilon$ and $\kappa.$ (4) Dependencies on $\tau$ will be explicit for the sake of determining the rate of convergence, on our singular perturbation results.
	%\end{rmk}

\section{Uniform (in $\tau$) exponential decay  $\Hb^\tau$ -- proof of Theorem \ref{thm1}} \label{unlow}

Let $u^\tau$ be the solution for \eqref{eqnl} whose existence is guaranteed in \cite{jmgt} for every $\tau > 0$. Fix $r_1 > 0$ such that $\|U_0\|_{\Hb^{\tau}} \leqslant r_1.$ 
We shall show that there exists $\rho_1(r_1)$  sufficiently small such that for   $\|U_0\|_{\Ha^{\tau}} \leqslant \rho_1$  we have 
\begin{equation}\label{E1}
 \E^\tau(t) + C_1(r_1,\rho_1) \int_0^t \E^\tau(\sigma)d\sigma \leqslant C_2(r_1) \E^\tau(0)
 \end{equation}
 with $C_1, C_2 > 0, \rho_1 >0$  independent of $\tau$, but possibly  depending  on $r_1$.  Inequality in (\ref{E1}) provides the conclusion of Theorem \ref{thm1} (see \cite{las-tat}). Therefore, our goal is to establish this inequality. The proof  is  accomplished through several steps organized as follows (with the  details given later): \begin{enumerate}
	\item[\bf Step 1:] With reference to the energies \eqref{h0en} and \eqref{h1en}
	we obtain the following $E_1^\tau$-energy identity $$\dfrac{d}{dt}E_1^\tau (t) + \gamma^\tau \|u_{tt}^\tau\|_2^2 = (G(u^\tau),z^\tau_t),$$ where $G(u):= 2k(u^\tau u^\tau_{tt} + (u^\tau_t)^2)$ and $z^\tau \equiv u^\tau_{t} + \dfrac{c^2}{b}u^\tau.$ These calculations are  the same as in (see \cite{jmgt}).
	
	\item[\bf Step 2:] By Sobolev embeddings and interpolation inequalities  we  estimate the  nonlinear terms to obtain 
	that for every $\varepsilon > 0$ there exists constants $C_1(\varepsilon)$ and $C_2 (\varepsilon)$ such that \begin{align}
	E_1^\tau(t)+\int_{0}^{t}&\bigg[\gamma^\tau-(C_1(\varepsilon)\|u^\tau\|_{H^1}^{1/2}||u^{\tau}||_{H^2}^{1/2} +\varepsilon  )\bigg]\|u^\tau_{tt}(\sigma)\|_2^2d\sigma\nonumber\\
	&\leqslant E_1^\tau(0)+\int_{0}^{t}\bigg[C_2(\varepsilon)\|u^\tau_t\|_2\|u^\tau_t\|_{H^1}+\varepsilon \bigg]\|\p u^\tau_{t}(\sigma)\|_2^2d\sigma, \label{st2}
	\end{align}
	
	\item[\bf Step 3:] Combination of Steps 1 and 2  along with another ``energy recovery" estimate  and calibration of the constants   $\varepsilon >0$ gives the first stabilizability inequality:
	 there exist positive constants $C_i, (i = 1,2,3,4)$ such that with  $\gamma^* = \min \{ \gamma^{\tau}, b, c^2  \} $.  $$E^{\tau}(t) + \int_0^t E^{\tau}(\sigma)[C_1 \gamma^* -C_2[E^\tau(\sigma)]^{1/4}\|Au^\tau(\sigma)\|_2^{1/2}-C_3E^\tau(\sigma)]d\sigma \leqslant C_4E^\tau(0).$$

\item[\bf Step 4:] Working further towards the estimate  for the energy $\E^\tau$  we  obtain the  existence of positive constants $\tilde{C}_i, (i = 1,2,3,4)$ such that $$\E^\tau(t) + \int_0^t \E^\tau(\sigma)[\tilde{C}_1 \gamma^*-\tilde{C}_2[E^\tau(\sigma)]^{1/4}[\E^\tau(\sigma)]^{1/4}-\tilde{C}_3E^\tau(\sigma)]d\sigma \leqslant \tilde{C}_4\E^\tau(0).$$

\item[\bf Step 5:] By considering $\E^\tau(0) \leqslant r_1$ and $E^\tau(0) < \rho$  sufficiently small, we obtain  -- from modified Barrier's Method -- \emph{a priori} global bounds. In fact, choosing $\rho$ such that $C_2\rho^{1/4}r_1^{1/4} - C_3\rho \leqslant \gamma^*  C_1/2$ (with reference to the constants from Step 3) and $\tilde{C}_2\rho^{1/4}r_1^{1/4} - \tilde{C}_3\rho \leqslant  \gamma^*  \tilde{C}_1/2$ (with reference to the constants from Step 4) we have the existence of $C>0$ such that the apriori bounds  hold:
\begin{equation}\label{apriori1}
E^\tau(t) \leqslant C\rho \ \ \mbox{and} \ \ \E^\tau(t) \leqslant Cr_1,
\end{equation} uniformly in $t  >0 $ and $\tau \in (0,\tau_0] .$ These  \emph{a priori} bounds imply the desired general inequality
in (\ref{E1}) 
from where the uniform (in $\tau$) exponential decays follows via standard procedure as in, for example, \cite{las-tat,ong}.\end{enumerate}

Next we shall focus on proving  the outlined five steps above. Our analysis is focused on apriori bounds. The issue of existence and regularity  of solutions [for a fixed $\tau$ ] has been dealt with in \cite{jmgt}-thus we can  perform the estimates on  smooth solutions with an eye on obtaining suitable  a priori bounds which are uniform in the parameter $\tau$. .
%    It is important to keep in mind that we work with  sufficiently smooth solutions guaranteed by the wellposedness and regularity theory. 

\begin{proof}[\it Proof of Step 1:]
	 Rewriting  \eqref{eqnl} as
	\begin{equation}\label{nll}
	\tau u^\tau_{ttt}+u^\tau_{tt}+ c^2 Au^\tau + bAu^\tau_t = G(u^\tau),
	\end{equation} where $G(u^\tau) = 2ku^\tau u^\tau_{tt} +2k(u^\tau_t)^2$,
	 %Next, since the coefficient of $u^\tau_{tt}$ is one, we consider
	  %$\gamma^\tau = 1 - \dfrac{\tau c^2}{b^\tau}$, 
	and   recalling   $E^\tau(t) = E_0^\tau(t) + E_1^\tau(t)$ where (as in \eqref{h0en}), $$E_0^\tau(t)= \frac{1}{2}\|u^\tau_t\|_2^2+ \frac{c^2}{2}\|A^{1/2}u^\tau\|_2^2,$$ with  $E_1^{\tau}$  expanded as \begin{equation}\label{e11}
	E^{\tau}_1(t)=\frac{\tau}{2}\|u^\tau_{tt}\|_2^2+\frac{b}{2}\|\p u^\tau_t\|_2^2 +\frac{c^4}{2b}\|\p u^\tau\|_2^2 + c^2(Au^\tau_t,u^\tau)+ \frac{\tau c^2}{ b} (u^\tau_{tt},u^\tau_t) + \frac{c^2}{2b}\|u^\tau_t\|_2^2,
	\end{equation}
	% which gives a more complicated expression to write, but easier to work since it comes naturally from the model. 
	one obtains,  from the  calculations identical to these in \cite{jmgt}, the following:: \begin{lemma}\label{EI}
		\begin{equation}\label{id1}\dfrac{d}{dt}E_1^\tau (t) + \gamma^\tau \|u_{tt}^\tau\|_2^2 = (G(u^\tau), u_{tt}^{\tau} + c^2 b^{-1} u_t^{\tau} ),
		\end{equation}
		%where $G(u^\tau):= 2k(u^\tau u^\tau_{tt} + (u^\tau_t)^2)$ and $z^\tau_t = u^\tau_{tt} + \dfrac{c^2}{b}u^\tau_t.$
	\end{lemma}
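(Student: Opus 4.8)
The plan is to derive the identity by the standard Moore--Gibson--Thompson multiplier, namely the very combination $z^\tau_t = u^\tau_{tt} + c^2 b^{-1} u^\tau_t$ that appears on the right-hand side of \eqref{id1}. The first move is to recast the third-order equation \eqref{nll} in terms of the shifted variable $z^\tau = u^\tau_t + c^2 b^{-1} u^\tau$. Two observations make this clean: the elliptic part collapses, since $c^2 A u^\tau + b A u^\tau_t = bA\big(u^\tau_t + c^2 b^{-1} u^\tau\big) = b A z^\tau$; and, using $u^\tau_{ttt} = z^\tau_{tt} - c^2 b^{-1} u^\tau_{tt}$, the top-order time part becomes $\tau u^\tau_{ttt} + u^\tau_{tt} = \tau z^\tau_{tt} + (1 - \tau c^2 b^{-1}) u^\tau_{tt} = \tau z^\tau_{tt} + \gamma^\tau u^\tau_{tt}$. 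Hence \eqref{nll} is equivalent to $\tau z^\tau_{tt} + \gamma^\tau u^\tau_{tt} + b A z^\tau = G(u^\tau)$, which is the form in which the damping coefficient $\gamma^\tau$ surfaces naturally.

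Next I would take the $L^2(\Omega)$ inner product of this reduced equation with $z^\tau_t$. Each term is engineered to become a perfect time derivative or the sought damping: $\tau(z^\tau_{tt}, z^\tau_t) = \tfrac{\tau}{2}\tfrac{d}{dt}\|z^\tau_t\|_2^2$; using self-adjointness of $A$ together with $(A\,\cdot, \cdot) = \|\p\,\cdot\|_2^2$ (boundary terms vanishing because $\D{A}$ and $\D{\p}$ encode the homogeneous Dirichlet condition), $b(A z^\tau, z^\tau_t) = \tfrac{b}{2}\tfrac{d}{dt}\|\p z^\tau\|_2^2$; and $\gamma^\tau(u^\tau_{tt}, z^\tau_t) = \gamma^\tau\|u^\tau_{tt}\|_2^2 + \tfrac{\gamma^\tau c^2}{2b}\tfrac{d}{dt}\|u^\tau_t\|_2^2$, where the cross term $\tfrac{\gamma^\tau c^2}{b}(u^\tau_{tt}, u^\tau_t)$ integrates as a derivative of $\|u^\tau_t\|_2^2$. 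Collecting the three exact derivatives reproduces precisely $\tfrac{d}{dt}E_1^\tau(t)$ in its bundled form $E_1^\tau = \tfrac{\tau}{2}\|z^\tau_t\|_2^2 + \tfrac{b}{2}\|\p z^\tau\|_2^2 + \tfrac{\gamma^\tau c^2}{2b}\|u^\tau_t\|_2^2$, while the leftover $\gamma^\tau\|u^\tau_{tt}\|_2^2$ and the right-hand side $(G(u^\tau), z^\tau_t) = (G(u^\tau), u^\tau_{tt} + c^2 b^{-1} u^\tau_t)$ yield exactly \eqref{id1}. A brief consistency check confirms that this bundled energy matches the expanded expression \eqref{e11}: expanding $\|z^\tau_t\|_2^2$ and $\|\p z^\tau\|_2^2$ produces the cross terms $\tfrac{\tau c^2}{b}(u^\tau_{tt}, u^\tau_t)$ and $c^2(A u^\tau_t, u^\tau)$, and the spurious $\pm\,\tfrac{\tau c^4}{2b^2}\|u^\tau_t\|_2^2$ contributions cancel against the $\gamma^\tau$ factor, leaving the term $\tfrac{c^2}{2b}\|u^\tau_t\|_2^2$ of \eqref{e11}.

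Since \eqref{id1} is an exact identity rather than an inequality, there is no genuine analytic difficulty in the computation itself; the only point demanding care is the \emph{rigor} of the manipulations. The multiplier $z^\tau_t$, and in particular the term $\tau(z^\tau_{tt}, z^\tau_t)$, presuppose that $u^\tau_{ttt} \in L^2(\Omega)$ and that all the inner products are finite, which holds at the $\mathbb{H}_2$ regularity level furnished by Theorem \ref{wph2} (where $u^\tau_{ttt} \in C([0,T]; L^2(\Omega))$). I would therefore establish the identity first on these smooth solutions, where every step is legitimate, treat it as an \emph{a priori} identity, and pass to the class of solutions of interest by the usual density/approximation argument. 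Finally, the structural hypothesis that renders the identity \emph{useful} (as opposed to merely true) is $\gamma^\tau > 0$, i.e.\ $\tau_0 < b/c^2$, so that the term $\gamma^\tau\|u^\tau_{tt}\|_2^2$ is genuinely dissipative; this is consistent with the standing assumption on the parameter range $\tau \in \Lambda = (0,\tau_0]$.
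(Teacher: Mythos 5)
Your proof is correct and is essentially the paper's own argument: the paper (following \cite{jmgt}) obtains \eqref{id1} by testing \eqref{nll} with the multipliers $u^\tau_{tt}$ and $\tfrac{c^2}{b}u^\tau_t$ and adding the two resulting identities, which is exactly the multiplier $z^\tau_t = u^\tau_{tt} + c^2b^{-1}u^\tau_t$ that you use. Your preliminary rewriting of the equation as $\tau z^\tau_{tt} + \gamma^\tau u^\tau_{tt} + bAz^\tau = G(u^\tau)$ is merely a cleaner bookkeeping of the same computation, and your expansion check correctly reconciles the bundled form of $E_1^\tau$ with \eqref{e11}.
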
 \end{proof}
	%XXXXXXXXXXXX
	\ifdefined\xxxxxxx
	\begin{proof} We start by taking the $L^2-$ inner product of \eqref{nll} with $u^\tau_{tt}(t) \in L^2(\Omega)$ -- $(t \in [0,T])$ -- and then calling $L(t) = [\mbox{LHS of } \eqref{nll}$] we have, \begin{align}\label{m0}
	(L(t),u^\tau_{tt}(t)) &=  \tau(u^\tau_{ttt},u^\tau_{tt})+(u^\tau_{tt},u^\tau_{tt})+c^2(Au^\tau,u^\tau_{tt})+b(Au^\tau_{t},u^\tau_{tt}) \nonumber \\ &= \dfrac{\tau}{2} \dfrac{d}{dt}\|u^\tau_{tt}\|_2^2 + \|u^\tau_{tt}\|_2^2 + c^2\dfrac{d}{dt}(A u^\tau, u^\tau_{t}) -c^2 (\p u^\tau_t,\p u^\tau_t) + b(\p u^\tau_t,\p u^\tau_{tt}) \nonumber \\ &= \dfrac{\tau}{2} \dfrac{d}{dt}\|u^\tau_{tt}\|_2^2 + \|u^\tau_{tt}\|_2^2 + c^2\dfrac{d}{dt}(A u^\tau, u^\tau_{t}) -c^2 \|\p u^\tau_t\|_2^2 + \dfrac{b}{2}\dfrac{d}{dt}\|\p u^\tau_t\|_2^2 \nonumber \\ &= \dfrac{d}{dt}\left[\dfrac{\tau}{2} \|u^\tau_{tt}\|_2^2 + c^2(A u^\tau, u^\tau_{t})+ \dfrac{b}{2}\|\p u^\tau_t\|_2^2\right] +  \|u^\tau_{tt}\|_2^2 -c^2 \|\p u^\tau_t\|_2^2.
	\end{align}
	
	Next, by taking the $L^2$-inner product of \eqref{nll} with $\dfrac{c^2}{b}u^\tau_t(t) \in \D{\p}$ -- $ t \in [0,T]$ -- we have: \begin{align}\label{m1}
	\dfrac{c^2}{b}(L(t),u^\tau_t(t)) &= \dfrac{c^2}{b}\left\{ \tau(u^\tau_{ttt},u^\tau_{t})+(u^\tau_{tt},u^\tau_{t})+c^2(Au^\tau,u^\tau_{t})+b(Au^\tau_{t},u^\tau_{t})\right\} \nonumber \\ &= \dfrac{c^2}{b}\left\{\tau\dfrac{d}{dt}(u^\tau_{tt},u^\tau_t) - \tau\|u^\tau_{tt}\|_2^2 + \dfrac{1}{2}\dfrac{d}{dt}\|u^\tau_t\|_2^2 + c^2(\p u^\tau,\p u^\tau_t) + b(\p u^\tau_t, \p u^\tau_t)\right\} \nonumber \\ &= \dfrac{c^2}{b}\left\{\tau\dfrac{d}{dt}(u^\tau_{tt},u^\tau_t) - \tau\|u^\tau_{tt}\|_2^2 + \dfrac{1}{2}\dfrac{d}{dt}\|u^\tau_t\|_2^2 + \dfrac{c^2}{2}\dfrac{dt}{dt}\|\p u^\tau\|_2^2 + b\|\p u^\tau_t\|_2^2\right\} \nonumber \\ &= \dfrac{d}{dt}\left[ \dfrac{\tau c^2}{b} (u^\tau_{tt},u^\tau_t) +\dfrac{c^2}{2b}\|u^\tau_t\|_2^2 + \dfrac{c^4}{2b}\|\p u^\tau\|_2^2\right] - \dfrac{\tau c^2}{b}\|u^\tau_{tt}\|_2^2 + c^2\|\p u^\tau_t\|_2^2,
	\end{align} then adding \eqref{m0} and \eqref{m1} we have: \begin{equation}\label{m2}
	(L(t),z^\tau_t(t)) = \dfrac{d}{dt}E^\tau_1(t) + \left(1-\dfrac{\tau c^2}{b}\right)\|u^\tau_{tt}\|_2^2 = \dfrac{d}{dt}E^\tau_1(t) + \gamma^\tau\|u^\tau_{tt}\|_2^2
	\end{equation} and then equating it with the LHS of \eqref{nll} also multiplied (in $L^2$) by $z^\tau(t)$, the identity \eqref{id1} follows.
\end{proof} which was our plan in Step 1.\end{proof}
\fi
%XXXXXXX
\begin{proof}[\it Proof of Step 2:]
	Sobolev Embedding $H^{3/4} (\Omega) \hookrightarrow L^4(\Omega)$ along with interpolation  inequality implies that $\|f\|_{L^4} \leqslant \|f\|_2^{1/4}\|f\|_{H^1}^{3/4},\mbox{for all } f \in H^1(\Omega).$ Moreover, we also have the Sobolev Embedding $H^2(\Omega) \hookrightarrow L^\infty(\Omega)$ 
	with the estimate:$\|u\|_{L^{\infty} } \leqslant C\|u\|^{1/2}_{H^1}\|u\|^{1/2}_{H^2}$
	.This gives:
	 \begin{align}
	\label{m3} (u^\tau u^\tau_{tt},u^\tau_{tt})  &\leqslant C\|u^\tau\|_{L^\infty}\|u^\tau_{tt}\|_2^2\leq C ||u^{\tau}||_{H^1}^{1/2} ||u^{\tau}||_{H^2}^{1/2} ||u_{tt}||^2_{2} 
	\end{align}and  with a given $\varepsilon > 0$, 
 \begin{align}
	\label{m4} (u^\tau u^\tau_{tt},u^\tau_{t})  &\leqslant C_1(\varepsilon)\|u^\tau\|_{L^\infty}^2\|u^\tau_{tt}\|_2^2+\varepsilon C_2\|\p u_{t}\|_2^2 \leq C_1(\epsilon)
	||u^{\tau}||_{H^1}||u^{\tau}||_{H^2} \|u^\tau_{tt}\|_2^2+\varepsilon C_2\|\p u_{t}\|_2^2
	\end{align} Since $\|f^2\|_2^2 = \|f\|_{L^4}^4$ for all $f \in L^4(\Omega)$ we have,\begin{align}\label{m5} ((u^{\tau}_t)^2,u^{\tau}_{tt}) 
	\leqslant C_1(\varepsilon)\|u^\tau_t\|_2\|u^\tau_t\|_{H^1}\|\p u^\tau_t\|_2^2 +\varepsilon\|u^
\tau_{tt}\|_2^2,
	\end{align}  \begin{align}\label{m6}
	((u^\tau_t)^2,u^\tau_{t}) &\leqslant C_1(\varepsilon)\|u^\tau_t\|_2\|u^\tau_t\|_{H^1}\|\p u^\tau_t\|_2^2 +\varepsilon C_2\|\p u^
	\tau_{t}\|_2^2.
	\end{align} 
	Now, \eqref{m3}, \eqref{m4}, \eqref{m5} and \eqref{m6} together imply \begin{align}\label{m7}
	(G(u^\tau),z^\tau_t) 
	%2k\left(u^\tau u^\tau_{tt} + (u^\tau_t)^2,u^\tau_{tt}+\dfrac{c^2}{b}u^\tau_t\right) 
	%2k\left[(u^\tau u^\tau_{tt},u^\tau_{tt}) + \dfrac{c^2}{b} (u^\tau u^\tau_{tt},u^\tau_t) + ((u^\tau_t)^2,u^\tau_{tt}) + \dfrac{c^2}{b}((u^\tau_t)^2,u^\tau_t)\right] \nonumber 
	\leqslant (C_1(\varepsilon)\|u^\tau\|^{1/2}_{H^1} ||u^{\tau} ||^{1/2}_{H^2} +\varepsilon C_2 )\|u^\tau_{tt}\|_2^2 + (C_3(\varepsilon)\|u^\tau_t\|_2\|u^\tau_t\|_{H^1}+\varepsilon C_4)\|\p u^\tau_t\|_2^2.
	\end{align} 
	
	 Inequalities \eqref{m3} -- \eqref{m7} holds for all $t \in [0,T]$. After integration of  \eqref{id1} from $0$ to $t \in [0,T]$ one  obtains
\begin{align*}
E_1^\tau(t)+\int_{0}^{t}\bigg[\gamma^\tau-(C_1(\varepsilon)\|u^\tau\|_{H^1}^{1/2} ||u^{\tau}||^{1/2}_{H^2} +\varepsilon C_2 )\bigg]&\|u^\tau_{tt}(\sigma)\|_2^2d\sigma\nonumber\\
&\leqslant E_1^\tau(0)+\int_{0}^{t}\bigg[C_3(\varepsilon)\|u^\tau_t\|_2\|u^\tau_t\|_{H^1}+\varepsilon C_4\bigg]\|\p u^\tau_{t}(\sigma)\|_2^2d\sigma,
\end{align*} which reduces to \eqref{st2}  after noting $f \mapsto \|\p f\|_2$ defines a norm in $H_0^1(\Omega)$ -thus 
 completing Step 2.
%with $C_1(\varepsilon, u^\tau) > 0$ as $(\varepsilon, \|u^\tau\|_{L^\infty}) \to (0,0)$ and $C_2(\varepsilon, u^\tau)$ becomes small as $(\varepsilon, \|\p u^\tau_t\|_2^2) \to (0,0)$. This completes Step 2.
\end{proof}
%Note: Lemma \eqref{EI} is equivalent to say that
%\begin{equation}\label{ei1}
%E_1^\tau(t)+\int_{0}^{t}(\gamma^\tau(t)u_{tt},u_{tt}) =E_1^\tau(0)+ \frac{c^2}{2b^\tau}\int_{0}^{t}(\alpha_t(t)u_t,u_t)+\int_{0}^{t}(f^\tau,u_{tt}+\frac{c^2}{b^\tau}u_t).
%\end{equation}

\begin{proof}[\it  Proof of Step 3:]
	Here the goal is to obtain  recovery of the  {\it integral}  of  the total energy $E^\tau(t) := E_0^\tau(t) + E_1^\tau(t)$. For this we need to construct $\dfrac{d}{dt}E^\tau_1(t)+\dfrac{d}{dt}E^\tau_0(t).$ 
	%xxxxxxxxxxxxxxxxxx
	\ifdefined\xxxxxxxxxx
	 Going back to equation \eqref{m1} (ignoring the scalar $c^2b^{-1}$) and observe that
	\begin{align}\label{m8} (G(u^\tau),u^\tau_t) &= \dfrac{d}{dt}\left[\tau (u^\tau_{tt},u^\tau_t) +\dfrac{1}{2}\|u^\tau_t\|_2^2 + \dfrac{c^2}{2}\|\p u^\tau\|_2^2\right] - \tau \|u^\tau_{tt}\|_2^2 + b\|\p u^\tau_t\|_2^2 \nonumber \\ & =
	\dfrac{d}{dt}\tau(u^\tau_{tt},u^\tau_t)+\dfrac{d}{dt}E_0^\tau(t)-\tau\|u^\tau_{tt}\|_2^2+b\|\p u^\tau_t\|_2^2,
	\end{align} which combined with \eqref{id1} yelds \begin{align}\label{m9} (G(u^\tau),z^\tau_t + u^\tau_t) &= \dfrac{d}{dt}\tau(u^\tau_{tt},u^\tau_t)+\dfrac{d}{dt}E^\tau_1(t) + \dfrac{d}{dt}E_0^\tau(t)+\left(\gamma^\tau-\tau\right)\|u^\tau_{tt}\|_2^2+b\|\p u^\tau_t\|_2^2 \nonumber \\ &= \dfrac{d}{dt}\tau(u^\tau_{tt},u^\tau_t)+\dfrac{d}{dt}E^\tau(t)+\left(\gamma^\tau-\tau\right)\|u^\tau_{tt}\|_2^2+b\|\p u^\tau_t\|_2^, ,\end{align} then, denoting $\overline{z}^\tau := u^\tau_t + \left(\dfrac{c^2}{b} + 1\right)u^\tau$, 
	\fi
	From direct calculations -- tedious but straightforward and reminisicent to \cite{jmgt} -- we arrive at the identity \begin{equation}
	\label{m10} \dfrac{d}{dt}E^\tau + (\gamma^{\tau}-\tau C_1)\|u^\tau_{tt}\|_2^2 + b\|\p u^\tau_t\|_2^2 = (G(u^\tau),{z}^{\tau}_t +u^{\tau}_t) - \tau\dfrac{d}{dt}(u^\tau_{tt},u^\tau_t). \ \ (C_1 > 0)
	\end{equation}  Integrating inn time from $0$ to $t$, an estimate for the last term on the right hand side of (\ref{m10}) yields:
	%Our next step , since \eqref{m10} holds for all $t \in [0,T]$ will be to integrate, in time, from $0$ to $t \in [0,T]$ and estimate the integrals on the right hand side.
	 %By noticing that changing $z$ by $\overline{z}$ does not change the qualitative features of estimate \eqref{m7},
	%It   remains to analyze only the integral of the last term on the right hand side of \eqref{m10}. For this we have: 
	\begin{align}\label{m11}
	-\tau\int_0^t \dfrac{d}{dt}(u^\tau_{tt}(\sigma),u^\tau_t(\sigma)d\sigma = -\tau(u^\tau_{tt}(\sigma),u^\tau_t(\sigma))\biggr\rvert_0^t \nonumber \leqslant \dfrac{\tau}{2}\left[\|u^\tau_{tt}(t)\|_2^2 + \|u^\tau_t(t)\|_2^2 + \|u^\tau_{tt}(0)\|_2^2 + \|u^\tau_t(0)\|_2^2 \right] \nonumber \\ = \dfrac{\tau}{2}\|u^\tau_{tt}(t)\|_2^2 + \dfrac{\tau b}{c^2}\dfrac{c^2}{2b}\|u^\tau_t(t)\|_2^2 +\dfrac{\tau}{2}\|u^\tau_{tt}(0)\|_2^2 + \dfrac{\tau b}{c^2}\dfrac{c^2}{2b}\|u^\tau_t(0)\|_2^2 \leqslant \left(1+\dfrac{\tau b}{c^2}\right)(E_1^\tau(t) + E^\tau_1(0)),
	\end{align} where we have used the expansion \eqref{e11}. Combining  \eqref{m11} with the identity \eqref{id1} (to handle $E^\tau(t)$ from \eqref{m11}) and \eqref{m10} to get (after integrating in time): \begin{align}
	E^\tau(t) + (\gamma^{\tau}-\tau C_1) \int_0^t \|u^\tau_{tt}(\sigma)\|_2^2d\sigma &+  b\int_0^t\|\p u^\tau_t(\sigma)\|_2^2d\sigma \leqslant C_2E^\tau(0) \nonumber + C_3\int_0^t|(G(u^\tau(\sigma)),\overline{z}^\tau_t(\sigma)+z^\tau_t(\sigma))|d\sigma,
	\end{align} (here $\overline{z}^\tau := u^\tau_t + \left(\dfrac{c^2}{b} + 1\right)u^\tau$) and after accounting for  \eqref{m7} we obtain the inequality \begin{align}
	\label{m12}E^\tau(t) &+ \int_0^t(\gamma^{\tau}-\tau C_1 -C_2(\varepsilon)\|u^\tau\|_{L^\infty}-\varepsilon C_3 )\|u^\tau_{tt}(\sigma)\|_2^2d\sigma \nonumber \\ &+ \int_0^t(b-C_4(\varepsilon)\|u^\tau_t\|_2\|u^\tau_t\|_{H^1}-\varepsilon C_5)\|\p u^\tau_{t}(\sigma)\|_2^2d\sigma \leqslant C_6E^\tau(0).
	\end{align}
	
To reconstruct the integral $\displaystyle\int_0^t\|\p u^\tau(\sigma)\|_2^2d\sigma$ we take the $L^2$-inner product of \eqref{nll} with $\lambda u^\tau$ ($\lambda>0$ will be chosen later) which gives 
%\begin{align}
	%\lambda(G(u^\tau),u^\tau) &= \lambda\left[\tau(u^\tau_{ttt},u^\tau) + (u^\tau_{tt},u^\tau) + c^2(Au^\tau,u^\tau) + b(Au^\tau_t,u^\tau)\right] \nonumber \\ &
	%= \lambda\left[\tau\dfrac{d}{dt}(u_{tt},u^\tau) - \tau(u^\tau_{tt},u^\tau_t) + \dfrac{d}{dt}(u^\tau_t,u^\tau) - \|u^\tau_t\|_2^2 + c^2\|\p u^\tau\|_2^2 + \dfrac{b}{2}\dfrac{d}{dt}\|\p u^\tau\|_2^2\right] \nonumber \\ & 
	%=\lambda\left\{\dfrac{d}{dt}\left[\tau(u_{tt},u^\tau) - \dfrac{\tau}{2}\|u^\tau_t\|_2^2 + (u^\tau_t,u^\tau)\right]- \|u^\tau_t\|_2^2 + c^2\|\p u^\tau\|_2^2 + \dfrac{b}{2}\dfrac{d}{dt}\|\p u^\tau\|_2^2\right\}.
	%\end{align} At this point we integrate, in time, from $0$ to $t \in [0,T]$ to obtain 
	\begin{align}\label{m13}
	\dfrac{\lambda b}{2}\|\p u^\tau(t)\|_2^2 &+ \lambda c^2\int_0^t \|\p u^\tau(\sigma)\|_2^2d\sigma = \dfrac{\lambda b}{2}\|\p u^\tau(0)\|_2^2+ \lambda\int_0^t \|u^\tau_t(\sigma)\|_2^2d\sigma \nonumber \\ &- \lambda\left[\tau(u_{tt},u^\tau) - \dfrac{\tau}{2}\|u^\tau_t\|_2^2 + (u^\tau_t,u^\tau)\right]\biggr\rvert_0^t + \lambda\int_0^t(G(u^\tau(\sigma)),u^\tau(\sigma))d\sigma.
	\end{align} Denoting by $I = \lambda\displaystyle\int_0^t \|u^\tau_t(\sigma)\|_2^2d\sigma - \lambda\left[\tau(u_{tt},u^\tau) - \dfrac{\tau}{2}\|u^\tau_t\|_2^2 + (u^\tau_t,u^\tau)\right]\biggr\rvert_0^t$ we estimate \begin{align}\label{m14}
	I
	% & = \lambda\int_0^t \|u^\tau_t(\sigma)\|_2^2d\sigma - \lambda\left[\tau(u_{tt},u^\tau) - \dfrac{\tau}{2}\|u^\tau_t\|_2^2 + (u^\tau_t,u^\tau)\right]\biggr\rvert_0^t \nonumber \\ 
	&\leqslant \lambda\left[C_1\int_0^t \|\p u^\tau_t(\sigma)\|_2^2d\sigma +\dfrac{\tau}{2}\|u^\tau_{tt}(t)\|_2^2 + \dfrac{\tau+1}{2}\|u^\tau(t)\|_2^2 + \dfrac{1}{2}\|u^\tau_t(t)\|_2^2 \right] \nonumber\\ &+\lambda\left[ \dfrac{\tau}{2}\|u^\tau_{tt}(0)\|_2^2 + \dfrac{\tau+1}{2}\|u^\tau(0)\|_2^2 + \dfrac{1}{2}\|u^\tau_t(0)\|_2^2 \right] \leqslant \lambda C_1\int_0^t \|\p u^\tau_t(\sigma)\|_2^2d\sigma + \lambda C_2(E^\tau_1(t) + E^\tau_1(0))
	% \nonumber \\ &\leqslant \lambda C_1\int_0^t \|\p u^\tau_t(\sigma)\|_2^2d\sigma + \lambda C_2\int_0^t (G(u^\tau(\sigma)),z^\tau_t(\sigma))d\sigma + \lambda C_3E^\tau_1(0),
	\end{align} which combined with \eqref{m13} gives \begin{align}\label{m15}
	\dfrac{\lambda b}{2}\|\p u^\tau(t)\|_2^2 &+ \lambda c^2\int_0^t \|\p u^\tau(\sigma)\|_2^2d\sigma \leqslant \lambda  C_1[E^\tau(0) +E_1^{\tau} (t) ]  \nonumber \\ &+ \lambda C_2\int_0^t \|\p u^\tau_t(\sigma)\|_2^2d\sigma + \lambda C_3 |\int_0^t (G(u^\tau(\sigma)),u^\tau(\sigma))d\sigma |
	\end{align}
	 It  remains to estimate the nonlinear term $(G(u^\tau), u^\tau)$. For this we have \begin{align}
	\label{m16} (G(u^\tau),u^\tau) &=2k (u^\tau u^\tau_{tt} + (u^\tau_t)^2,u^\tau) \nonumber \\ &\leqslant \varepsilon C_1\|u^\tau\|_2^2 + C_2(\varepsilon)\|u^\tau\|_{L^\infty}^2\|u^\tau_{tt}\|_2^2 + C_3(\varepsilon)\|u^\tau_t\|_2 \|u^\tau_t\|_{H^1}^3 \nonumber \\ & \leqslant \varepsilon C_1\|\p u^\tau\|_2^2 + C_2(\varepsilon)\|u^\tau\|_{L^\infty}^2\|u^\tau_{tt}\|_2^2 + C_3(\varepsilon)\|u^\tau_t\|_2 \|u^\tau_t\|_{H^1}\|\p u^\tau_t\|_2^2,
	\end{align} which holds for all $t \in [0,T].$  Combining  the result of Step 2 and \eqref{m16}  leads to  \begin{align}\label{m17}
	\dfrac{\lambda b}{2}\|\p u^\tau(t)\|_2^2 &+ \lambda (c^2-\varepsilon C_1)\int_0^t \|\p u^\tau(\sigma)\|_2^2d\sigma \leqslant C_2E^\tau(0)\nonumber \\ &+ \lambda\int_0^t(C_3(\varepsilon) \|u^\tau\|_{L^\infty} + C_4(\varepsilon)\|u^\tau\|_{L^\infty}^2 + \varepsilon C_5 )  \|u^\tau_{tt}(\sigma)\|_2^2 d\sigma \nonumber \\ & + \lambda\int_0^t(C_6(\varepsilon)\|u^\tau_t\|_2\|u^\tau_t\|_{H^1} + \varepsilon C_7 + C_8)\|\p u^\tau_t(\sigma)\|_2^2d\sigma.
	\end{align} Then, combining \eqref{m12} with \eqref{m17} 
	%we have \begin{align}\label{m18}
	%C_1(\lambda)E^\tau(t) &+ \int_0^t(1-\tau C_2 - C_3(\varepsilon,\lambda)\|u^\tau\|_{L^\infty} - \varepsilon C_4(\lambda)) \|u^\tau_{tt}(\sigma)\|_2^2 d\sigma \nonumber \\ &+ \int_0^t(b-C_5(\varepsilon,\lambda)\|u^\tau_t\|_2\|u^\tau_t\|_{H^1}-\varepsilon C_6(\lambda)-\lambda C_8)\|\p u^\tau_{t}(\sigma)\|_2^2d\sigma \nonumber \\ &+ \lambda (c^2-\varepsilon C_9)\int_0^t \|\p u^\tau(\sigma)\|_2^2d\sigma \leqslant C_{10}E^\tau(0),
	%\end{align} 
	and  picking $0 < \lambda << \dfrac{b}{2}$, $\tau_0$ and $\varepsilon$ sufficiently small (and fixed) we obtain 
	% can rewrite \eqref{m18} as 
	 \begin{align}\label{m19}
	E^\tau(t) + \int_0^t(\gamma^{\tau}  -C_2\|u^\tau\|_{L^\infty}) \|u^\tau_{tt}(\sigma)\|_2^2 d\sigma  &+ \int_0^t(C_3b -C_4\|u^\tau_t\|_2\|u^\tau_t\|_{H^1})\|\p u^\tau_{t}(\sigma)\|_2^2d\sigma \nonumber \\ &+C_5 c^2 \int_0^t \|\p u^\tau(\sigma)\|_2^2d\sigma \leqslant C_{6}E^\tau(0).
	\end{align} 
	Applying once more interpolation inequality 
	% \begin{equation}\label{intineq}
	$\|u^\tau\|_{L^\infty} \leqslant C\|u^\tau\|_{H^2}^{1/2} \|u^\tau\|_{H^1}^{1/2}$
	%\end{equation} 
	%which will hold for our solution $u^\tau \in H^2(\Omega)$ (see Remark \ref{interp}). 
	 leads to 
	\ifdefined\xxxxxxx
	With this in hand, we rewrite \eqref{m19} as \begin{align*}
	E^\tau(t) &+ C_1\int_0^t E^\tau(\sigma)d\sigma - C_2\int_0^t\|u^\tau\|_{H^2}^{1/2} \|u^\tau\|_{H^1}^{1/2} \|u^\tau_{tt}(\sigma)\|_2^2 d\sigma \nonumber \\ &-C_3\int_0^t\|u^\tau_t\|_2\|u^\tau_t\|_{H^1}\|\p u^\tau_{t}(\sigma)\|_2^2d\sigma \leqslant C_{4}E^\tau(0)
	\end{align*} and estimating above by $E^\tau(t)$ we arrive at \begin{align*}
	E^\tau(t) &+ C_1\int_0^t E^\tau(\sigma)d\sigma - C_2\int_0^t \|Au^\tau(\sigma)\|_{2}^{1/2} [E^\tau(\sigma)]^{5/4} d\sigma \nonumber \\ &-C_3\int_0^t[E^\tau(\sigma)]^{2}d\sigma \leqslant C_{4}E^\tau(0)
	\end{align*} or further
	\fi 
	 \begin{align}\label{m20}
	E^\tau(t) &+ \int_0^t E^\tau(\sigma)[\gamma^{*} - C_2\|Au^\tau(\sigma)\|_2^{1/2}[E^\tau(\sigma)]^{1/4} - C_3E^\tau(\sigma)]d\sigma \leqslant C_{4}E^\tau(0),
	\end{align} and here all the constants are positive and independent of $\tau$ and on time and $\gamma^* =\min \{ \gamma^{\tau} ,  Cb,C c^2 \} $ where $C>0$ is a suitable generic constant. This completes the proof of Step 3.
\end{proof}
%XXXXXXXXXX
\ifdefined\xxxxxxxxx
\begin{rmk}
	\label{interp} The inequality \eqref{intineq} was used here as a special case of Theorem 5.9 (see \cite{adam}, p. 141 and 142), where the general inequality is given as \begin{equation}\label{adin}\|u\|_{L^\infty} \leqslant C \|u\|_{W^{m,p}}^\theta \|u\|_{L^q}^{1-\theta},\end{equation} where either $1 \leqslant q \leqslant p$ or both $q>p$ and $p(m-1) < n$ and $\theta$ (which depends on $n,p,q$ and $m$) is given by $\theta = \dfrac{np}{np+(mp-n)q}.$
	
	Notice that, algebraically speaking, our case fits the setting $m = 2, p=2, q = 2$ and $n = 2$ or $3.$ This would imply $\theta = \dfrac{n}{4}$ and the inequality \eqref{adin} would read $$\|u\|_{L^\infty} \leqslant C \|u\|_{H^2}^{n/4} \|u\|_{2}^{(4-n)/4} \leqslant C\|u\|_{H^2}^{n/4} \|u\|_{H^1}^{(4-n)/4}$$ which is exactly \eqref{intineq} for $n =2$, however, the case $n = 3$ \emph{only} works because $u^\tau$ is a solution of our nonlinear problem and, therefore, exists in a small ball which we assume to have radius less than one ($\rho < 1$). Then we can take $\theta = 1/2$ for both cases $n=2$ and $n=3$.
\end{rmk}
\fi
%X 
\begin{proof}[\it Proof of Step 4:] Our objective in this step is to establish a stabilizabity inequality for the energy $\E^\tau.$  Since 
$\E^\tau(t) \approx E^\tau(t) + \|Au^\tau(t)\|_2^2$,  it suffices to focus on the higher order term $\|Au^\tau(t)\|_2^2.$ This quantity can be estimated using the multiplier $Au^\tau \in L^2(\Omega)$. Taking the $L^2$-inner product of \eqref{nll} with $\lambda Au^\tau$ $(\lambda > 0$ to be later determined) we have, \begin{align*}
	\lambda(G(u^\tau),Au^\tau) 
	%\lambda \left[\tau(u^\tau_{ttt},Au^\tau)+(u^\tau_{tt},Au^\tau)+c^2(Au^\tau,Au^\tau)+b(Au^\tau_{t},Au^\tau)\right] \nonumber \\ &= \lambda \left[\tau \dfrac{d}{dt}(u^\tau_{tt},Au^\tau) - \tau(u^\tau_{tt},Au^\tau_t) + \dfrac{d}{dt}(u^\tau_{t},Au^\tau) - (u^\tau_t,Au^\tau_t) + c^2 \|Au^\tau\|_2^2 + \dfrac{b}{2}\dfrac{d}{dt}\|Au^\tau\|_2^2\right] \nonumber \\ &= \lambda \left[\tau \dfrac{d}{dt}(u^\tau_{tt},Au^\tau) - \dfrac{\tau}{2}\dfrac{d}{dt}\|\p u^\tau_t\|_2^2 + \dfrac{d}{dt}(u^\tau_{t},Au^\tau) - \|\p u^\tau_t\|_2^2 + c^2 \|Au^\tau\|_2^2 + \dfrac{b}{2}\dfrac{d}{dt}\|Au^\tau\|_2^2\right] 
	 = \lambda \left\{\dfrac{d}{dt}\left[\tau (u^\tau_{tt},Au^\tau) - \dfrac{\tau}{2}\|\p u^\tau_t\|_2^2 + (u^\tau_{t},Au^\tau)\right]- \|\p u^\tau_t\|_2^2 + \dfrac{b}{2}\dfrac{d}{dt}\|Au^\tau\|_2^2 + c^2 \|Au^\tau\|_2^2 \right\},
	\end{align*} then, integration in time from $0$ to $t \in [0,T]$ gives \begin{align}
	\label{m22} \dfrac{\lambda b}{2}\|Au^\tau\|_2^2 &+ \lambda c^2 \int_0^t\|Au^\tau(\sigma)\|_2^2d\sigma = \dfrac{\lambda b}{2}\|Au^\tau(0)\|_2^2 + \lambda \int_0^t \|\p u^\tau_t(\sigma)\|_2^2 d\sigma  \nonumber \\&-\lambda \left[\tau (u^\tau_{tt},Au^\tau) - \dfrac{\tau}{2}\|\p u^\tau_t\|_2^2 + (u^\tau_{t},Au^\tau)\right]\biggr\rvert_0^t + \lambda \int_0^t (G(u^\tau(\sigma)),Au^\tau(\sigma))d\sigma,
	\end{align} Denoting  $I = \left[\tau (u^\tau_{tt},Au^\tau) - \dfrac{\tau}{2}\|\p u^\tau_t\|_2^2 + (u^\tau_{t},Au^\tau)\right]\biggr\rvert_0^t $ we have \begin{align}
	I 
	%&= \left[\tau (u^\tau_{tt},Au^\tau) - \dfrac{\tau}{2}\|\p u^\tau_t\|_2^2 + (u^\tau_{t},Au^\tau)\right]\biggr\rvert_0^t \nonumber \\ 
	&\leqslant \dfrac{\tau}{2}\|u^\tau_{tt}(t)\|_2^2 + \dfrac{\tau }{2}\|Au^\tau(t)\|_2^2 + \dfrac{\tau+1}{2}\|\p u^\tau_t(t)\|_2^2 + \dfrac{1}{2}\|\p u^\tau(t)\|_2^2 \nonumber \\ &+ \dfrac{\tau}{2}\|u^\tau_{tt}(0)\|_2^2 + \dfrac{\tau }{2}\|Au^\tau(0)\|_2^2 + \dfrac{\tau+1}{2}\|\p u^\tau_t(0)\|_2^2 + \dfrac{1}{2}\|\p u^\tau(0)\|_2^2 \nonumber \leqslant C_1\E^\tau(0) + \dfrac{\tau}{2}\|Au^\tau(t)\|_2^2 +  C_2E^{\tau} (t) 
	\end{align} 
	%where we have used the fact that $E^\tau(t)$ is bounded above by a multiple of $E^\tau(0)$  [for small values of $E(0) $ ]  (see \eqref{m20}). 
	Then from \eqref{m22} it follows that \begin{align}
	\label{m23} \dfrac{\lambda (b-\tau_0)}{2}\|Au^\tau (t) \|_2^2 &+ \lambda c^2 \int_0^t\|Au^\tau(\sigma)\|_2^2d\sigma \leqslant C_1(\lambda)\E^\tau(0)+  C_2\lambda 
	E^{\tau}(t) \nonumber \\& + \lambda C_3\int_0^t \|\p u^\tau_t(\sigma)\|_2^2 d\sigma + \lambda \int_0^t (G(u^\tau(\sigma)),Au^\tau(\sigma))d\sigma.
	\end{align} For  the nonlinear part we have \begin{align}\label{m24}
	(G(u^\tau),Au^\tau) &\leqslant \varepsilon C_1\|Au^\tau\|_2^2 + C_2(\varepsilon)\|u^\tau\|_{L^\infty}^2\|u^\tau_{tt}\|_2^2 + C_3(\varepsilon)\|u^\tau_t\|_2 \|u^\tau_t\|_{H^1}^3 \nonumber \\ & \leqslant \varepsilon C_1\|A u^\tau\|_2^2 + C_2(\varepsilon)\|u^\tau\|_{L^\infty}^2\|u^\tau_{tt}\|_2^2 + C_3(\varepsilon)\|u^\tau_t\|_2 \|u^\tau_t\|_{H^1}\|\p u^\tau_t\|_2^2 
	\end{align} then a combination of \eqref{m23}, \eqref{m24} and \eqref{m19} gives \begin{align}\label{m25}
	E^\tau(t) + \dfrac{\lambda b}{4} ||Au^{\tau}(t)||^2_2  &+ \lambda(c^2  -\varepsilon C_1)\int_0^t \|Au^\tau(\sigma)\|_2^2d\sigma \nonumber \\ & +\int_0^t(C_2-(C_3+\lambda C_4(\varepsilon))\|u^\tau\|_{L^\infty}) \|u^\tau_{tt}(\sigma)\|_2^2 d\sigma \nonumber \\ &+ \int_0^t(C_5-(C_6+\lambda C_7(\varepsilon))\|u^\tau_t\|_2\|u^\tau_t\|_{H^1}-\lambda C_8))\|\p u^\tau_{t}(\sigma)\|_2^2d\sigma \nonumber \\ &+ C_9\int_0^t \|\p u^\tau(\sigma)\|_2^2d\sigma \leqslant C_{10}E^\tau(0) + C_{11}\lambda E^{\tau}(t) 
	\end{align} then, first fixing $\varepsilon > 0$ small and then fixing $\lambda << C_5/C_8$ it follows that \begin{align}\label{m26}
	\E^\tau(t) &+ \int_0^t(\gamma^\tau-C_1\|u^\tau\|_{L^\infty}) \|u^\tau_{tt}(\sigma)\|_2^2 d\sigma + \int_0^t(C_2-C_3\|u^\tau_t\|_2\|u^\tau_t\|_{H^1})\|\p u^\tau_{t}(\sigma)\|_2^2d\sigma \nonumber \\ &+ C_4\int_0^t \|\p u^\tau(\sigma)\|_2^2d\sigma + C_5\int_0^t\|Au^\tau(\sigma)\|_2^2 \leqslant C_{6}\E^\tau(0),
	\end{align} which reduces to   
	\ifdefined\xxxx
	 \begin{align}\label{m27}
	\E^\tau(t) &+ C_1 \int_0^t \E^\tau(\sigma)d\sigma -C_2\int_0^t\|u^\tau\|_{L^\infty} \|u^\tau_{tt}(\sigma)\|_2^2 d\sigma -C_3 \int_0^t\|u^\tau_t\|_2\|u^\tau_t\|_{H^1}\|\p u^\tau_{t}(\sigma)\|_2^2d\sigma \leqslant C_{4}\E^\tau(0),
	\end{align} and further, by \eqref{intineq} we have 
	\fi
	%\begin{align*}
	%\E^\tau(t) &+ \gamma^* \int_0^t \E^\tau(\sigma)d\sigma - C_2\int_0^t [\E^\tau(\sigma)]^{5/4} [E^\tau(\sigma)]^{1/4} d\sigma \nonumber \\ &-C_3\int_0^tE^\tau(\sigma)\E^\tau(\sigma)d\sigma \leqslant C_{4}E^\tau(0)
	%\end{align*} or further
	 \begin{align}\label{m28}
	\E^\tau(t) &+ \int_0^t \E^\tau(\sigma)[\gamma^* - C_1[\E^\tau(\sigma)]^{1/4}[E^\tau(\sigma)]^{1/4} - C_2E^\tau(\sigma)]d\sigma \leqslant C_{3}\E^\tau(0),
	\end{align} with  all the constants being positive and independent of $\tau$ and on time. This completes the proof of Step 4.\end{proof}

\ifdefined\xxxxxx
\begin{proof}[\bf Proof of Step 5:]
	Observe now that our assumption $\E^\tau(0) \leqslant r_1$ with $r_1$ a fixed positive number and $E^\tau(0) < \rho$, $\rho >0$ small gives -- via \eqref{m28} along with a slighly modified Barrier's Method argument -- global boundedness of $\E^\tau(t)$ and global smallness of $E^\tau(t)$. The argument is sketched as follows: with $t = 0$ and fixed $r_1$ we pick $\rho$ so the coefficient of $\E^\tau(\sigma)$ inside the integral in -- both -- \eqref{m28} and \eqref{m20} is such that $$C_1 - C_2[\E^\tau(0)]^{1/4}[E^\tau(0)]^{1/4} - C_3E^\tau(0) > C_1 - C_2r_1^{1/4}\rho^{1/4} - C_3\rho > \dfrac{C_1}{2}.$$ When we let nature take its course, however, our bounds would be assured as long as we have the positivity $C_1 - C_2[\E^\tau(\sigma)]^{1/4}[E^\tau(\sigma)]^{1/4} - C_3E^\tau(\sigma) > 0$ for all $0 < \sigma \leqslant t.$ Since we pick $\rho$ small so it \emph{starts} positive and both $t \mapsto \E^\tau(t)$ and $t \mapsto E^\tau(t)$ are continuous, we just need to guarantee that there is no $T > 0$ such that $C_1 - C_2[\E^\tau(T)]^{1/4}[E^\tau(T)]^{1/4} - C_3E^\tau(T) = 0.$ Suppose, by contradition, that this is not the case and let's consider the smallest such $T$. In that case, it follows from \eqref{m28} and \eqref{m20} that $C_1-C_2[\E^\tau(\sigma)]^{1/4}[E^\tau(\sigma)]^{1/4} + C_3E^\tau(\sigma)$ can be made greater than or equals to $C_1/2$ for all $0 \leqslant \sigma < T$ in a manner which is not dependent of $T$, which would imply that the function $t \mapsto C_1-C_2[\E^\tau(t)]^{1/4}[E^\tau(t)]^{1/4} + C_3E^\tau(t)$ has a jump discontinuity at $t = T$, which is a contradiction. Then, there exists $C$ such that $$E^\tau(t) \leqslant C\rho \ \ \mbox{and} \ \ \E^\tau(t) \leqslant Cr_1$$ uniformly in $t \in (0,\infty)$ and $\tau > 0.$ Therefore, the inequality \begin{align}\label{m29}
	 \E^\tau(t) &+ C_1\int_0^t \E^\tau(\sigma)d\sigma \leqslant C_2\E^\tau(0),
	 \end{align} holds for all $t \in (0,\infty).$
	 \fi
	 \begin{rmk}\label{imp}
	 	Note that we also obtain the estimate 
	 	\begin{equation}\label{lower} 
	 	E^{\tau} (t) + C_1(r_1,\rho_1)\int_0^t E^{\tau}(s) ds \leq C_2(r_1,\rho_1)  E^{\tau}(0) 
	 	\end{equation}
	 	for all solutions $u^{\tau} $ such that $\E^\tau(0) \leq r_1 $ and  $E^\tau(0) \leq \rho_1(r_1)  $ with $\rho_1$ sufficiently small. 
	 	This is to say that under the smallnes condition imposed on the lowest energy $E^\tau(0)$ -- which depends on the bound of higher energy $\E^\tau(0)$ -- the lower energy of the solutions -- as a trajectory -- remains bounded  by a multiple of $E^\tau(0)$ for all times. This fact will be used later several times. \end{rmk}

\section{Uniform (in $\tau$) exponential decay in $\Hc^\tau.$ -the proof of Theorem \ref{thm2}} \label{unhigh}
%\section{Uniform (in $\tau$) exponential decay in high topology $\Hc^\tau.$} \label{unhigh}

With an eye on higher topology of $\Hc$ space, we shall repeat the procedure of the previous section. 
We will now show that the solution for \eqref{eqnl}  is uniform (in $\tau$) exponentially stable in the topology of $\Hc^\tau$ -- recall that $\Hc = \D{A} \times \D{A} \times \D{\p}$ -- under the   smallness condition in $\mathbb{H}_0^{\tau}  $ only.  For $\tau > 0$, let $u^\tau$ be such solution and fix $r_2 > 0$ such that $\|U_0\|_{\Hc} \leqslant r_2$ and let $\|U_0\|_{\Ha^{\tau}} \leqslant \rho_2$, $\rho =\rho(r_2) $ sufficiently small  will be determined in the course of the proof.  Again, for the energy functional $\He^\tau$ defined as $\He^\tau(t) \approx \E^\tau(t) + \|Au^\tau_t\|_2^2 + \tau \|\p u^\tau_{tt}\|_2^2$, we seek to establish the general stabilizability inequality \begin{equation}\label{sth2}\He^\tau(t) + C_0  \int_0^t ||A^{1/2} u_{tt}||_2^2  d\sigma  + C_1\int_0^t \He^\tau(\sigma)d\sigma \leqslant C_2\He^\tau(0),\end{equation} with $C_0,C_1, C_2 > 0$ and independent of $\tau$ but they depend on $r_2$ and $\rho_2$.  To accomplish this 
one needs to  account for  higher order terms $\|Au^\tau_t\|_2^2$ and $\|A^{1/2}u_{tt}^{\tau}\|$. 
The procedure is similar as in the previous case  (Steps 1-5) of $\mathbb{H}_1$ topology, so we shall concentrate only on the details which are different and require adititonal care. 
%.. \approx \E^\tau(t) + \|Au^\tau_t(t)\|_2^2 + \tau \|\p u^\tau_{tt}\|_2^2$.%herefore all the work done in the previous sections will be used in here through the contribution of $\E^\tau$ to the energy.
 Recall that we are working with  sufficiently smooth solutions guaranteed by the wellposedness and regularity theory, so our computations ahead can be rigorously justified. 

%Although shorter, it is worthy to sketch the steps we will follow to achieve \eqref{sth2}. 
\ifdefined\xxxxx
\begin{itemize}
	\item[\bf Step 1:] We use the multipliers (or some multiple of) $Au^\tau_t$ and $Au^\tau_{tt}$ in order to reconstruct the high order terms $\|Au^\tau_t\|_2^2$ and $\|Au^\tau_t\|$

	\item[\bf Step 2:]
	
	\item[\bf Step 3:]
\end{itemize}
\fi

\begin{proof}[\bf Proof of (\ref{sth2}):] We take the $L^2$-inner product of \eqref{nll} with $Au^\tau_t(t) \in L^2(\Omega)$ for all $t \in [0,T]$. Recalling that $G(u^\tau) = 2k(u^\tau u^\tau_{tt} + (u^\tau_t)^2)$ we obtain, 
%\begin{align}
	%(G(u^\tau),Au^\tau_t) &= (\tau u^\tau_{ttt},Au^\tau_{t})+(u^\tau_{tt},Au^\tau_{t})+c^2(Au^\tau,Au^\tau_{t})+b(Au^\tau_{t},Au^\tau_{t})
	% \nonumber \\ &= \tau \dfrac{d}{dt}(u^\tau_{tt},Au^\tau_t) - \tau\|\p u^\tau_{tt}\|_2^2 + \dfrac{1}{2}\dfrac{d}{dt}\|\p u^\tau_t\|_2^2 + \dfrac{c^2}{2}\dfrac{d}{dt}\|Au^\tau\| + b\|Au^\tau_t\|_2^2
	% \nonumber \\ &= \dfrac{d}{dt} \left[\tau(u^\tau_{tt},Au^\tau_t)+\dfrac{1}{2}\|\p u^\tau_t\|_2^2+ \dfrac{c^2}{2}\|Au^\tau\|\right] - \tau\|\p u^\tau_{tt}\|_2^2 + b\|Au^\tau_t\|_2^2, \label{m30}
	%\end{align} which we integrate in time from $0$ to $t \in [0,T]$ -- and keep the energy components on the left hand side -- to obtain 	
	\begin{align}\label{m31}
	\dfrac{c^2}{2}\|Au^\tau\|_2^2&+b\int_{0}^{t}\|Au^\tau_{t}(\sigma)\|_2^2d\sigma-\tau\int_{0}^{t}\|\p u^\tau_{tt}(\sigma)\|^2d\sigma\nonumber\\
	&=\dfrac{c^2}{2}\|Au^\tau(0)\|^2-\left[\tau(u^\tau_{tt},Au^\tau_{t})+\dfrac{1}{2}\|\p u^\tau_t\|_2^2\right]\biggr\rvert_0^t+\int_{0}^{t}(G(u^\tau(\sigma)),Au^\tau_t(\sigma))d\sigma.
	\end{align}

The next step is to take the $L^2$-inner product of \eqref{nll} with $\lambda Au^\tau_{tt}(t) \in L^2(\Omega)$ for all $t \in [0,T]$ (smooth solutions) where $\lambda>0$ will be determined later. This leads to, 
%\begin{align}
%\lambda (G(u^\tau),Au^\tau_{tt}) &= \lambda\left[(\tau u^\tau_{ttt},Au^\tau_{tt})+(u^\tau_{tt},Au^\tau_{tt})+c^2(Au^\tau,Au^\tau_{tt})+b(Au^\tau_{t},Au^\tau_{tt}) \right]\nonumber \\ &=
% \lambda\left[\dfrac{\tau}{2} \dfrac{d}{dt}\|\p u^\tau_{tt}\|_2^2 + \|\p u^\tau_{tt}\|_2^2 + c^2\dfrac{d}{dt}(Au^\tau,Au^\tau_t) - c^2\|\p u^\tau_t\|_2^2 + \dfrac{b}{2}\dfrac{d}{dt}\|Au^\tau_t\|_2^2\right] \nonumber \\ &=
 %\lambda\left\{\dfrac{d}{dt} \left[\dfrac{\tau}{2}\|\p u^\tau_{tt}\|_2^2+c^2(Au^\tau,Au^\tau_t)+\dfrac{b}{2}\|Au^\tau_t\|_2^2\right] +\|\p u^\tau_{tt}\|_2^2 -c^2\|Au^\tau_t\|_2^2\right\}, \label{m32}
%\end{align} 
%which we integrate in time from $0$ to $t \in [0,T]$ -- and keep the energy components on the left hand side -- to obtain 
	\begin{align}\label{m33}\dfrac{\lambda\tau}{2}\|\p u^\tau_{tt}\|_2^2&+\dfrac{\lambda b}{2}\|Au^\tau_t\|_2^2
-\lambda c^2\int_0^t\|A u^\tau_t(\sigma)\|_2^2d\sigma + \lambda\int_0^t\|\p u^\tau_{tt}\|_2^2d\sigma = \dfrac{\lambda\tau}{2}\|\p u^\tau_{tt}(0)\|_2^2\nonumber\\
&+\dfrac{\lambda b}{2}\|Au^\tau_t(0)\|_2^2 -\lambda c^2\left[(Au^\tau,Au^\tau_t)\right]\biggr\rvert_0^t+ \lambda\int_0^t(G(u^\tau(\sigma)),Au^\tau_{tt}(\sigma))d\sigma
\end{align} and then adding $\eqref{m31}$ with $\eqref{m33}$ we arrive at \begin{align}\label{m34}
\dfrac{c^2}{2}\|Au^\tau\|_2^2&+\dfrac{\lambda\tau}{2}\|\p u^\tau_{tt}\|_2^2+\dfrac{\lambda b}{2}\|Au^\tau_t\|_2^2+(b-\lambda c^2) \int_{0}^{t}\|Au^\tau_{t}(\sigma)\|_2^2d\sigma \nonumber \\ &+(\lambda-\tau)\int_{0}^{t}\|\p u^\tau_{tt}(\sigma)\|^2d\sigma=\dfrac{c^2}{2}\|Au^\tau(0)\|^2+\dfrac{\lambda\tau}{2}\|\p u^\tau_{tt}(0)\|_2^2+\dfrac{\lambda b}{2}\|Au^\tau_t(0)\|_2^2\nonumber \\ &-\left[\tau(u^\tau_{tt},Au^\tau_{t})+\dfrac{1}{2}\|\p u^\tau_t\|_2^2+\lambda c^2 (Au^\tau,Au^\tau_t)\right]\biggr\rvert_0^t+\int_{0}^{t}(G(u^\tau(\sigma)),Au^\tau_t(\sigma)+\lambda Au^\tau_{tt}(\sigma))d\sigma.
\end{align}

Now, letting $I = \left[\tau(u^\tau_{tt},Au^\tau_{t})+\dfrac{1}{2}\|\p u^\tau_t\|_2^2+\lambda c^2 (Au^\tau,Au^\tau_t)\right]\biggr\rvert_0^t$ we estimate \begin{align}\label{m35}
I
%\left[\tau(u^\tau_{tt},Au^\tau_{t})+\dfrac{1}{2}\|\p u^\tau_t\|_2^2+\lambda c^2 (Au^\tau,Au^\tau_t)\right]\biggr\rvert_0^t \nonumber
  &\leqslant C_1(\lambda)\He^\tau(0) + \dfrac{\tau}{\lambda b}\|u^\tau_{tt}(t)\|_2^2+\dfrac{(\tau +1)\lambda b}{4}\|Au^\tau_t(t)\|_2^2 +\dfrac{1}{2}\|A^{1/2}u^\tau_t(t)\|_2^2 + \dfrac{\lambda c^4}{b}\|Au^\tau(t)\|_2^2 \nonumber  \\ &\leqslant C_1(\lambda)\He^\tau(0) + C_2(\lambda)\E^\tau(0) +\dfrac{(\tau +1)\lambda b}{4}\|Au^\tau_t(t)\|_2^2 \leqslant C_1(\lambda)\He^\tau(0) +\dfrac{(\tau +1)\lambda b}{4}\|Au^\tau_t(t)\|_2^2,
\end{align} For the nonlinear terms we have :   for all $\varepsilon > 0$ there exists constants $C_i(\epsilon) , i =1,2,3 $  such that  \begin{align}
\label{m36} (G(u^\tau),Au^\tau_t) &= (u^\tau u^\tau_{tt} + (u^\tau_t)^2,Au^\tau_t) \nonumber \\& \leqslant \varepsilon C_1 \|Au^\tau_t\|_2^2 + C_2(\varepsilon)\|u^\tau\|_{L^\infty}^2\|\p u^\tau_{tt}\|_2^2 + C_3(\varepsilon)\|u_t^\tau\|_2\|u^\tau_t\|_{H^1}\|A u^\tau_t\|_2^2,
\end{align} and  by  Poincaré's inequality, Sobolev's embeddings, interpolation inequalities and product rule   we obtain \begin{align}
\label{m37} (G(u^\tau),Au^\tau_{tt}) &= (u^\tau u^\tau_{tt} + (u^\tau_t)^2,Au^\tau_{tt}) = \left(A^{1/2}(u^\tau u^\tau_{tt} + (u^\tau_t)^2), \p u^{\tau}_{tt}\right) 
 \leq   ||  \nabla(u^\tau_{tt} u^\tau + (u_t^{\tau})^2 )||_2 || \p u^\tau_{tt}\||_2 \nonumber \\& \leqslant C_1(\varepsilon)\|u^\tau_{tt}\|_{L^4}^2\|\p u^\tau\|_{L^4}^2 + (\varepsilon + \|u^\tau\|_{L^\infty})\|\p u^\tau_{tt}\|_2^2 + C_2(\varepsilon)\|u^\tau\|_{L^4}^2\|\p u^\tau_t\|_{L^4}^2 \nonumber \\& \leqslant C_1(\varepsilon)\|\p u^\tau_{tt}\|_{2}^2\|\p u^\tau\|_2^{1/2}\|Au^\tau\|_{2}^{3/2} + (\varepsilon + \|u^\tau\|_{L^\infty})\|\p u^\tau_{tt}\|_2^2 + C_2(\varepsilon)\|\p u^\tau\|_{2}^2\|A u^\tau_t\|_{2}^2 \nonumber \\ & \leqslant \left(\varepsilon + \|u^\tau\|_{L^\infty} + C_1(\varepsilon)\|\p u^\tau\|_2^{1/2}\|Au^\tau\|_{2}^{3/2}\right)\|\p u^\tau_{tt}\|_2^2 + C_2(\varepsilon)\|\p u^\tau\|_{2}^2\|A u^\tau_t\|_{2}^2 
\end{align}  We then combine \eqref{m34}, \eqref{m35}, \eqref{m36} and \eqref{m37} we get \begin{align}\label{m38}
\dfrac{c^2}{2}\|Au^\tau\|_2^2&+\dfrac{\lambda\tau}{2}\|\p u^\tau_{tt}\|_2^2+\dfrac{\lambda(1-\tau)}{4}\|Au^\tau_t\|_2^2+\int_{0}^{t}(b-\lambda C_1-\varepsilon \lambda C_2)\|Au^\tau_{t}(\sigma)\|_2^2d\sigma \nonumber \\ &+\int_{0}^{t}(\lambda-\tau-\varepsilon \lambda C_3)\|\p u^\tau_{tt}(\sigma)\|^2d\sigma \leqslant C_4(\lambda)\He^\tau(0) \nonumber \\ &+ \lambda\int_0^t C_5(\varepsilon)\left(\|u^\tau\|_{L^\infty}^2+\|\p u^\tau\|_2^{1/2}\|Au^\tau\|_{2}^{3/2}\right)\|\p u^\tau_{tt}(\sigma)\|_2^2d\sigma + \lambda \int_0^tC_6(\varepsilon)\|u_t^\tau\|_2\|u^\tau_t\|_{H^1}\|A u^\tau_t(\sigma)\|_2^2d\sigma,
\end{align} and by making $\tau_0, \lambda < \dfrac{b}{2C_1} $ and $\varepsilon$ small enough, using the interpolation inequality, and taking into account inequality  \eqref{m19} we arrive at \begin{align*}
\He^\tau(t) &+   \int_0^t \frac{1}{8}  b ||A^{1/2}u_{tt}|(\sigma ) ||^2 d \sigma  +  \gamma^{**} \int_0^t \He^\tau(\sigma)d\sigma - C_1\int_0^t \|u^\tau\|_{H^2}^{1/2}\|u^\tau\|_{H^1}^{1/2}\|\p u^\tau_{tt}(\sigma)\|_2^2d\sigma \nonumber \\&- C_2\int_0^t\|u_t^\tau\|_2\|u^\tau_t\|_{H^1}\|A u^\tau_t(\sigma)\|_2^2d\sigma \leqslant C_3\He^\tau(0)
\end{align*} which can be rewritten in terms of the energy functionals  as:
% \begin{align*}
%\He^\tau(t) &+ \int_0^t ||A^{1/2} u_{tt}||^2 [ 1/8 b -  [E^{\tau}(\sigma) ]^{1/2}[\E^{\tau}(\sigma)]^{1/2} ] + \gamma^{**} \int_0^t \He^\tau(\sigma)d\sigma - C_2\int_0^t [E^\tau(\sigma)]^{1/2}[\He^\tau(\sigma)]^{3/2}d\sigma  \leqslant C_3\He^\tau(0)
%\end{align*} or 
\begin{align}
\He^\tau(t) &+  \int_0^t ||A^{1/2} u_{tt} (\sigma) ||^2\left[\frac{1}{8} b -[E^{\tau}(\sigma) ]^{1/4}[\E^{\tau}(\sigma)]^{1/4}\right]   +\int_0^t \He^\tau(\sigma)(\gamma^{**} - C_1[E^\tau(\sigma)]^{1/2}[\E^\tau(\sigma)]^{1/2})d\sigma \leqslant C_2\He^\tau(0), \label{h2in}
\end{align}
where $\gamma^{**} = \min \{ \gamma^{*}, b\} $.  Barrier's Method  applied  to the last inequality  asserts global boundedness   of $\He^\tau$ by a multiple of $r_2$, in a similar manner as in the previous section. This  leads to  the final estimate \eqref{sth2}. The proof is complete. \end{proof}
\begin{rmk}
 Notice that the parameters responsible for $\mathbb{H}_i$ stability estimates  are $\gamma^\tau > 0 $, $b >0, c^2 > 0 $. 
% while for $\mathbb{H}_2$ estimates  the  strict positivity of diffusivity $b$ 
%plays critical role.  We also recall that the latter is critical for the  existence of linear semigroup. 
\end{rmk}

%XXXXXXXXXXXX
\section{ Strong convergence and convergence  rate  -proof of Theorem \ref{thm3}}\label{conv}

Our aim is  to establish strong convergence of the flows $U^{\tau}(t,U_0) $ , when $\tau \rightarrow 0 $ to  a solution $ U^0(t, P U_0)$  of Westervelt equation  \eqref{jmgt0} with initial data $PU_0 \in P(\mathbb{H}_1) $.   By strong we mean in the  strong topology of the phase  space $\mathbb{H}_1$. 
The argument will follow through the following two  steps. 
In the first one we shall derive convergence rates -- uniform convergence -- for initial data in  more regular space $\Hc$ and on  the  finite time horizon. In the second part we shall prove strong convergence in the phase space of dynamical system and for  the initial data also taken from the phase space $\mathbb{H}_1$. In this context we wish to emphasize the following difficulty: standard argument used for this type of results is based on consistency, stability and density; 
However, in the case of quasilinear problems the initial data under consideration are required to be sufficiently small, which does not cooperate with 
the usual density argument. In order to deal with the issue a careful analysis of topological smallness is necessary. For this reasons the results on uniform exponential decays of solutions obtained in previous sections  differentiate between the topology where the smallness and boundedness  is required. The appropriate calibration  of "smallness" and regularity is critical for the argument. 

To begin with let us denote  $x^\tau:= u^\tau - u^0$ where $u^{\tau} $ is a solution of \eqref{eqnl}  and $u^0 $  the solution  of Westervelt equation  \eqref{jmgt0}.We will prove:

 \begin{theorem}\label{cc0}
 For each arbitrary $T>0$ and $M>0$ there exists $\rho_M >0$, sufficiently small, and an increasing continuous function $K_T: \mathbb{R}^+ \to \mathbb{R}^+$ such that the following inequality \begin{equation}
	\label{g0} \|(x^\tau(t),x_t^\tau(t))\|_{\D{A}\times\D{\p}}^2 = \|Ax^\tau(t)\|_2^2 + \|\p x^\tau_t(t)\|_2^2 \leqslant \tau K_T(||U_0||^2_{\mathbb{H}_2}),
	\end{equation} is true for  all initial data such that $ \|U_0\|_{\mathbb{H}_2}  \leq M $ and $E^{\tau} (0) \leq \rho_M $,    $\tau \in \Lambda = (0,\tau_0]$ and all $t \in [0,T].$ 
\end{theorem}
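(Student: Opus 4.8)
The plan is to derive a single evolution equation for the error $x^\tau = u^\tau - u^0$ and to run a weighted energy estimate whose only genuinely inhomogeneous forcing is of size $\tau$, so that Gronwall's inequality converts this into the stated bound. Writing the JMGT equation \eqref{eqnl} and the Westervelt equation \eqref{jmgt0} in quasilinear form and subtracting, I would first record that $x^\tau$ satisfies
$$(1-2ku^\tau)x^\tau_{tt} + c^2 A x^\tau + \delta A x^\tau_t = \widetilde N^\tau - \tau u^\tau_{ttt} - \tau c^2 A u^\tau_t,$$
where $\widetilde N^\tau := 2k\,u^0_{tt}\,x^\tau + 2k(u^\tau_t + u^0_t)\,x^\tau_t$ and, crucially, the initial data vanish: $x^\tau(0) = x^\tau_t(0) = 0$ (the two flows share $u_0,u_1$). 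The point of this organization is that $\widetilde N^\tau$ is \emph{linear} in the error, with coefficients $u^0_{tt}, u^\tau_t, u^0_t$ bounded uniformly in $\tau$ by Theorem \ref{thm2} and by the (parabolic) regularity of the Westervelt solution $u^0$, while $\delta A x^\tau_t$ is the strong damping. Thus $x^\tau$ solves a strongly damped wave equation whose forcing, apart from $\widetilde N^\tau$, is explicitly proportional to $\tau$.

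Next I would test this equation with $A x^\tau_t$ and integrate over $(0,t)$, producing the error energy $\mathcal{F}^\tau(t) \approx \|A x^\tau(t)\|_2^2 + \|\p x^\tau_t(t)\|_2^2$ together with the dissipation $\delta\int_0^t\|A x^\tau_t\|_2^2$; since $\mathcal{F}^\tau(0)=0$ the estimate starts from zero. The heart of the matter is the term $-\tau\int_0^t (u^\tau_{ttt}, A x^\tau_t)\,d\sigma$: although $\|\tau u^\tau_{ttt}\|_2$ is not small pointwise, integrating by parts in time yields $-\tau(u^\tau_{tt}, A x^\tau_t)\big|_0^t + \tau\int_0^t (u^\tau_{tt}, A x^\tau_{tt})\,d\sigma$. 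The boundary term at $t=0$ vanishes, and at time $t$ it is dominated by $\tfrac14\|\p x^\tau_t(t)\|_2^2 + \tau^2\|\p u^\tau_{tt}(t)\|_2^2$, whose second piece is $O(\tau)$ because $\tau\|\p u^\tau_{tt}\|_2^2 \leq \He^\tau = O(1)$. For the remaining integral I would substitute $A x^\tau_{tt} = A(u^\tau_{tt}-u^0_{tt})$, so that it equals $\tau\int_0^t\|\p u^\tau_{tt}\|_2^2\,d\sigma - \tau\int_0^t (\p u^\tau_{tt}, \p u^0_{tt})\,d\sigma$; both are $O(\tau)$ by the \emph{uniform} dissipation bound $\int_0^t\|\p u^\tau_{tt}\|_2^2\leq C$ furnished by \eqref{sth2} and the matching $L^2(0,T;\D{\p})$ regularity of $u^0_{tt}$. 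The term $-\tau c^2\int_0^t(Au^\tau_t, Ax^\tau_t)$ is Young-split into $O(\tau^2)$ plus a fraction of the dissipation. This is precisely the linear mechanism of \cite{bongarti}, and it is what generates the factor $\tau$.

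The nonlinear piece $(\widetilde N^\tau, A x^\tau_t)$ is quadratic in the error (both $\widetilde N^\tau$ and the multiplier carry a factor of $x^\tau$), hence by Sobolev embeddings and the uniform coefficient bounds it is majorized by $\varepsilon\int_0^t\|Ax^\tau_t\|_2^2 + C\int_0^t\mathcal{F}^\tau\,d\sigma$ — an absorbable term plus a Gronwall term. The genuine obstacle is the variable coefficient $1-2ku^\tau$ in the principal part: integrating $((1-2ku^\tau)x^\tau_{tt}, A x^\tau_t)$ by parts in space leaves, besides the good weighted energy $\tfrac{d}{dt}\tfrac12\io(1-2ku^\tau)|\nabla x^\tau_t|^2$ and a harmless Gronwall term in $u^\tau_t|\nabla x^\tau_t|^2$, the cross term $\mathcal J := 2k\int_0^t\io \nabla u^\tau\cdot\nabla x^\tau_t\, x^\tau_{tt}$. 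Here $x^\tau_{tt}$ is only $O(1)$ a priori and cannot be regarded as an error quantity; what saves the estimate is that $\nabla u^\tau$ is small in the low topology (by Remark \ref{imp}, $\|u^\tau\|_{H^1}$ is controlled by $\rho$) while $\|\nabla x^\tau_t\|_{L^6}\leq C\|A x^\tau_t\|_2$ feeds the dissipation, giving $\mathcal J \leq \varepsilon\int_0^t\|A x^\tau_t\|_2^2 + C\rho\int_0^t\|x^\tau_{tt}\|_2^2$. I would therefore establish, as a companion estimate, the bound $\int_0^T\|x^\tau_{tt}\|_2^2\,d\sigma \leq C\tau$ — obtained by testing the error equation with $x^\tau_{tt}$, using the vanishing initial data and the same time-integration-by-parts device to absorb the singular forcing $\tau u^\tau_{ttt}$. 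This companion control of $\int_0^T\|x^\tau_{tt}\|_2^2$ (equivalently, the near-cancellation encoded in $\tau u^\tau_{ttt}$ away from the initial layer) is the delicate point of the whole proof. With all contributions assembled, one arrives at $\mathcal{F}^\tau(t) + \tfrac{\delta}{2}\int_0^t\|Ax^\tau_t\|_2^2\,d\sigma \leq C\tau\,K_T(\|U_0\|_{\mathbb{H}_2}^2) + C\int_0^t\mathcal{F}^\tau\,d\sigma$, and Gronwall's inequality on $[0,T]$ yields the assertion \eqref{g0}, with $\rho_M$ fixed small enough to guarantee both $1-2ku^\tau\geq \tfrac12$ and the absorption of the quasilinear boundary terms.
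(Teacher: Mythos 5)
Your skeleton matches the paper's at the structural level: form the error equation for $x^\tau=u^\tau-u^0$ with vanishing initial data, run an energy estimate built on the multiplier $Ax^\tau_t$, absorb the quasilinear terms using $\mathbb{H}_0$-smallness propagated along trajectories, and close with Gronwall. (The paper handles your cross term $\mathcal J$ differently but equivalently: it keeps $2ku^\tau x^\tau_{tt}$ on the right-hand side of \eqref{df1} and adds the second multiplier $x^\tau_{tt}$, so that $\int_0^t\|x^\tau_{tt}\|_2^2$ appears with a good sign and $\|u^\tau x^\tau_{tt}\|_2^2$ is absorbed by smallness; no separate companion estimate is needed.) The essential divergence is how the factor $\tau$ is extracted from the singular forcing $-\tau u^\tau_{ttt}$. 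The paper proves Lemma \ref{g1}: differentiating the JMGT equation in time, working at the $\mathbb{H}_0$ level for $v=u^\tau_t$, and estimating $\|u^\tau_{ttt}(0)\|_2=O(\tau^{-1})$ from the equation at $t=0$, it obtains $\int_0^t\|u^\tau_{ttt}\|_2^2\,d\sigma\leqslant \tau^{-1}C(\|U_0\|_{\mathbb{H}_2}^2)$; then $\int_0^t\|\tau u^\tau_{ttt}\|_2^2\,d\sigma=O(\tau)$ and the forcing is disposed of by Cauchy--Schwarz alone, using no regularity of $u^0$ beyond Theorem \ref{t0}. You instead integrate by parts in time, the mechanism of the linear analysis in \cite{bongarti}.

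That substitution creates a genuine gap. After writing $\tau\int_0^t(u^\tau_{tt},Ax^\tau_{tt})\,d\sigma=\tau\int_0^t\|\p u^\tau_{tt}\|_2^2\,d\sigma-\tau\int_0^t(\p u^\tau_{tt},\p u^0_{tt})\,d\sigma$, the first term is indeed $O(\tau)$ by the dissipation in \eqref{sth2}, and your boundary term is $O(\tau)$ since $\tau\|\p u^\tau_{tt}\|_2^2\leqslant \He^\tau\leqslant C$ by Theorem \ref{thm2}; but the second term requires $u^0_{tt}\in L^2(0,T;\D{\p})$, which you invoke as ``matching regularity'' without proof. The paper's Theorem \ref{t0} (estimate \eqref{lime}) gives only $u^0_{tt}\in L^2(0,T;L^2(\Omega))$, and the $\D{\p}$-regularity of $u^0_{tt}$ is a strictly higher-order statement about the quasilinear Westervelt flow -- the $\tau=0$ analogue of the $\He^\tau$-level estimate -- which is established nowhere and does not follow from the $C^s([0,T];H^{2-s}(\Omega))$ theory of \cite{WE}. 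The same deficit reappears, in worse form, in your companion estimate $\int_0^T\|x^\tau_{tt}\|_2^2\,d\sigma\leqslant C\tau$: testing with $x^\tau_{tt}$, the term $-\tau\int_0^t(u^\tau_{ttt},x^\tau_{tt})\,d\sigma$ either needs Lemma \ref{g1} (plain Cauchy--Schwarz leaves $\tau^2\int_0^t\|u^\tau_{ttt}\|_2^2\,d\sigma$, which is only known to be $O(\tau)$ through that lemma), or, after substituting $x^\tau_{tt}=u^\tau_{tt}-u^0_{tt}$ and integrating by parts in time, a term requiring $u^0_{ttt}\in L^2(0,T;L^2(\Omega))$ -- again unavailable. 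So the argument does not close as written: you must either add a higher-order regularity lemma for the Westervelt equation (comparable in weight to Theorem \ref{t0}) or adopt the paper's device and prove the $\tau$-weighted bound on $\int_0^t\|u^\tau_{ttt}\|_2^2\,d\sigma$ directly from the JMGT dynamics.
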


\begin{rmk}
	Notice that part (a) of Theorem \ref{thm3}  -- although written in the notation of the  nonlinear flows  -- is equivalent to the first statement in  Theorem \eqref{cc0}.
\end{rmk}
Clearly, the regularity of $u^0$ is going to play  the major role. The linear part of Westervelt equation generates an analytic semigroup and the system has ``maximal regularity" \cite{hieber,lunardi}. These are powerful tools in the study of the regularity of \eqref{jmgt0}. For instance, \cite{wilke} provides a refined  theory of wellposedness 
for \eqref{jmgt0} within the  $L^p$ framework. As to the  Hilbert framework (relevant to this work)  - from Theorem 1.1 in \cite{WE} we know that 
for the initial data $\|Au(0)\|+ \|Au_t(0)\| $ sufficiently small one obtains unique  global solution in  $ u^0 \in C^s([0,T], H^{2-s}(\Omega) ) , s =0,1,2 $. However,the  above results will not be sufficient for our analysis. We will need a tighter control of the smallness and regularity -- a result stated later  in Theorem \ref{t0}. 

The proof of Theorem \ref{cc0}  will be accomplished through the following  steps: 
%\begin{itemize}
	%\item[\bf Step 1:] 
	We first prove that $x^\tau = u^\tau-u^0$ satisfy a (second order in time) PDE with forcing term dependent of $x$ and $u$ as well on their derivatives. Applying the multipliers $Ax^\tau_t$ and $x^\tau_{tt}$  reconstructs the terms on the left side of \eqref{g0}. However, the right hand side becomes singular. Handling this singularity is the main component of the proof. We are faced with the following challenges:
%\begin{itemize}
		%\item[(a)] 
		({\bf{1})} Can the integral term $\tau\displaystyle\int_0^t \|u^\tau_{ttt}(\sigma)\|_2^2d\sigma$ be controlled by the $\Hc$  initial data which are small in $\mathbb{H}_0$? {({\bf 2})}  Can we work with the limit evolution $U^0(t) $  under less stringent regularity and smallness assumptions? 
		 		%These  two \emph{questions} are adressed in next two steps.
	Positive answers to these questions are given below.  In fact, from the result of Theorem \ref{wph2} we already know that for each $\tau > 0 $, $u_{ttt}^{\tau}(t) 
	$ is in $L_2(\Omega) $. The issue is that of controlling it's singularity when $ \tau \rightarrow 0 $. This is  the content of the Lemma below. 
	  \begin{lemma}
		\label{g1}  Let $U^{\tau} $ be a solution of (\ref{eqnl}). Assume $||U_0||_{\mathbb{H}_2} \leq M$   and $\|U_0\|_{\mathbb{H}_0} < \rho_M$ with  $\rho_M$  small. Then, there exists a continuous bounded function  $C(s)>0 $,  $($independent of $\tau \in \Lambda $$)$, such that $$\int_0^t \|u^\tau_{ttt}(\sigma)\|_2^2 d\sigma  \leqslant  \frac{1}{\tau} C(\|U_0\|_{\Hc}^2).$$
	\end{lemma}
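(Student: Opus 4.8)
The plan is to test the rewritten equation \eqref{nll} with the multiplier $u^\tau_{ttt}$ in $L^2(\Omega)$ and integrate in time over $[0,t]$ (working, as the paper allows, with the smooth solutions supplied by the regularity theory). Since $\tau(u^\tau_{ttt},u^\tau_{ttt})=\tau\|u^\tau_{ttt}\|_2^2$ is exactly the quantity we wish to bound, the idea is to transfer the time derivative off $u^\tau_{ttt}$ in the three remaining linear terms. Using $(u^\tau_{tt},u^\tau_{ttt})=\tfrac12\frac{d}{dt}\|u^\tau_{tt}\|_2^2$, then $c^2(Au^\tau,u^\tau_{ttt})=c^2\frac{d}{dt}(Au^\tau,u^\tau_{tt})-\tfrac{c^2}{2}\frac{d}{dt}\|\p u^\tau_t\|_2^2$, and $b(Au^\tau_t,u^\tau_{ttt})=b\frac{d}{dt}(Au^\tau_t,u^\tau_{tt})-b\|\p u^\tau_{tt}\|_2^2$, the test produces the identity
\[
\tau\int_0^t\|u^\tau_{ttt}\|_2^2\,d\sigma+\Phi(t)=\Phi(0)+b\int_0^t\|\p u^\tau_{tt}\|_2^2\,d\sigma+\int_0^t(G(u^\tau),u^\tau_{ttt})\,d\sigma,
\]
where $\Phi:=\tfrac12\|u^\tau_{tt}\|_2^2+c^2(Au^\tau,u^\tau_{tt})-\tfrac{c^2}{2}\|\p u^\tau_t\|_2^2+b(Au^\tau_t,u^\tau_{tt})$. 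The integral $b\int_0^t\|\p u^\tau_{tt}\|_2^2\,d\sigma$ is already controlled uniformly in $\tau$ by a multiple of $\He^\tau(0)$ via the stabilizability inequality \eqref{sth2} from Theorem \ref{thm2}, and $\Phi(0)$ is bounded by $C(\|U_0\|_{\Hc}^2)$ because $U_0\in\mathbb{H}_2$.

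The crucial observation — the one that yields the sharp factor $\tau^{-1}$ rather than a weaker power — is that the indefinite functional $\Phi$ is, up to uniformly bounded corrections, a perfect square:
\[
\Phi=\tfrac12\|u^\tau_{tt}+c^2Au^\tau+bAu^\tau_t\|_2^2-\tfrac{c^4}{2}\|Au^\tau\|_2^2-\tfrac{b^2}{2}\|Au^\tau_t\|_2^2-bc^2(Au^\tau,Au^\tau_t)-\tfrac{c^2}{2}\|\p u^\tau_t\|_2^2.
\]
Every term other than the square is majorized by $C\He^\tau(t)\le C$ uniformly in $\tau$, again by Theorem \ref{thm2}. Setting $Q(t):=\|u^\tau_{tt}+c^2Au^\tau+bAu^\tau_t\|_2^2\ge0$, we therefore keep $\tfrac12Q(t)$ as a nonnegative quantity on the left and reduce matters to $\tau\int_0^t\|u^\tau_{ttt}\|_2^2\,d\sigma+\tfrac12Q(t)\le C(\|U_0\|_{\Hc}^2)+\int_0^t(G(u^\tau),u^\tau_{ttt})\,d\sigma$. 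The whole point of this manoeuvre is that it avoids any pointwise estimate of $\|\p u^\tau_{tt}(t)\|_2$, a quantity the energy $\He^\tau$ controls only as $O(\tau^{-1/2})$, and which would otherwise degrade the rate.

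It then remains to absorb the nonlinear contribution. Writing $G(u^\tau)=2ku^\tau u^\tau_{tt}+2k(u^\tau_t)^2$ and integrating by parts in time through the self-adjoint identities $(u^\tau u^\tau_{tt},u^\tau_{ttt})=\tfrac12\frac{d}{dt}\int_\Omega u^\tau(u^\tau_{tt})^2-\tfrac12\int_\Omega u^\tau_t(u^\tau_{tt})^2$ and $((u^\tau_t)^2,u^\tau_{ttt})=\frac{d}{dt}\int_\Omega (u^\tau_t)^2u^\tau_{tt}-2\int_\Omega u^\tau_t(u^\tau_{tt})^2$ — both of which eliminate $u^\tau_{ttt}$ from the remainders — the term $\int_0^t(G(u^\tau),u^\tau_{ttt})\,d\sigma$ is reduced to boundary terms at $t$ and $0$ plus the genuinely lower-order space-time integral $\int_0^t\int_\Omega u^\tau_t(u^\tau_{tt})^2$. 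The boundary terms at $t$ are dominated by $\|u^\tau\|_{L^\infty}\|u^\tau_{tt}(t)\|_2^2$ and $\|\p u^\tau_t(t)\|_2^2\,\|u^\tau_{tt}(t)\|_2$; using $\|u^\tau_{tt}(t)\|_2^2\le2Q(t)+C$ together with the smallness of $\|u^\tau\|_{L^\infty}$ — which, via $\|u^\tau\|_{L^\infty}\le C\|u^\tau\|_{H^1}^{1/2}\|u^\tau\|_{H^2}^{1/2}\le C(E^\tau)^{1/4}(\E^\tau)^{1/4}$ and the a priori bounds $E^\tau\le C\rho_M$, $\E^\tau\le C(M)$ of Theorems \ref{thm1}--\ref{thm2}, is made as small as desired by shrinking $\rho_M$ — these are absorbed into $\tfrac12Q(t)$ up to a bounded remainder. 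The surviving integral is estimated by $C\int_0^t\|u^\tau_t\|_{L^\infty}\|u^\tau_{tt}\|_2^2\,d\sigma\le C\int_0^t\|Au^\tau_t\|_2\|u^\tau_{tt}\|_2^2\,d\sigma\le C$, uniformly in $\tau$, since $\|Au^\tau_t\|_2\le C$ and $\int_0^t\|u^\tau_{tt}\|_2^2\,d\sigma\le C\int_0^t\|\p u^\tau_{tt}\|_2^2\,d\sigma\le C$ by \eqref{sth2} and Poincaré. Collecting terms gives $\tau\int_0^t\|u^\tau_{ttt}\|_2^2\,d\sigma\le C(\|U_0\|_{\Hc}^2)$, i.e.\ the claim after dividing by $\tau$.

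The delicate point, and the reason for the bookkeeping above, is the coexistence of two singular features: $u^\tau_{tt}$ is controlled by the energy only with the weight $\tau$, so $\|u^\tau_{tt}(t)\|_2,\|\p u^\tau_{tt}(t)\|_2=O(\tau^{-1/2})$, while $u^\tau_{ttt}$ re-enters through the differentiated nonlinearity. A naive pointwise inversion of \eqref{nll} for $u^\tau_{ttt}$ squanders a full power of $\tau$ and gives only $O(\tau^{-2})$; the recovery to the optimal $O(\tau^{-1})$ rests entirely on (i) the perfect-square completion of $\Phi$, which replaces the pointwise $O(\tau^{-1/2})$ boundary term by the manageable nonnegative $Q(t)$, and (ii) the calibrated smallness of the lowest-order energy $E^\tau$, which keeps the $L^\infty$-coefficients of the nonlinear boundary terms small enough to absorb. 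This is precisely the interplay between \emph{boundedness} in $\He^\tau$ and \emph{smallness} in $E^\tau$ isolated in Theorems \ref{thm1} and \ref{thm2}, and I expect the square-completion step to be the main obstacle to getting the power of $\tau$ right.
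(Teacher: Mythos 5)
Your proof is correct, but it takes a genuinely different route from the paper's. The paper differentiates the equation in time: after a semigroup argument justifying the differentiation, $v^\tau=u^\tau_t$ satisfies the MGT-type equation \eqref{difve} with forcing $G'(u^\tau)=2k(3u^\tau_tu^\tau_{tt}+u^\tau u^\tau_{ttt})$, and the energy identity \eqref{id1} (multiplier $v^\tau_{tt}+c^2b^{-1}v^\tau_t$) applied to that system produces $\gamma^\tau\int_0^t\|u^\tau_{ttt}\|_2^2\,d\sigma$ as an \emph{order-one} dissipation term; the factor $\tau^{-1}$ then enters entirely through data terms, namely $\tau\|u^\tau_{ttt}(0)\|_2^2$ --- bounded by evaluating the PDE at $t=0$, which gives $\|u^\tau_{ttt}(0)\|_2=O(\tau^{-1})$ --- and $\|\p u_2\|_2^2\leqslant\tau^{-1}\|U_0\|_{\mathbb{H}_2^\tau}^2$. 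You instead test the original equation \eqref{nll} with $u^\tau_{ttt}$, so the coercive term is the $\tau$-weighted $\tau\int_0^t\|u^\tau_{ttt}\|_2^2\,d\sigma$, and $\tau^{-1}$ appears only upon dividing at the end. Your two key devices are sound, and both are genuinely needed for the right power of $\tau$: the perfect-square completion of $\Phi$ (the algebra checks out) avoids the pointwise $O(\tau^{-1/2})$ bounds on $\|u^\tau_{tt}(t)\|_2$ and $\|\p u^\tau_{tt}(t)\|_2$, and the time-integration by parts in $(G(u^\tau),u^\tau_{ttt})$ removes $u^\tau_{ttt}$ from the nonlinearity --- a naive Cauchy--Schwarz there would place $\tau^{-1}\int_0^t\|G(u^\tau)\|_2^2\,d\sigma$ on the right and degrade the final rate to $O(\tau^{-2})$. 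Your absorption steps use exactly the ingredients the paper also relies on: smallness of $E^\tau$ and boundedness of $\E^\tau,\He^\tau$ from Theorems \ref{thm1} and \ref{thm2}, plus the integrated dissipation $\int_0^t\|\p u^\tau_{tt}(\sigma)\|_2^2\,d\sigma\leqslant C\He^\tau(0)$ from \eqref{sth2}. What your route buys: no need to justify time-differentiation of the quasilinear equation (the paper requires a semigroup/compatibility argument for this) and no need to touch the singular quantity $u^\tau_{ttt}(0)$. What the paper's route buys: it reuses the identity \eqref{id1} machinery verbatim and localizes the source of the singularity transparently in the initial data of the differentiated system. Both arguments require the same regularity (Theorem \ref{wph2}: $u^\tau_{ttt}\in C([0,T];L^2(\Omega))$ for $U_0\in\mathbb{H}_2$) to legitimize the multiplier computations on smooth solutions.
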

	
	%\item[\bf Step 3:] 
	%Question (b) is solved by the Theorem 
	\begin{theorem}\label{t0} Let $U^0 =(u^0, u_t^0) $ be a solution to the Westervelt equation with the initial conditions subject to the following assumption: 
For each $M>0$ there exists $\rho_M$ small such that  $$\dfrac{1}{2}[\|u^0_t(0)\|_2^2+\|A^{1/2}u^0(0)\|_2^2] \leqslant \rho_M \ \ \mbox{and} \ \ \dfrac{1}{2}[\|\p u^0_t(0)\|^2+\|Au^0(0)\|^2] \leqslant M.$$ Then, there exists constants $C(\rho_M,M)$ and $\omega_0 > 0 $ such that 
		$$\|Au^0(t)\|_2^2 + \|A^{1/2}u^0_t(t)\|_2^2 \leq  C (M)  e^{-\omega_0 t} , ||A^{1/2}u^0(t)\|_2^2 + \|u^0_t(t)\|_2^2 \leq  \rho_M  C(M)  e^{-\omega_0 t}$$
		\begin{equation}\label{lime}\int_0^t [\|u^0_{tt}(s)\|_2^2  + \|Au^0(s)\|_2^2 + \|A^{1/2}u^0_t(s)\|_2^2]ds  \leq  C (\rho,M), t >0 \end{equation}
	\end{theorem}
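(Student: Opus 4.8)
The plan is to re-derive, for the limit (second-order, strongly damped) Westervelt equation \eqref{jmgt0}, the same kind of multiplier-based stabilizability inequalities established in Sections \ref{unlow}--\ref{unhigh}, but organized at two topological levels so that the smallness of the low-order data is propagated globally while only boundedness is assumed at the high-order level. To this end I would introduce the low energy $E_{\mathrm{lo}}(t):=\tfrac12\|u^0_t\|_2^2+\tfrac{c^2}{2}\|\p u^0\|_2^2$ and the high energy $E_{\mathrm{hi}}(t):=\tfrac12\|\p u^0_t\|_2^2+\tfrac12\|Au^0\|_2^2$, which are equivalent to the two quantities appearing in the hypotheses. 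The decisive preliminary observation is that the degenerate coefficient $(1-2ku^0)$ stays uniformly comparable to $1$: by the interpolation inequality $\|u^0\|_{L^\infty}\leqslant C\|u^0\|_{H^2}^{1/2}\|u^0\|_{H^1}^{1/2}$ used in Step 3, the high bound $\|Au^0\|_2\lesssim M^{1/2}$ together with the smallness $\|\p u^0\|_2\lesssim \rho_M^{1/2}$ gives $\|u^0\|_{L^\infty}\lesssim M^{1/4}\rho_M^{1/4}$, so choosing $\rho_M$ small keeps $1-2ku^0$ bounded away from zero for all $t$. This is what allows the quasilinear argument to close globally.

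First I would establish the low-level inequality. Testing \eqref{jmgt0} with $u^0_t$ produces an energy identity whose dissipation is $\delta\|\p u^0_t\|_2^2$, the factor $(1-2ku^0)$ and the quadratic right-hand side being absorbed through Sobolev embeddings, interpolation and the $L^\infty$-smallness above, exactly as in \eqref{m3}--\eqref{m7}. To upgrade boundedness into decay I would recover the missing $\|\p u^0\|_2^2$ by testing with $\lambda u^0$ ($\lambda>0$ small), as in \eqref{m13}--\eqref{m17}, arriving at
\begin{equation*}
E_{\mathrm{lo}}(t)+c_1\int_0^t E_{\mathrm{lo}}(\sigma)\,d\sigma\leqslant c_2\,E_{\mathrm{lo}}(0),
\end{equation*}
with $c_1,c_2>0$ depending only on the physical parameters and on the smallness margin. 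By the standard argument (see \cite{las-tat,ong}) this yields $E_{\mathrm{lo}}(t)\leqslant C(M)E_{\mathrm{lo}}(0)e^{-\omega_0 t}$; since $E_{\mathrm{lo}}(0)\leqslant\rho_M$, the low-energy bound inherits the factor $\rho_M$, which is precisely the second asserted estimate. A by-product is $\int_0^\infty\|\p u^0_t(\sigma)\|_2^2\,d\sigma\lesssim\rho_M$.

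Next I would run the analogous computation at the $H^2$ level, testing \eqref{jmgt0} with $u^0_{tt}$ and with $\lambda Au^0$ to reconstruct $\|Au^0\|_2^2$, $\|\p u^0_t\|_2^2$ and the dissipative integral of $\|u^0_{tt}\|_2^2$. The genuinely new nonlinear contributions come from $(G(u^0),u^0_{tt})$ and $(G(u^0),Au^0)$, estimated by the Gagliardo--Nirenberg and product-rule bounds of \eqref{m37}; crucially their coefficients are multiples of $\|u^0\|_{L^\infty}$ and $\|\p u^0\|_2$, i.e. of quantities controlled by $\rho_M$, so they remain subordinate to the leading $c^2\|Au^0\|_2^2$ and $\delta\|\p u^0_t\|_2^2$ terms once $\rho_M$ is small. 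A Barrier (bootstrap) argument identical to Step 5 then closes the inequality
\begin{equation*}
E_{\mathrm{hi}}(t)+c_3\int_0^t\big(\|u^0_{tt}(\sigma)\|_2^2+E_{\mathrm{hi}}(\sigma)\big)\,d\sigma\leqslant c_4\,E_{\mathrm{hi}}(0)+c_5\int_0^t\|\p u^0_t(\sigma)\|_2^2\,d\sigma,
\end{equation*}
and since the last integral is already bounded by $\rho_M$ from the low level (or absorbed into $\int_0^t E_{\mathrm{hi}}$), this gives both the exponential decay $\|Au^0(t)\|_2^2+\|\p u^0_t(t)\|_2^2\leqslant C(M)e^{-\omega_0 t}$ and, upon retaining the dissipation terms, the uniform-in-time integral bound \eqref{lime} with constant $C(\rho,M)$.

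The main obstacle is not any individual estimate but the circular dependence inherent in the quasilinear problem over an infinite horizon: the decay of $E_{\mathrm{hi}}$ is available only once $(1-2ku^0)$ is known to remain non-degenerate for all $t$, which requires the global smallness of $E_{\mathrm{lo}}$, whose own decay was in turn derived assuming $E_{\mathrm{hi}}$ stays bounded by $M$. Breaking this loop is exactly the purpose of the two-level calibration -- smallness is imposed and propagated only in the lowest topology, while the higher energy is merely required to be bounded -- and the Barrier argument is what certifies that both properties persist simultaneously on $[0,\infty)$. One may alternatively invoke the analyticity of the linear part and maximal parabolic regularity \cite{hieber,lunardi,wilke,WE} to obtain $u^0_{tt}\in L^2(0,T;L^2(\Omega))$, but the self-contained multiplier scheme is preferable here since it is the only route that delivers the decay constants in the required $\rho_M$-versus-$M$ form needed for Lemma \ref{g1} and Theorem \ref{cc0}.
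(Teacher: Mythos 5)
Your proposal is correct and follows essentially the same route as the paper's own proof: the paper likewise tests \eqref{jmgt0} with $u^0_t$, $Au^0$ and $u^0_{tt}$, controls the cubic terms through the embeddings $H^{1/2}(\Omega)\hookrightarrow L^3(\Omega)$, $H^1(\Omega)\hookrightarrow L^4(\Omega)$ and the interpolation bounds $\|u^0\|_{L^\infty}\leqslant C\|Au^0\|_2^{1/2}\|\p u^0\|_2^{1/2}$ and $\|u^0_t\|_{H^{1/2}}\leqslant\|u^0_t\|_2^{1/2}\|\p u^0_t\|_2^{1/2}$ (both dominated by $E_{u,0}^{1/4}\E_u^{1/4}$), and then closes the argument by Barrier's Method with smallness imposed only on the low energy and boundedness on the high one. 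The only cosmetic difference is that you recover $\int_0^t\|\p u^0\|_2^2\,d\sigma$ via the extra multiplier $\lambda u^0$ so as to close the low-level inequality separately, whereas the paper extracts it from the $Au^0$ multiplier and adds all three identities at once; both yield the same decay structure and the bound \eqref{lime}.
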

%\end{itemize}

%In the second part, strong convergence in the phase space, we...

%We now focus on proving all these steps. For the sake of readability, we will start the proofs of the first part from Steps 2 and 3 so Step 1 can be proved with no breaks. 

\begin{proof}[\bf Proof of Lemma \ref{g1}:]
% begin{rmk}\label{podediff} 
Since $\A^{\tau} $ generates a linear semigroup on  each of the spaces $\mathbb{H}_i$, $i=0,1,2 $, \cite{bongarti,mgtp1,TrigMGT},  linear semigroup theory allows to  represent any solution of the non-homogenous problem  
$U^{\tau}_t = \A^{\tau} U^{\tau} (t) + F(t)  $ with $F\in L^1(0,T, \mathbb{H}_i) )$,   via  the variation of parameters formula: 
$U^\tau(t) = e^{ \A^\tau  t} U_0 + \int_0^te ^{ \A^\tau  (t-\sigma) }  F(t-\sigma) d\sigma  $ . When $F \in W^{1,1}((0,T), \mathbb{H}_i)$  and $\A^{\tau} U_0 - F(0)\in \mathbb{H}_i $,   one also has 
$U_t^\tau(t) = e^{ \A^\tau  t} [ \A^{\tau} U_0 -  F(0) ]  + \int_0^te ^{ \A^\tau  (t-\sigma) }  F_t(\sigma) ds  $.  
We shall apply the formula with $ F = \tau^{-1} [0,0, 2k ((u_t^{\tau})^2 + u^{\tau} u_{tt}^{\tau} )] $ where $u^{\tau}$ is the solution of the nonlinear problem.  
By Theorem 1.4   we have that nonlinear solutions $U^{\tau} \in C([0,T]; \mathbb{H}_2) $  for initial data also in $\mathbb{H}_2 $. For  such solutions in $\mathbb{H}_2 $ one has that $ F_t =  2k \tau^{-1} [0,0,(3u^\tau_tu^\tau_{tt} +u^\tau u_{ttt}^\tau)]  \in  L_{\infty}(\mathbb{H}_0) $ and $[A^{\tau} U_0 -  F(0) ]\in \mathbb{H}_0 $.  The latter 
can be deduced  directly from the regularity of solutions in $\mathbb{H}_2 $ and  Sobolev's embeddings in the dimensions of $\Omega$ less or equal to three.  This allows to consider the dynamics  in the variable $V^{\tau}=U^{\tau}_t \in \mathbb{H}_0$ which leads to a familiar MGT equation in $\mathbb{H}_0 $ 
 \ifdefined\xxxx
$F_t(U^\tau) =\tau^{-1}  [0,0, 2k(3u^\tau_tu^\tau_{tt} +u^\tau u_{ttt}^\tau) ]  \in \mathbb{H}_0 $,
 $$U^\tau(t)+ \tau^{-1} F(U^\tau), \ t>0,$$ 
An intermediate step to make the differentiation of \eqref{eqnl} not only formal is needed. For this, recall that if $X$ is a Banach space and $A: D(A) \subset X \to X$ generates a $C_0$-semigroup in $X$, then the \emph{linear}-problem $z_t = Az + h$ will have a solution $u \in C(0,T; X)$ provided $u(0) \in X$ and $F \in L^1(0,T; X).$ Therefore, the formal differentiation, $w_t = Aw + h_t$ ($w = z_t$) will have a solution $w \in C(0,T; H)$ provided $w(0) = u_t(0) = Au(0) + h(0) \in H$ and $h_t \in L^1(0,T; H).$

Back to our setting, recall that if $U_0 \in \mathbb{H}_2$, then the problem \eqref{eqnl} written as $$U^\tau_t(t) = \A^\tau U^\tau(t)+ \tau^{-1} F(U^\tau), \ t>0,$$ with $U^\tau(0)=U_0$ have a solution $U^\tau \in C(0,T; \mathbb{H}_2).$ Therefore, in view of the comments on the previous paragraph, differentiating the problem is justified with $U_t \in C(0,T; \mathbb{H}_1)$ provided $U_t(0) = \A^\tau U_0 + \tau^{-1}F(U_0) \in \mathbb{H}_1$ (which is the case since $U \in \mathbb{H}_2$ for each $t$) and $F_t(U^\tau) = 2k \tau^{-1} [0,0,(3u^\tau_tu^\tau_{tt} +u^\tau u_{ttt}^\tau)]  \in L^2(\Omega)$, which is again easy to see since $u^\tau, u_t^\tau \in \D{A} \hookrightarrow H^\infty(\Omega)$ and then multiplication by $L^2$-functions remains in $L^2(\Omega).$ Therefore $U_t \in C(0,T; \mathbb{H}_1)$. This justifies  \eqref{difve}.
\fi 
%ZZZZZZZZZZ
%Let $v\equiv u^{\tau}_t$ (see Remark \ref{podediff}). 
%Then  the original equation \eqref{nll}  gives 
\begin{equation}\label{difve}
	\tau v^\tau_{ttt} + v^\tau_{tt} + c^2 Av^\tau + bAv^\tau_{t} = G'( u^\tau) = 2k(3u_t^\tau u_{tt}^\tau + u^\tau u^\tau_{ttt}) \in L_2(\Omega)
	\end{equation} 
	Repeating the calculations  leading to the proof of \eqref{id1}, but applied to \eqref{difve}   we obtain
	\begin{equation}\label{n1}
	\gamma^{\tau } \int_0^t ||v^{\tau}_{tt}||^2_2 d \sigma \leq |\int_0^t (G'(u^{\tau} ), m(\sigma) )  d\sigma |+ C [||A^{1/2} v^{\tau}(0) ||^2_2 +
	|| A^{1/2} v^{\tau}_{t}(0) ||^2_2 + \tau ||v^{\tau}_{tt}(0)  ||_2^2 ] 
	\end{equation}
	where $m(t) := v_{tt}^{\tau} + c^2 b^{-1} v_{t}^{\tau} $.
	%XXXXXXXXX
	\ifdefined\xxxxxx
	We then take the $L^2$-inner product of \eqref{difve} with $m(t) := u_{ttt}^{\tau} + c^2 b^{-1} u_{tt}^{\tau} = z^\tau_{tt}$ to get: \begin{align}
	\label{g2} (F'(u^\tau),z^\tau_{tt}) &= (\tau u^\tau_{tttt} + u^\tau_{ttt} + c^2 Au^\tau_t + bAu^\tau_{tt},u^\tau_{ttt}) + \dfrac{c^2}{b}\left[(\tau u^\tau_{tttt} + u^\tau_{ttt} + c^2 Au^\tau_t + bAu^\tau_{tt},u^\tau_{tt})\right] \nonumber \\ &= \dfrac{\tau}{2}\dfrac{d}{dt}\|u^\tau_{ttt}\|_2^2 + \|u^\tau_{ttt}\|_2^2 + c^2\dfrac{d}{dt}(Au^\tau_t,u^\tau_{tt})-c^2\|\p u^\tau_{tt}\|_2^2 + \dfrac{b}{2}\dfrac{d}{dt}\|\p u^\tau_{tt}\|_2^2 \nonumber \\ &+ \dfrac{c^2}{b}\left[\tau\dfrac{d}{dt}(u^\tau_{ttt},u^\tau_{tt}) - \tau\|u^\tau_{ttt}\|_2^2 +\dfrac{1}{2}\dfrac{d}{dt}\|u^\tau_{tt}\|_2^2 + \dfrac{c^2}{2}\dfrac{d}{dt}\|\p u^\tau_t\|_2^2 + b \|\p u^\tau_{tt}\|_2^2\right].
	\end{align} 
	\fi
	This leads to 
	%Now, observe that if we integrate in time from $0$ to $t \in [0,T]$ almost all the components of the expression can be estimated   by our past energy functions and, therefore, can be dealt with under our current conditions. The only ones not covered are the third order derivatives as well as the nonlinear terms. We then simplify \eqref{g2} by writing
	\begin{equation}\label{g3}\int_0^t \gamma^\tau \|u^\tau_{ttt}(\sigma)\|_2^2 ds \leq C[  \tau \|u^\tau_{ttt}(0)\|_2^2  + \tau^{-1} \|U_0\|^2_{\mathbb{H}_2^{\tau}} +\int_0^t (G'(u^\tau(\sigma)),m(\sigma)) d\sigma].\end{equation}. It remains to estimate the nonlinear terms and $u_{ttt}^{\tau}(0) $. 
	For the nonlinear terms we apply the estimates in Theorem \ref{thm1} and Theorem \ref{thm2} . We thus obtain (for each $\varepsilon > 0$):\begin{align}
	\label{g4} (2k)^{-1}(G'(u^\tau) ), m) &= 3(u^\tau_t u^\tau_{tt}+u^\tau u^\tau_{ttt},u^\tau_{ttt}) + \dfrac{c^2}{b}(u^\tau_t u^\tau_{tt}+u^\tau u^\tau_{ttt},u^\tau_{tt}) \nonumber \\ & \leqslant C_1(\varepsilon)(\|u^\tau_t\|_{L^4}^2+\|u^\tau\|_{L^4}+\|u^\tau_t\|_2)\|u^\tau_{tt}\|_{L^4}^2 + (\varepsilon + \|u^\tau\|_{L^\infty})\|u^\tau_{ttt}\|_2^2 \nonumber \\ & \leqslant C_1(\varepsilon)(\|u^\tau_t\|_{H^1}^2+\|u^\tau\|_{L^4}^2+\|u^\tau_t\|_2)\|u^\tau_{tt}\|_{L^4}^2 + \left(\varepsilon + \|u^\tau\|_{H^2}^{1/2}\|u^\tau\|_{H^1}^{1/2}\right)\|u^\tau_{ttt}\|_2^2 \nonumber \\ &\leqslant C_1(\varepsilon) \He^\tau(0)[\E^\tau(0)]^{1/2}  + \left(\varepsilon + [\E^\tau(\sigma)]^{1/4}[E^\tau(\sigma)]^{1/4}\right) \|u^{\tau}_{ttt}(\sigma)\|_2^2,
	\end{align} for all $\sigma > 0.$ Returning  back in \eqref{g3} we have
	\begin{equation} \label{g5}
	\int_0^t \left(\gamma^\tau -\epsilon - [\E^\tau(\sigma)]^{1/2}[E(\sigma)]^{1/2}\right)\|u^\tau_{ttt}(\sigma)\|_2^2 d\sigma \leqslant C_1(\varepsilon)\mathfrak{E}^\tau(0)[\E^\tau(0)]^{1/2}  + C \tau^{-1} ||U_0||^2_{\mathbb{H}_2^{\tau}} +  C\tau \|u^{\tau}_{ttt}(0)\|_2^2,
	\end{equation}
	% and  it remains to estimate only the initial value for the  third derivative.
	 Going back to the original equation, evaluating it at $t = 0$, 
	gives
	$$\|u^{\tau}_{ttt}(0)\|^2_2 \leq \tau^{-2} C [\|Au_0\|_2^2 + \|Au_1\|_2^2 + \|u_2\|_2^2 + \|u_0 u_2 + u_1^2\|^2]
	\leq  \tau^{-2} C [\|U_0\|^2_{\mathbb{H}_2}  + \|Au_0\|^2_2\|u_2\|^2_{2}  + \|A^{1/2} u_1\|_2^4]
	 $$
	%solving it for $\tau u^\tau_{ttt}(0)$, taking squared $L^2$-norm both sides and applyting triangular and interpolation inequalities we arrive at
	\begin{align}
	 \|u^\tau_{ttt}(0)\|_2^2 \leqslant \tau^{-2} C(\|U_0\|_{\Hc}^2),
	\end{align} and going back to (\ref{n1}), recalling  positivity of $\gamma^\tau$, smallness of $\varepsilon$ and $\rho$ -- note that the smallness  of initial data  propagates  in time along the trajectories --  and boundedness of $\E^\tau$ $\He^{\tau} $ , the assertion follows.
\end{proof}

%XXXXXXXXXXX
\begin{proof}[\bf Proof of Theorem \ref{t0}]
 We denote $2E_{u,0}(t) := \|u^0_t(t)\|_2^2 + \|\p u^0(0)\|_2^2$, $2E_{u,1}(t) = \|u^0_{tt}\|_2^2 + \|\p u^0_t(t)\|_2^2 + \|A u^0(t)\|_2^2$ and $2\E_u(t) := \|\p u^0_t(t)\|_2^2 + \|Au^0(t)\|_2^2.$ From  the assumption, then, we have, for each $M>0$ the existence of a sufficiently small $\rho_M$ such that $\E_u(0) \leqslant M$ and $E_{u,0}(0) \leqslant \rho_M.$ 	The following calculations   are patterned after \cite{WE} -- however the result proved below has  less regular initial data along with tighter control of the required smallness -- a fact which is needed for the ultimate result.  First, taking the $L^2$-inner product of \eqref{jmgt0} with $u^0_t(t) \in \D{A}$ we have \begin{align}
	k(u^0_t,(u^0_t)^2) &= \dfrac{d}{dt}\left[\dfrac{1}{2}\|u^0_t\|_2^2 + \dfrac{c^2}{2}\|\p u^0\|_2^2 + k(u^0,(u^0_t)^2)\right] + b\|\p u^0_t\|_2^2,
	\end{align} integration in time then implies \begin{align}\label{g7}
	\dfrac{1}{2}\|u^0_t\|_2^2 + \dfrac{c^2}{2}\|\p u^0\|_2^2 + b\int_0^t \|\p u^0_t(\sigma)\|_2^2d\sigma \leqslant C_1E_u(0) - k(u^0,(u^0_t)^2)\biggr\rvert_0^t - k\int_0^t(u^0_t(\sigma),(u^0_t(\sigma))^2)d\sigma,
	\end{align} and then the embeddings $H^1(\Omega) \hookrightarrow L^6(\Omega)$ and $H^{1/2}(\Omega) \hookrightarrow L^3(\Omega)$ allows us to rewrite \eqref{g7} as \begin{align}
(C_1-\|u^0\|_{L^\infty})\|u^0_t\|_2^2+ \dfrac{c^2}{2}\|\p u^0\|_2^2 &+ \int_0^t \left[b\|\p u^0_t(\sigma)- C_2 \|u^0_t(\sigma)\|_{H^{1/2}}^3\right]d\sigma \nonumber \\ &\leqslant C_1E_{u,0}(0) + k\|u^0(0)\|_{L^\infty}\|u^0_t(0)\|_2^2.
	\end{align}
	
	Next, we take the $L^2$-inner product of \eqref{jmgt0} with $Au^0(t) \in L^2(\Omega)$ we have \begin{align}
	(G(u^0),Au^0) = \dfrac{d}{dt}\left[(u^0_t,Au^0) +\dfrac{b}{2}\|Au^0\|_2^2\right]-\|\p u^0_t\|_2^2 + c^2\|Au^0\|_2^2,
	\end{align} and with a combination of H\"{o}lder's Inequality and Sobololev embeddings we get, after integration by parts, that for every $\varepsilon > 0$, \begin{align}
	C_1\|Au^0\|_2^2 + (c^2-\varepsilon)\int_0^t\|Au^0(\sigma)\|_2^2d\sigma &\leqslant C_2E_{u,0}(0) + \int_0^t \|\p u^0_t(\sigma)\|_2^2d\sigma \nonumber \\ & + C_3\|u^0_t\|_2^2 + C_4(\varepsilon)\int_0^t\left[ \|u^0\|_{L^\infty}^2\|u^0_{tt}(\sigma)\|_2^2 + \|u^0_t(\sigma)\|_{H^{3/4}}^4\right]d\sigma,
	\end{align} where we have used the embeddings $H^1(\Omega) \hookrightarrow L^4(\Omega)$ and $H^{3/4}(\Omega) \hookrightarrow L^2(\Omega)$. Finally, taking the $L^2$-inner product of \eqref{jmgt0} with $u^0_{tt}(t) \in L^2(\Omega)$ we have \begin{align}
	(G(u^0),u^0_{tt}) &= \dfrac{d}{dt} \left[c^2(u^0_t,Au^0)+\dfrac{b}{2}\|\p u^0_t\|_2^2\right]+\|u^0_{tt}\|_2^2-c^2\|\p u^0_t\|_2^2,
	\end{align} and then integrating from $0$ to $t \in [0,T]$ we have \begin{align}
	\int_0^t(1-2k\|Au^0(t)\|^{1/2} \|A^{1/2} u^0(t) \|^{1/2} )\|u^0_{tt}(\sigma)\|_2^2d\sigma &+ C_1\left[\|\p u^0_t(t)\|_2^2 - \|u^0_t(t)\|_{H^{1/2}}^3\right] \nonumber \\ &\leqslant C_2 E_{u,0}(0) + C_3\left[\int_0^t\|\p u^0(\sigma)\|_2^2d\sigma + \|\p u^0\|_2^2\right],
	\end{align} where we have used the embedding $H^{1/2}(\Omega) \hookrightarrow L^3(\Omega)$. Using the inequalities
	$||u||^2_{L^{\infty} } \leq C ||A u||^{1/2} ||A^{1/2} u||  \leq E_u^{1/2} \E_u^{1/2} $,  and $\|u^0_t\|_{H^{1/2}} \leqslant \|u^0_t\|_2^{1/2}\|\p u^0_t\|_2^{1/2}$ -- both dominated above by $E_{u,0}^{1/4}\E_u^{1/4}$ -- and adding the expressions generated by each multiplier, the estimate \eqref{lime} follows. The exponential decay follow from the given estimates  in a standard manner via Barrier's Method where we account for smallness of $E_u$ energy. \end{proof}

Now we are in position to prove  Theorem \ref{cc0}. \begin{proof}[\bf Proof of Theorem \ref{cc0}:] If $u^\tau$ is the solution of \eqref{eqnl} and $u^0$ is the solution \eqref{jmgt0}, then $x^\tau:= u^\tau - u^0$ solves the  following equation:
%Recall the JMGT equation:
%\begin{equation}\label{PDE1}\begin{cases}
%\tau u^\tau_{ttt}+(1-2ku^\tau)u^\tau_{tt} + c^2 Au^\tau + bAu^\tau_t = 2k(u^\tau_t)^2, \\
%u^\tau(0,\cdot) = u_0, u^\tau_t(0,\cdot) = u_1, u^\tau_{tt}(0,\cdot) = u_2.
%\end{cases}
%\end{equation}
%and the limit  Westervelt equation:
%\begin{equation}\label{PDE0}\begin{cases}
%(1-2ku^0)u^0_{tt} + c^2 A u^0 + b A u^0_t = 2k(u^0_t)^2 ,\\
%u^0(0,\cdot) = u_0, u^0_t(0,\cdot) = u_1.
%\end{cases}
%\end{equation}
%Let $x^\tau \equiv  u^\tau - u^0$ where $u^\tau$ and $u^0$ are the solutions for the problems \eqref{PDE1} and \eqref{PDE0} respectively with the same initial values for $u(t=0)$ and $u_t(t=0)$. By  taking the difference of the two problems we can write a $x^\tau-$problem given by 
\begin{equation}\label{df1}
x^\tau_{tt}
+c^{2}Ax^\tau+bAx^\tau_{t}=-\tau u^\tau_{ttt}+2ku^\tau_{t}x^\tau_{t}+2ku^0_tx^\tau_{t}+2ku^\tau x^\tau_{tt}+2ku^0_{tt}x^\tau
\end{equation} with the  zero initial conditions. Let $L(t)$ and $R(t)$ denote the left hand side and right hand side of \eqref{df1}. We start by taking the $L^2$-inner product of \eqref{df1} with $Ax^\tau_t$ and $x^\tau_{tt}$, which -- for the left hand side -- give\begin{align}\label{l0}
\int_0^t(L(\sigma),Ax^\tau(\sigma))d\sigma &= \dfrac{1}{2}\|\p x^\tau_t\|_2^2 + \dfrac{c^2}{2}\|Ax^\tau\|_2^2 + b\int_0^t\|Ax^\tau_t(\sigma)\|_2^2d\sigma
\end{align} \begin{align}\label{l1}
\int_0^t(L(\sigma),x^\tau_{tt}(\sigma)) &= \dfrac{b}{2}\|\p x^\tau_t\|_2^2 +\int_0^t\|x^\tau_{tt}(\sigma)\|_2^2d\sigma + c^2\int_0^t(Ax^\tau(\sigma),x^\tau_{tt}(\sigma))d\sigma
\end{align} and, for every $\varepsilon >0,$ the right hand sides give \begin{align}
\label{l2} \int_0^t(R(\sigma),Ax^\tau_t(\sigma))d\sigma = \int_0^t\left(-\tau u^\tau_{ttt}+2ku^\tau_{t}x^\tau_{t}+2ku^0_tx^\tau_{t}+2ku^\tau x^\tau_{tt}+2ku^0_{tt}x^\tau,Ax^\tau_t\right)d\sigma \nonumber \\ \leqslant \tau\int_0^t(u^\tau_{ttt}(\sigma),Ax_t^\tau(\sigma))d\sigma + \varepsilon \int_0^t \|Ax^\tau_t(\sigma)\|_2^2d\sigma + C(\varepsilon)\int_0^t\left[\|u^\tau_t x^\tau_t\|_2^2 + \|u^\tau x^\tau_{tt}\|_2^2 + ||u^0_{tt}x^\tau\|_2^2 +||u_t^0 x_t||^2  \right]d\sigma
\end{align} and \begin{align}
\label{l3} \int_0^t(R(\sigma),x^\tau_{tt}(\sigma))d\sigma = \int_0^t\left(-\tau u^\tau_{ttt}+2ku^\tau_{t}x^\tau_{t}+2ku^0_tx^\tau_{t}+2ku^\tau x^\tau_{tt}+2ku^0_{tt}x^\tau,x^\tau_{tt}\right)d\sigma \nonumber \\ \leqslant \tau\int_0^t(u^\tau_{ttt}(\sigma),x^\tau_{tt}(\sigma))d\sigma + \varepsilon \int_0^t \|x^\tau_{tt}(\sigma)\|_2^2d\sigma + C(\varepsilon)\int_0^t\left[\|u^\tau_t x^\tau_t\|_2^2 + \|u^\tau x^\tau_{tt}\|_2^2 + ||u^0_{tt}x^\tau\|_2^2 +||u_t^0 x_t||^2 \right]d\sigma.
\end{align} Moreover, notice that by Lemma \ref{g1}  \begin{align}\label{rc1}
\int_{0}^{t}(\tau u^\tau_{ttt}(\sigma),Ax^\tau_{t}(\sigma) + x^{\tau}_{tt}(\sigma)  )d\sigma
&\leqslant \varepsilon\int_0^t [\|Ax^{\tau}_t(\sigma)\|_2^2 +\|x^{\tau}_{tt}(\sigma)\|_2^2] d\sigma + C_1(\varepsilon)\int_0^t \|\tau u^\tau_{ttt}(\sigma)\|_2^2d\sigma \nonumber \\ &\leqslant \varepsilon\int_0^t [\|Ax^{\tau}_t(\sigma)\|_2^2 +\|x^{\tau}_{tt}(\sigma)\|_2^2] d\sigma + \tau C_1(\varepsilon)(\|U_0\|_{\Hc}^2),
\end{align}  Then, adding  the corresponding sides, equating left and right  and taking $\varepsilon$ small we have \begin{align}
\label{l4} \E_{x^\tau}(t) + C_1\int_0^t[\|x^\tau_{tt}(\sigma)\|_2^2 &+ \|Ax^\tau_t(\sigma)\|_2^2]d\sigma \leqslant C_2 \int_0^t\|Ax^\tau(\sigma)\|_2^2d\sigma  +\tau C_3 (\|U_0\|_{\Hc}^2)  \nonumber \\ &+ C_4\int_0^t\left[\|u^\tau_t x^\tau_t\|_2^2 + \|u^\tau x^\tau_{tt}\|_2^2 +|| u^0_{tt}x^\tau\|_2^2 + ||u_t^0 x_t||^2 \right]d\sigma.
\end{align}
	We now estimate the four nonlinear terms. We have, \begin{align*}
	\int_0^t\|u^\tau(\sigma)x^\tau_{tt}(\sigma)\|_2^2d\sigma \leqslant  \int_0^t\|u^\tau\|_{L^\infty}^2 \|x^\tau_{tt}(\sigma)\|_2^2d\sigma &\leqslant C_1\int_0^t\|\p u^\tau\|_2\|Au^\tau\|_2 \|x^\tau_{tt}(\sigma)\|_2^2d\sigma \\ &\leqslant C_1[E^\tau(0)]^{1/2}[\E^\tau(0)]^{1/2}\int_0^t \|x^\tau_{tt}(\sigma)\|_2^2d\sigma.
	\end{align*} 
%	\begin{eqnarray}
%	\int_0^t |u^{\tau}x_{tt}|^2 ds \leq |u^{\tau} |^2_{L_{\infty}(L_{\infty}) }\int_0^t |x_{tt}|^2 ds \leq C sup_{s\leq t} |A^{1/2} u^{\tau}(s) ||Au^{\tau} (s)| \int_0^t |x_{tt}|^2 \nonumber \\
%	\leq c E^{1/2}(0) \E^{1/2} (0) \int_0^t |x_{tt}|^2 ds 
%	\end{eqnarray}
	where we have used a priori bounds for $u^{\tau}$ resulting from Theorem \ref{thm1}.  The later controls small  $\mathbb{H}_0$ norms of solutions in terms of  small initial data in that space (along with the  bounded (not small) $\mathbb{H}_1$-- norms.)
	Similarly,
	\begin{align*}
	\int_0^t \|u^0_{tt}(\sigma) x^\tau (\sigma)\|_2^2 d\sigma \leqslant \int_0^t \|x^\tau\|^2_{L^{\infty}}\|u^0_{tt}(\sigma)\|_2^2 d\sigma \leqslant C_1\int_0^t \|A^{1/2}x^\tau(\sigma)\|_2\|A x^\tau(\sigma)\|_2\|u_{tt}^0\|_2^2d\sigma  \nonumber \\
	\leqslant \varepsilon \sup\limits_{\sigma \in [0,T]}\|Ax^\tau(\sigma)\|_2^2\left(\int_0^t |u^0_{tt}(\sigma)|^2d\sigma\right)^2 + C_2(\varepsilon) \sup\limits_{\sigma\in [0,T]}\|A^{1/2}x^\tau(\sigma)\|_2^2,
	\end{align*}
	\begin{align*}
	\int_0^t \|u^\tau_t(\sigma)x^\tau_t(\sigma)\|_2^2d\sigma \leqslant \int_0^t \|x^\tau_t\|_{L^{\infty}}^2 \|u^\tau_t(\sigma)\|_2^2d\sigma \leqslant  C_1[E^\tau(0)]^{1/2}[\E^\tau(0)]^{1/2}\int_0^t \|Ax^\tau_t(\sigma)\|_2^2d\sigma,
	\end{align*} and finally \begin{align*}
	\int_0^t \|u_t^0(\sigma) x^\tau_t(\sigma)\|_2^2 ds \leqslant \int_0^t \|x^\tau_t\|_{L^{\infty} }^2 \|u_t^0(\sigma)\|_2^2d\sigma \leqslant C_1[E^\tau(0)]^{1/2}[\E^\tau(0)]^{1/2}\int_0^t \|Ax^\tau_t(\sigma)\|_2^2d\sigma.
	\end{align*}
	
	Now, taking $\varepsilon$ small and accounting, also, for smallness of $E^\tau(0)$ we rewrite \eqref{l4} as \begin{align}
	\label{l5} ||A x^{\tau}(t) ||^2_2 + ||A^{1/2} x^{\tau}_t (t)||^2_2 + C_1\int_0^t[\|x^\tau_{tt}(\sigma)\|_2^2 &+ \|Ax^\tau_t(\sigma)\|_2^2]d\sigma \leqslant \tau C_2 (\|U_0\|_{\Hc}^2) + C_3 \int_0^t\E_{x^\tau}(\sigma)d\sigma,
	\end{align} where we have used the fact that
	$$\|A^{1/2}x^\tau(t)\|_2^2 \leq  2 \int_0^t \|A^{1/2} x^\tau_t(\sigma)\|_2\|A^{1/2}x^\tau(\sigma)\|_2 d\sigma \leqslant C_1 \int_0^t [\|A^{1/2} x^\tau_t(\sigma)\|_2^2 + \|A^{1/2}x^\tau\|_2^2]d\sigma.$$ The final estimate then follows by Grownwall's Inequality, that is, \begin{align}\label{df3nnnn}
	 ||A x^{\tau}(t) ||^2_2 + ||A^{1/2} x^{\tau}_t (t)||^2_2 +C_1\int_0^t [\|x^{\tau}_{tt}(\sigma)|^2  + \|A x^{\tau}_t(\sigma)\|_2^2]d\sigma \leqslant \tau e^{\omega t}
	% \int_0^t |Ax^{\tau}|^2   \nonumber\\
	C_2 (\|U_0\|_{\Hc}^2) 
	%+ C 
	% c \int_0^t |A^{1/2} x_t|^2 + |A^{1/2} x |^2  ds 
	%\int_0^t [ |u_t^{\tau} x^{\tau}_t|^2+ |u^0_t x^{\tau}_t|^2 +| u^{\tau} x_{tt}|^2 + |u_{tt}^0 x |^2 ]ds
	\end{align}
%	Estimating the last term on the RHS by the results of the Lemma we obtain '
%	
%	\begin{align}\label{df3nnnnn}
%	|A^{1/2} x^{\tau}_{t}(t)|^2+ |A x^{\tau} (t) |^2  + \int_0^t [|x^{\tau}_{tt}|^2  + |A x^{\tau}_t|^2 ] ds  \leq  C  e^{\omega t}
%	\tau C (|U_0|_{\mathcal{H}_2} )
%	%% \int_0^t |Ax^{\tau}|^2   \nonumber\\
%	% \int_{0}^{t}|\tau u^\tau_{ttt}(\sigma)|^2 d\sigma   
	%+ C 
	% c \int_0^t |A^{1/2} x_t|^2 + |A^{1/2} x |^2  ds 
	%\int_0^t [ |u_t^{\tau} x^{\tau}_t|^2+ |u^0_t x^{\tau}_t|^2 +| u^{\tau} x_{tt}|^2 + |u_{tt}^0 x |^2 ]ds
%	\end{align}
	This completes the proof of  the rate of convergence. We reiterate that these results hold under the assumption that the energy is $\Ha$-small and $\Hc$-finite.%\end{proof}
	 
	\begin{rmk}
		Note that the proof of convergence rates holds on every fixed time interval. In order to obtain the convergence on the entire $\mathbb{R}^+$ we need to appeal to the uniform decay rates of the solutions of limiting and the limit problems.
	\end{rmk}
	\ifdefined\xxxx

	Then from \eqref{rc1} with Sobolev's embeddings
	%\begin{align}
	%\int_{0}^{t}(\tau u^\tau_{ttt}(\sigma),x^\tau_{t}(\sigma))d\sigma
	%&\leqslant\dfrac{\tau}{2}(2\tilde{M}+1)\mathfrak{E}(0)+\dfrac{1}{4}|x^\tau_{t}(t)|^2+\dfrac{3\tau}{2}\tilde{M}\mathfrak{E}(0)+\dfrac{\tau}{2}\hat{C}\mathfrak{E}(0),\nonumber\\
	%&\leqslant\dfrac{\tau}{2}\big[2\tilde{M}+1+\hat{C}\big]\mathfrak{E}(0)+\dfrac{1}{4}|x^\tau_{t}(t)|^2.
	%\end{align}
	\begin{align*}
	\int_{0}^{t}(2ku^\tau_{t}(\sigma)x^\tau_{t}(\sigma),x^\tau_{t}(\sigma))d\sigma&\leqslant\int_{0}^{t}4k^2C_\varepsilon |u^\tau_{t}|^2_{H^1}|\p x^\tau_{t}(\sigma)|^2d\sigma+\int_{0}^{t}\varepsilon |x^\tau_{t}(\sigma)|^2d\sigma,\nonumber\\
	&\leqslant \int_{0}^{t}(4k^2C_\varepsilon|u^\tau_t|^2_{H1}+\varepsilon C^*)|\p x^\tau_t(\sigma)|^2d\sigma.
	\end{align*}
	\begin{align*}
	\int_{0}^{t}(2ku^0_{t}(\sigma)x^\tau_{t}(\sigma),x^\tau_{t}(\sigma))d\sigma&\leqslant\int_{0}^{t}4k^2C_\varepsilon |u^0_{t}|^2_{H^1}|\p x^\tau_{t}(\sigma)|^2d\sigma+\int_{0}^{t}\varepsilon |x^\tau_{t}(\sigma)|^2d\sigma,\nonumber\\
	&\leqslant \int_{0}^{t}(4k^2C_\varepsilon|u^0_t|^2_{H_1}+\varepsilon C^*)|\p x^\tau_t(\sigma)|^2d\sigma.
	\end{align*}
	\begin{align*}
	\int_{0}^{t}(2ku^\tau(\sigma) x^\tau_{tt}(\sigma),x^\tau_{t}(\sigma))d\sigma&=\int_{0}^{t}(2ku^\tau(\sigma),\dfrac{d}{dt}(x^\tau_{t})^2)d\sigma,\nonumber\\
	&=(2ku^\tau,(x^\tau_{t})^2)\bigg\vert_0^t-\int_{0}^{t}(2ku^\tau_{t}(\sigma)x^\tau_{t}(\sigma),x^\tau_{t}(\sigma))d\sigma,\nonumber\\
	&\leqslant4k^2\sup|u^\tau|_{L_\infty}| x^\tau_{t}(t)|^2+\int_{0}^{t}(2ku^\tau_{t}(\sigma)x^\tau_{t}(\sigma),x^\tau_{t}(\sigma))d\sigma,\nonumber\\
	&\leqslant4k^2\bar{C}| x^\tau_{t}(t)|^2+\int_{0}^{t}(2ku^\tau_{t}(\sigma)x^\tau_{t}(\sigma),x^\tau_{t}(\sigma))d\sigma,\nonumber\\
	&\leqslant4k^2\bar{C}| x^\tau_{t}(t)|^2+\int_{0}^{t}(4k^2C_\varepsilon|u^\tau_t|^2_{H1}+\varepsilon C^*)|\p x^\tau_t(\sigma)|^2d\sigma.
	\end{align*}
	where the constant $\bar{C}=\sup|u^\tau|_{L_\infty}$
	$$\int_{0}^{t}(2ku^0_{tt}(\sigma)x^\tau(\sigma),x^\tau_{t}(\sigma))d\sigma\leqslant\int_{0}^{t}4k^2C_\varepsilon|u^0_{tt}|^2_{L_2}|\p x^\tau(\sigma)|^2d\sigma+\int_{0}^{t}\varepsilon C^*|\p x^\tau_t(\sigma)|^2d\sigma.$$
	Then we get
	\begin{align}
	\text{RHS}&\leqslant\dfrac{\tau}{2}\big[2\tilde{M}+1+\hat{C}\big]\mathfrak{E}(0)+\dfrac{1}{4}|x^\tau_{t}(t)|^2+4k^2\bar{C}| x^\tau_{t}(t)|^2\nonumber\\
	&\quad+\int_{0}^{t}\bigg[4k^2C_\varepsilon(2|u^\tau_{t}|^2_{H^1}+|u^0_t|^2_{H^1}+4\varepsilon C^*)\bigg]|\p x^\tau_{t}(\sigma)|^2+\int_{0}^{t}4k^2C_\varepsilon|u^0_{tt}|^2_{L_2}|\p x^\tau(\sigma)|^2d\sigma,\nonumber\\
	&\leqslant\dfrac{\tau}{2}\big[2\tilde{M}+1+\hat{C}\big]\mathfrak{E}(0)+(\dfrac{1}{4}+4k^2\bar{C})|x^\tau_{t}(t)|^2+T4k^2C_\varepsilon|u^0_{tt}|^2_{L_2}|\p x^\tau|^2\nonumber\\
	&\quad+\int_{0}^{t}\bigg[4k^2C_\varepsilon(2|u^\tau_{t}|^2_{H^1}+|u^0_t|^2_{H^1}+4\varepsilon C^*)\bigg]|\p x^\tau_{t}(\sigma)|^2.
	\end{align}
	From \eqref{df3} we obtain
	\begin{align}\label{df4}
	(\dfrac{1}{4}-4k^2\bar{C})|x^\tau_{t}(t)|^2&+(\dfrac{c^2}{2}-T4k^2C_\varepsilon|u^0_{tt}|^2_{L_2})|\p x^\tau(t)|^2\nonumber\\
	&+\int_{0}^{t}\bigg[b-4k^2C_\varepsilon(2|u^\tau_{t}|^2_{H^1}+|u^0_t|^2_{H^1}+4\varepsilon C^*)\bigg]|\p x^\tau_{t}(\sigma)|^2d\sigma\nonumber\\
	&\leqslant\dfrac{\tau}{2}\big[2\tilde{M}+1+\hat{C}\big]\mathfrak{E}(0).
	\end{align}
	With smallness of initial data, $\mathfrak{E}(0)\leqslant\rho$ and \eqref{we1}, we have positive constants
	$$\hat{C}_1=\dfrac{c^2}{2}-T4k^2C_\varepsilon|u^0_{tt}|^2_{L_2}\leqslant\dfrac{c^2}{4}.$$
	$$\hat{C}_2=(\dfrac{1}{4}-4k^2\bar{C}).$$
	$$\hat{C}_3=b-4k^2C_\varepsilon(2|u^\tau_{t}|^2_{H^1}+|u^0_t|^2_{H^1}+4\varepsilon C^*)\leqslant\dfrac{b}{2}.$$
	Hence we arrive at 
	\begin{equation}\label{df7a}
	\hat{C}_1|\p x^\tau(t)|^2+\hat{C}_2| x^\tau_{t}(t)|^2\leqslant\dfrac{\tau}{2}\big[2\tilde{M}+1+\hat{C}\big]\mathfrak{E}(0)=\tau\tilde{K}\mathfrak{E}(0)=\tau C(r).
	\end{equation}
	Thus we obtain the rate $\tau\tilde{K}$ and this complete the proof of  theorem \eqref{thm3} part a).
	\fi
	%xxxxxxxxxxxxxxxxxxxxx
	\ifdefined\xxxxxxx
	Now we move our attention to proving  the strong convergence (Part (b) of Theorem \ref{thm3}). We start by recalling that given $\varepsilon > 0$  and  any $T > 0$, there exists $\tau \leqslant \tau_0(\varepsilon, T)$ we have \begin{equation}\label{eq1}
	\|P (U^\tau(t,U_0))- U^0(t,PU_0)\|_{ \D{A}\times \D{\p}}\leqslant\varepsilon, \ \ t \in [0,T]
	\end{equation} for any initial data $U_0\in \mathbb{H}_2$ which are  small in $\mathbb{H}_0$. Our goal is to prove that the above inequality holds for all initial data 
	$U_0. \in \mathbb{H}_1$  which remain  small in $\mathbb{H}_0$ and this is equivalent to showing that  for all $\epsilon > 0 $ there exists $\tau_0 > 0 $ so that 
	 for all $\tau \in (0, \tau_0 ) $  we have $\|(x^\tau,x^\tau_t)\|_{\D{A}\times \D{\p}} \leqslant \varepsilon$ for all $t >0$,  equivalently:
	\begin{equation}\label{eq3}\|A x^\tau(t)\|_2^2 + \|A^{1/2} x^\tau_t(t)\|_2^2 \leqslant  \varepsilon \ \ \mbox{for all } t. >0\end{equation}
	 Showing \eqref{eq3} will make explicit the need for developing existance, uniqueness and careful estimate analysis for three different topologies. The classic argument of \emph{of extending a given inequality by density} cannot be used when (same level) smallness of the data is required because approximation cannot be made by sequences of small elements. This issue has been dealt with, by requiring smallness in a lower topology, which is our setting.
	
	Let $U_0 \in \Hb$ and let $\{U_0^n\}_{n \in \mathbb{N}} \subset \Hc$ such that $U_0^n \to U_0$ in $\Hb$ with renormalized $\Ha$-norm sufficiently small. 
	
	Notice that, for each $n \in \mathbb{N}$ we, by virtue of \eqref{eq1} we have that : for each $\epsilon >0 $ there exists $\tau_0 $ such that for all 
	\begin{align}\label{eq4}
	\|PU^\tau(t,U_0) - U^0(t,PU_0)\|_{\D{A} \times \D{\p}} &\leqslant  \|PU^\tau(t,U_0)-PU^\tau(t,U_0^n)\|_{\D{A} \times \D{\p}} \nonumber + \dfrac{\varepsilon}{3}\\& +\|U^0(t,PU_0^n)-U^0(t,PU_0)\|_{\D{A} \times \D{\p}}. \end{align} We need to show that the two other components of the right hand side of \eqref{eq4} are also small (uniform in $\tau \in (0,\tau_0(t)]$ for the first term). For the first, letting $u_n^\tau$ denote the solution for \eqref{nll} corresponding to initial data $U_0^n$ and $u_0^\tau$ corresponding to initial data $U_0$ we have $w_n^\tau := u_n^\tau-u_0^\tau$ satisfying \eqref{nll} with $G(w_n^\tau)$ given by \begin{align}
	G(w_n^\tau) = {u_n^\tau}_{tt} w_n^\tau  + u_0^\tau({w_n^\tau}_{tt}) + {u_n^\tau}_t{w_n^\tau}_t + {u_0^\tau}_t{w_n^
	\tau}_t,
	\end{align} which needs to be tested against the multipliers ${w_n^\tau}_{tt}, {w_n^\tau}_t, {w_n^\tau}$ and $A{w_n^\tau}.$ Denoting by $\E^\tau_n$, $\E^\tau_0$ the $\Hb$-energy of solutions for \eqref{nll} for $U_0^n$ and $U_0$, respectively, then denoting by $E^\tau_n$ and $E^\tau_{00}$ the $\Ha$-energy of solutions for \eqref{nll} for $U_0^n$ and $U_0$, respectively and finally denoting by $E_{w}^\tau$ and $\E_w^\tau$ the $\Ha$ and $\Hb$-energy of the diference of solutions, respectively, we have, \begin{align*}
	(G({w_n^\tau}),{w_n^\tau}_{tt}) &= ({u_n^\tau}_{tt} w_n^\tau  + u_0^\tau({w_n^\tau}_{tt}) + {u_n^\tau}_t{w_n^\tau}_t + {u_0^\tau}_t{w_n^
		\tau}_t,{w_n^\tau}_{tt}) \\ &\leqslant \|{w_n^\tau}\|_{L^\infty}(C_1(\varepsilon)\|{u_n^\tau}_{tt}\|_2^2 + \varepsilon \|{w_n^\tau}_{tt}\|_2^2) + \|u_0^\tau\|_{L^\infty}\|{w_n^\tau}_{tt}\|_2^2 + \varepsilon \|{w_n^\tau}_{tt}\|_2^2 + C_2(\varepsilon)(\|{u_0^\tau}_t\|_2^2+\|{u_n^\tau}_t\|_2^2)\|{w_n^\tau}_t\|_2^2\\ &\leqslant C_1(\varepsilon)E_n^\tau(0)[\E_w^\tau(\sigma)]^{1/4}[E_w^\tau(\sigma)]^{1/4} + \varepsilon C_2[\E_w^\tau(\sigma)]^{5/4}[E_w^\tau(\sigma)]^{1/4} + C_3[\E_0^\tau(0)]^{1/4}[E_0^\tau(0)]^{1/4}\E_{w}^\tau(\sigma) \\ &+C_4(\varepsilon) (E_{00}^\tau(0)+E_n^\tau(0))\E_w^\tau(\sigma),
	\end{align*} for all $\sigma \in [0,T].$ The exact same reasoning provide similar estimates for the other multipliers. By using smallness of the sequence $U_0^n$ as well as smallness of $U_0$ in $\Ha$ we can show -- following the procedure for showing \eqref{m29} -- that the energy $\E_{w}^\tau$ is such that \begin{align}
	\E_w^\tau(t) + C_1\int_0^t \E^\tau_w(\sigma)d\sigma \leqslant C_2\E^\tau_w(0),
	\end{align} and this implies that the first term of the right hand side of \eqref{eq4} is dominated by $\varepsilon/3$ as long as $n$ is large enough. Very analogous -- therefore non included here -- strategy works also for proving that the last term is dominated by $\varepsilon/3$ for $n$ large. The strong convergence in finite time is then proved.
	
	For the infinity time case we appeal for the uniform (in $\tau$) exponential stability in the following manner: we let $T_\varepsilon$ denote a time big enough such that $$\int_{T_\varepsilon}^\infty \E_{x^\tau}(\sigma)d\sigma \leqslant C \int_{T_\varepsilon}^\infty e^{-\alpha \sigma}d\sigma = -\dfrac{C}{\alpha}e^{-\alpha\sigma}\biggr\rvert_{T_\varepsilon}^\infty = \dfrac{C}{\alpha}e^{-\alpha T_\varepsilon} \leqslant \dfrac{\varepsilon}{2},$$ which is possible since $2\E_{x^t}(t) \leqslant \|Au^\tau(t)\|_2^2 + \|Au^0(t)\|_2^2 + \|\p u^\tau_t(t)\|_2^2 + \|\p u^0_t(t)\|_2^2$ and therefore, uniform stablity is inherited. Finally, by taking $T = T_\varepsilon$ in our finite time proof, the result follows.
	\fi
	
{\bf Strong convergence.} Now we move our attention to proving  the strong convergence claimed in  Part (b) of Theorem \ref{thm3}. We start by recalling that given $\varepsilon > 0$  and  any $T > 0$, there exists $\tau \leqslant \tau_0(\varepsilon, T)$ we have \begin{equation}\label{eq1}
	\|P (U^\tau(t,U_0))- U^0(t,PU_0)\|_{ \D{A}\times \D{\p}}\leqslant \varepsilon/3 , \ \ t \in [0,T]
	\end{equation} for any initial data $U_0\in \mathbb{H}_2$ which are  small in $\mathbb{H}_0$. Our goal is to prove that the above inequality holds for all initial data 
	$U_0. \in \mathbb{H}_1$  which are small in $\mathbb{H}_0$ and this is equivalent to showing 
	%that $\|(x^\tau,x^\tau_t)\|_{\D{A}\times \D{\p}} \leqslant \varepsilon$ for all $t >0$, or  equivalently:
	\begin{equation}\label{eq3}\|A x^\tau(t)\|_2^2 + \|A^{1/2} x^\tau_t(t)\|_2^2 \leqslant  \varepsilon \ \ \mbox{for all } ~\tau\in \Lambda ,  t. >0\end{equation}
	\begin{rmk} Proving \eqref{eq3} as valid for the initial data in $\mathbb{H}_1 $   depends on  the density of $\mathbb{H}_2 \subset \mathbb{H}_1 $. 
	It is essential here that the smallness of initial data is required only in $\mathbb{H}_0 $ -since otherwise smallness of the approximation  in $\mathbb{H}_2 $ topology   can not be guaranteed without having  the smallness of the element  approximated. This is the part where the fact that the analysis requires only $\mathbb{H}_0$ smallness is critical. \end{rmk}
	
%This is overcame, however, by requiring smallness in a lower topology, which is our setting.
	
	%Let $U_0 \in \Hb$ and let $\{U_0^n\}_{n \in \mathbb{N}} \subset \Hc$ such that $U_0^n \to U_0$ in $\Hb^{\tau}$ with renormalized $\Ha$-norm sufficiently small. 
	The convergence stated  below follows essentially from the proofs of Theorem \ref{thm1} and Theorem \ref{t0}. For reader's convenience we will sketch the main  ingredients of the argument. 
	  \begin{lemma}\label{conv2}
		Let $U_0 \in \Hb$  with $||U_0||_{\mathbb{H}_0 } \leq \rho$ ; and $\{U_0\}_{n \in \mathbb{N}} \subset \Hc$ such that $\|U_0^n\|_{\Ha} \leqslant \ C\rho, $ with $\rho>0$ sufficiently small, and $U_0^n \to U_0$ in $\Hb$ as $n \to \infty.$ Then,  as $n \rightarrow \infty $ 
	we have \begin{itemize}
			\item[\emph{a)}] $PU^\tau(t,U_0^n) \to PU^\tau(t,U_0)$ in $P\Hb = \D{A} \times \D{\p}$, for every $\tau \in (0,\tau_0]$ and every $t \in [0,T], \ T > 0.$
			
			\item[\emph{b)}] $U^0(t,PU_0^n) \to U^0(t,PU_0)$ in $P\Hb = \D{A} \times \D{\p}$, for every $t \in [0,T], \ T > 0.$
		\end{itemize}
	\end{lemma}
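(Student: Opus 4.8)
The plan is to reduce both statements to a single energy estimate for the \emph{difference} of two nonlinear flows issued from nearby data, mirroring the stabilizability argument already carried out in Section~\ref{unlow} for Theorem~\ref{thm1} (for part~a) and in Theorem~\ref{t0} (for part~b). For part~(a), write $u_n^\tau$ and $u_0^\tau$ for the solutions of \eqref{eqnl} with data $U_0^n$ and $U_0$, and set $w^\tau := u_n^\tau - u_0^\tau$. Subtracting two copies of \eqref{nll} and using the bilinear identities $u_n^\tau u^\tau_{n,tt} - u_0^\tau u^\tau_{0,tt} = u^\tau_{n,tt}\,w^\tau + u_0^\tau\,w^\tau_{tt}$ and $(u^\tau_{n,t})^2 - (u^\tau_{0,t})^2 = (u^\tau_{n,t}+u^\tau_{0,t})\,w^\tau_t$, one sees that $w^\tau$ solves \eqref{nll} with forcing
$$
G(w^\tau) = 2k\big[u^\tau_{n,tt}\,w^\tau + u_0^\tau\,w^\tau_{tt} + (u^\tau_{n,t}+u^\tau_{0,t})\,w^\tau_t\big],
$$
which is now \emph{linear} in $w^\tau$ with coefficients drawn from the two solutions, and with initial datum $w^\tau(0)=U_0^n-U_0$.

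Next I would run the multiplier scheme of Section~\ref{unlow} verbatim on $w^\tau$ --- testing against $w^\tau_{tt}+c^2b^{-1}w^\tau_t$, then $w^\tau_t$, $w^\tau$ and $Aw^\tau$ --- to build the $\Hb$-energy $\E^\tau_{w}$ of the difference. The only new point is the estimation of $G(w^\tau)$, where each coefficient is either \emph{small in the lowest topology} or \emph{bounded in the higher one}, and the two roles are handled differently. The genuinely dangerous term $(u_0^\tau w^\tau_{tt},\,w^\tau_{tt})\le \|u_0^\tau\|_{L^\infty}\|w^\tau_{tt}\|_2^2$ is absorbed into the good term $\gamma^\tau\|w^\tau_{tt}\|_2^2$ from the energy identity \eqref{id1}, because $\|u_0^\tau\|_{L^\infty}\le C\|u_0^\tau\|_{H^1}^{1/2}\|u_0^\tau\|_{H^2}^{1/2}$ is small: the $\Ha$-smallness of $U_0$ propagates along the trajectory by \eqref{lower} of Remark~\ref{imp}, while $\|u_0^\tau\|_{H^2}$ stays bounded by Theorem~\ref{thm1}. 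The remaining terms, e.g.\ $(u^\tau_{n,tt}w^\tau,\,w^\tau_{tt})\le \|u^\tau_{n,tt}\|_2\,\|w^\tau\|_{L^\infty}\|w^\tau_{tt}\|_2$, carry a merely \emph{bounded} factor (here $\|u^\tau_{n,tt}\|_2$ is controlled by the uniform $\Hb$ a priori bound of Theorem~\ref{thm1}) paired with lower-order factors of $w^\tau$ dominated by $\E^\tau_w$; after a Young split, with the $\|w^\tau_{tt}\|_2^2$ piece absorbed as above, they feed only the integral $\int_0^t \E^\tau_w$. All constants are $\tau$-independent since $\gamma^\tau$ is bounded below by a positive constant for $\tau\in(0,\tau_0]$ and the a priori bounds of Theorems~\ref{thm1}--\ref{thm2} are uniform in $\tau$. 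This yields
$$
\E^\tau_{w}(t)\le C_2\,\E^\tau_{w}(0) + C_1\int_0^t \E^\tau_{w}(\sigma)\,d\sigma,
$$
whence $\sup_{t\in[0,T]}\E^\tau_{w}(t)\le C_2 e^{C_1 T}\E^\tau_{w}(0)$ by Gronwall, uniformly in $\tau$ and $n$.

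To conclude~(a) I note that $\E^\tau_w(0)\approx \|U_0^n-U_0\|_{\Hb^\tau}^2 \le C\|U_0^n-U_0\|_{\Hb}^2 \to 0$ as $n\to\infty$, the $\tau$-weight on the top component being harmless since $\tau\le\tau_0$; thus $PU^\tau(t,U_0^n)\to PU^\tau(t,U_0)$ in $\D{A}\times\D{\p}$ uniformly on $[0,T]$ and uniformly in $\tau\in(0,\tau_0]$. Part~(b) follows the identical template with the third-order machinery replaced by the second-order Westervelt estimates of Theorem~\ref{t0}: the difference $y := u^0(t,PU_0^n)-u^0(t,PU_0)$ solves \eqref{jmgt0} linearized about the two limit solutions, its energy $\E_{y}\approx\|Ay\|_2^2+\|\p y_t\|_2^2$ is controlled by the multipliers $Ay$, $y_t$, $y_{tt}$ of Theorem~\ref{t0}, the small-coefficient terms are absorbed using the low-topology smallness of the data (guaranteed by $\|U_0^n\|_{\Ha}\le C\rho$), and Gronwall on $[0,T]$ gives $\E_{y}(t)\le C e^{CT}\E_{y}(0)$ with $\E_{y}(0)\to 0$.

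I expect the main obstacle to be keeping every constant uniform in $\tau$ while handling the coupling term $u_0^\tau w^\tau_{tt}$: one must never estimate $\|w^\tau_{tt}\|_2$ through $\tau^{-1/2}[\E^\tau_w]^{1/2}$, but instead always pair it against the sign-definite $\gamma^\tau\|w^\tau_{tt}\|_2^2$ furnished by \eqref{id1} --- precisely the structural feature exploited in Theorem~\ref{thm1}, which is what lets the $\Ha$-smallness alone (rather than $\Hb$-smallness) close the absorption. The second delicate point, flagged in the remark preceding the lemma, is that the whole scheme is consistent only because smallness is demanded in $\Ha$: the hypothesis $\|U_0^n\|_{\Ha}\le C\rho$ is automatically compatible with $U_0^n\to U_0$ in $\Hb$, since $\Hb\hookrightarrow\Ha$ forces $\|U_0^n\|_{\Ha}\to\|U_0\|_{\Ha}\le\rho$, whereas the same could not be arranged in the $\Hc$ topology without already assuming the approximant small there.
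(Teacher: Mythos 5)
Your proposal is correct and follows essentially the same route as the paper: the same difference equation with forcing linear in $w^\tau$ (coefficients drawn from the two trajectories), the same $\Hb$-level multipliers, the same use of propagated $\Ha$-smallness to absorb the critical $u_0^\tau w^\tau_{tt}$ coefficient term into the $\gamma^\tau\|w^\tau_{tt}\|_2^2$ dissipation, and part (b) obtained by rerunning the Theorem \ref{t0} machinery on the Westervelt difference. The only divergence is the closing step: the paper absorbs \emph{all} nonlinear contributions into the dissipation and obtains the $T$-uniform stabilizability inequality $\E^\tau[w_n^\tau](t)+C_1\int_0^t\E^\tau[w_n^\tau](\sigma)\,d\sigma\leqslant C_2\|U_0^n-U_0\|_{\Hb}^2$ with no Gronwall, whereas you let the bounded-coefficient terms feed the time integral and conclude by Gronwall with a constant $e^{C_1T}$ --- a slightly weaker bound that is nonetheless fully sufficient for the finite-interval statement of the lemma.
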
 \begin{proof}
	Let $u_n^\tau(t) := U^\tau(t,U_0^n)$, $u_0^\tau(t):= U^\tau(t,U_0)$ and  $w_n^\tau := u_n^\tau-u_0^\tau.$ Notice that $w_n^\tau$ satisfies \eqref{nll} with $G(w_n^\tau)$ given by \begin{align}
	G(w_n^\tau) = {u_n^\tau}_{tt} w_n^\tau  + u_0^\tau({w_n^\tau}_{tt}) + ({u_n^\tau+u_0^t}_t){w_n^\tau}_t 
	\end{align} which needs to be tested against the multipliers ${w_n^\tau}_{tt}, {w_n^\tau}_t, {w_n^\tau}$ and $A{w_n^\tau}.$ First notice that for all $\varepsilon>0$ and $f \in L^2(\Omega)$ we have\begin{align*}
	(G(w_n^\tau),f) \leqslant C(\varepsilon)\left[\|{u_n^\tau}_{tt} w_n^\tau\|_2^2  + \|u_0^\tau({w_n^\tau}_{tt})\|_2^2 + \|({u_n^\tau+u_0^t}_t){w_n^\tau}_t\|_2^2 \right] + \varepsilon \|f\|_2^2 \\  \leqslant C(\varepsilon)(\|U_0^n\|_{\Ha}^2 + \|U_0\|_{\Ha}^2) \|({w_n^\tau},{w_n^\tau}_t,{w_n^\tau}_{tt})\|_{\Hb}^2 + \varepsilon\|f\|_2^2 \leqslant C(\varepsilon)\rho^2 \|({w_n^\tau},{w_n^\tau}_t,{w_n^\tau}_{tt})\|_{\Hb}^2 + \varepsilon\|f\|_2^2,
	\end{align*} therefore, all the nonlinear terms can be estimated above by $(C(\varepsilon)\rho^2 + \varepsilon)\|({w_n^\tau},{w_n^\tau}_t,{w_n^\tau}_{tt})\|_{\Hb}^2$ 
	[the terms $\epsilon ||f||^2_2 $ with $f $ equal  to one of $u_{tt}, Au, u_t $ are absorbed by the dissipation 
 due to positivity of  $\gamma^\tau, b, c > 0$ -- see steps 1 to 4 of the proof of Theorem \ref{thm1} on Section \ref{unlow}) which can absorb the energy terms since we have smallness of $\rho$ and we can choose $\varepsilon$ small. Therefore, $\E^\tau[w_n^\tau]$ (energy in $\Hb$ with respect to $w_n^\tau$) is such that \begin{align}
	\E^\tau[w_n^\tau](t) + C_1\int_0^t \E^\tau[w_n^\tau](\sigma)d\sigma \leqslant C_2\|U_0^n - U_0\|^2_{\Hb},
	\end{align} which implies part a). Part b) follows by  following  the proof of Theorem \ref{t0}  applied to equation \eqref{jmgt0}.
\end{proof} 
To finalize the proof of Theorem \ref{thm3} we need one more step based on the diagonal argument.

Let $U_0 \in \Hb$, $||U_0||_{\mathbb{H}_0} \leq \rho $  and let $\{U_0^n\}_{n \in \mathbb{N}} \subset \Hc$ such that $U_0^n \to U_0$ in $\Hb$ with  $\Ha$-norm sufficiently small $\leq C \rho $. Combining \eqref{eq1} and Lemma \ref{conv2} we obtain:\begin{align}\label{eq4}
	\|PU^\tau(t,U_0) - U^0(t,PU_0)\|_{\D{A} \times \D{\p}} \leqslant  \|PU^\tau(t,U_0)-PU^\tau(t,U_0^n)\|_{\D{A} \times \D{\p}} \nonumber \\  + \|PU^\tau(t,U_0^n)-PU^0(t,U_0^n)\|_{\D{A} \times \D{\p}} +\|U^0(t,PU_0^n)-U^0(t,PU_0)\|_{\D{A} \times \D{\p}}  \end{align} The strong convergence in finite time is then proved.

	For a given $\epsilon >0 $ we select $N$ (Lemma \ref{conv2}) such that  for all $t \in [0, T ] $
	\begin{eqnarray}\label{c1}
	||P U^{\tau} (t, U_0^N) - PU^{\tau} (t, U_0) ||_{ \D{A}\times \D{\p}} \leqslant \frac{\varepsilon}{3} , \ \tau\in \Lambda \nonumber \\
	||P U^{0} (t, U_0^N) - PU^{0} (t, U_0) ||_{ \D{A}\times \D{\p}} \leqslant \frac{\varepsilon}{3}.
	\end{eqnarray}
	%Inequalities in (\ref{c1}) and (\ref{c2}) follow from   continuity  of the flows  $ P U^{\tau} (t, U_0) $  and $U^{0} (t, PU_0) $ with respect to $U_0$. 
	 The continuity of $ P U^{\tau} (t, U_0) $ is uniform in $\tau \in \Lambda $, 
	 %The uniform [in $\tau$ ]   continuity of the flows with respect to initial data is shown in the same manner as the Proof of Theorem \ref{thm1}. For readers convenience we attach the argument in the Appendix. 
\ifdefined\xxxxxx
	Notice that the continuity of solutions with respect to $\Hb$-data (see \eqref{m29}) implies that, as $n \to \infty$, $U^\tau(t,U_0^n) \to U^\tau(t,U_0)$ in $\Hb$ for all $t \in [0,T]$ and $\tau \in (0,\tau_0(T)].$ Moreover, the error estimate \eqref{df3nnnn} implies that, as $n \to \infty$, $U^0(t,PU_0^n) \to U^0(t,PU_0)$ in $P\Hb = \D{A} \times \D{\p}$ for all $t \in [0,T].$ The rest of the argument -- for strong convergence in finite time -- follows in a standard manner. In fact, with $\varepsilon > 0$ small and fixed, $T>0$ arbritraty there exist $\tau_0(\varepsilon,T)$ and $n \in \mathbb{N}$ large such that
	\fi
	hence 
	 \begin{align}
	\|PU^\tau(t,U_0) - U^0(t,PU_0)\|_{\D{A} \times \D{\p}} \leqslant  \|PU^\tau(t,U_0)-PU^\tau(t,U_0^N)\|_{\D{A} \times \D{\p}} \nonumber \\ + \|PU^\tau(t,U_N) - U^0(t,PU_0^NN)\|_{\D{A} \times \D{\p}}\nonumber +\|U^0(t,PU_0^N)-U^0(t,PU_0)\|_{\D{A} \times \D{\p}} \leqslant \dfrac{\varepsilon}{3}+\dfrac{\varepsilon}{3}+\dfrac{\varepsilon}{3} = \varepsilon,
	\end{align} for all $t \in [0,T]$ and $\tau \in (0,\tau_0(\varepsilon,T)]$, where $\eqref{eq1}$ were used in the middle term. 
	
	For the infinity time case we appeal for the uniform (in $\tau$) exponential stability in the following manner: we let $T_\varepsilon$ denote a time big enough such that $$\int_{T_\varepsilon}^\infty \E_{x^\tau}(\sigma)d\sigma \leqslant C \int_{T_\varepsilon}^\infty e^{-\alpha \sigma}d\sigma ||U_0||_{\mathbb{H}_1}  \leqslant \dfrac{\varepsilon}{2},$$ which is possible since $2\E_{x^t}(t) \leqslant \|Au^\tau(t)\|_2^2 + \|Au^0(t)\|_2^2 + \|\p u^\tau_t(t)\|_2^2 + \|\p u^0_t(t)\|_2^2$ and therefore, uniform stability is inherited from the uniform stability asserted by Theorems \ref{thm1} and Theorem \ref{t0}.  Finally, by taking $T = T_\varepsilon$ in our finite time proof, the result of Theorem \ref{thm3}  follows.
	\end{proof}
	\ifdefined\xxxxx
	\section{Appendix}
	In what follows we use the following Lemma: \begin{lemma}\label{conv2}
		Let $U_0 \in \Hb$ and $\{U_0\}_{n \in \mathbb{N}} \subset \Hc$ such that $\|U_0^n\|_{\Ha} \leqslant \rho, $ with $\rho>0$ sufficiently small, and $U_0^n \to U_0$ in $\Hb$ as $n \to \infty.$ Then, if $P$ denotes projection on the first two coordinates, as $n \to \infty$ we have \begin{itemize}
			\item[\emph{a)}] $PU^\tau(t,U_0^n) \to PU^\tau(t,U_0)$ in $P\Hb = \D{A} \times \D{\p}$, for every $\tau \in (0,\tau_0]$ and every $t \in [0,T], \ T > 0.$
			
			\item[\emph{b)}] $U^0(t,PU_0^n) \to U^0(t,PU_0)$ in $P\Hb = \D{A} \times \D{\p}$, for every $t \in [0,T], \ T > 0.$
		\end{itemize}
	\end{lemma}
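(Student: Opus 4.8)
The plan is to read both parts as \emph{continuous dependence on the initial data} statements and to exploit the fact that the two trajectories being compared issue from data that are close in $\Hb$ while remaining uniformly small in the lowest topology $\Ha$. This $\Ha$-smallness is exactly what forces the quadratic terms to enter with a small coefficient, so that they are absorbed by the dissipation already extracted in the proof of Theorem~\ref{thm1}. For part (a) I would set $u_n^\tau(t):=U^\tau(t,U_0^n)$, $u_0^\tau(t):=U^\tau(t,U_0)$ and $w_n^\tau:=u_n^\tau-u_0^\tau$. Subtracting the two copies of \eqref{nll} shows that $w_n^\tau$ again solves a JMGT-type equation
\[
\tau (w_n^\tau)_{ttt}+(w_n^\tau)_{tt}+c^2 A w_n^\tau + b A (w_n^\tau)_t = G(w_n^\tau),
\]
with zero initial data, where $G(w_n^\tau)=2k\big[(u_n^\tau)_{tt}\,w_n^\tau + u_0^\tau (w_n^\tau)_{tt} + \big((u_n^\tau)_t+(u_0^\tau)_t\big)(w_n^\tau)_t\big]$ is \emph{bilinear} in $(u_n^\tau,u_0^\tau)$ and $w_n^\tau$ together with their time derivatives. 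I would then run the multiplier scheme of Steps~1--4 of Theorem~\ref{thm1} (testing against $(w_n^\tau)_{tt}+c^2b^{-1}(w_n^\tau)_t$, against $(w_n^\tau)_t$, and against $\lambda w_n^\tau$) so as to recover the full $\Hb$-energy, denoted $\E^\tau[w_n^\tau]$.

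The decisive estimate is that, for each $\varepsilon>0$ and every relevant multiplier $f\in\{(w_n^\tau)_{tt},\p (w_n^\tau)_t,\p w_n^\tau\}$,
\[
(G(w_n^\tau),f)\ \le\ C(\varepsilon)\big(\|U_0^n\|_{\Ha}^2+\|U_0\|_{\Ha}^2\big)\,\big\|\big(w_n^\tau,(w_n^\tau)_t,(w_n^\tau)_{tt}\big)\big\|_{\Hb}^2+\varepsilon\|f\|_2^2 ,
\]
which I would obtain from the Sobolev embedding $H^2\hookrightarrow L^\infty$ and the interpolation bounds used in \eqref{m3}--\eqref{m7}, together with the a priori control $\|u_n^\tau\|_{\Ha},\|u_0^\tau\|_{\Ha}\le C\rho$ propagated in time by Theorem~\ref{thm1}. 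Because $\|U_0^n\|_{\Ha},\|U_0\|_{\Ha}\le C\rho$, the coefficient $C(\varepsilon)\rho^2+\varepsilon$ is small, so after fixing $\varepsilon$ and $\rho$ small the forcing is absorbed by the dissipation coming from $\gamma^\tau,b,c^2>0$ (exactly the mechanism of \eqref{m19}). This produces the stabilizability inequality
\[
\E^\tau[w_n^\tau](t)+C_1\int_0^t \E^\tau[w_n^\tau](\sigma)\,d\sigma\ \le\ C_2\,\|U_0^n-U_0\|_{\Hb}^2 ,
\]
with $C_1,C_2$ independent of $\tau$. Since $U_0^n\to U_0$ in $\Hb$, the right-hand side tends to $0$, and as $\|PU^\tau(t,U_0^n)-PU^\tau(t,U_0)\|_{\D{A}\times\D{\p}}^2\le \E^\tau[w_n^\tau](t)$, this gives (a) uniformly on $[0,T]$.

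For part (b) I would repeat the argument for the limit equation \eqref{jmgt0}: with $y_n:=u^0(\cdot,PU_0^n)-u^0(\cdot,PU_0)$, subtracting the two Westervelt equations yields a strongly damped wave equation for $y_n$ with a forcing that is again bilinear in the two limit trajectories and $y_n$. Testing with the multipliers of Theorem~\ref{t0} ($y_{n,tt}$, $Ay_n$, $\p y_{n,t}$) and using the $\Ha$-smallness to control the nonlinearity gives the analogous bound $\|(y_n,y_{n,t})(t)\|_{\D{A}\times\D{\p}}^2\le C\,\|PU_0^n-PU_0\|_{\D{A}\times\D{\p}}^2$, whence (b) follows because $PU_0^n\to PU_0$ in $P\Hb$.

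The main obstacle is not the energy identities themselves, which are inherited essentially verbatim from Theorems~\ref{thm1} and \ref{t0}, but the \emph{bookkeeping of smallness}: one must show that each quadratic term carries a coefficient controlled purely by the \emph{low} $\Ha$-norms of the two compared trajectories, and not by their $\Hb$- or $\Hc$-norms. Only then does the hypothesis $\|U_0^n\|_{\Ha},\|U_0\|_{\Ha}\le C\rho$ suffice to close the estimate. This is precisely what legitimizes the subsequent density/diagonal argument, since one cannot approximate an $\Hb$-datum by $\Hc$-data while preserving $\Hc$-smallness, whereas one \emph{can} preserve $\Ha$-smallness of the approximants.
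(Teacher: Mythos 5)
Your proposal is correct and takes essentially the same route as the paper's proof: both form the difference $w_n^\tau=u_n^\tau-u_0^\tau$, note that it satisfies \eqref{nll} with the bilinear forcing $G(w_n^\tau)$, estimate $(G(w_n^\tau),f)\leqslant C(\varepsilon)\rho^2\|(w_n^\tau,(w_n^\tau)_t,(w_n^\tau)_{tt})\|_{\Hb}^2+\varepsilon\|f\|_2^2$ using the $\Ha$-smallness propagated in time by Theorem~\ref{thm1}, absorb this into the dissipation of the multiplier scheme to obtain $\E^\tau[w_n^\tau](t)+C_1\int_0^t\E^\tau[w_n^\tau](\sigma)\,d\sigma\leqslant C_2\|U_0^n-U_0\|_{\Hb}^2$, and dispose of part (b) by rerunning the Theorem~\ref{t0} estimates for the Westervelt difference. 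The only blemish is your claim that $w_n^\tau$ has \emph{zero} initial data (its data is $U_0^n-U_0$), but since your concluding inequality correctly carries $\|U_0^n-U_0\|_{\Hb}^2$ on the right-hand side, this slip does not affect the argument.
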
 \begin{proof}
	Let $u_n^\tau(t) := U^\tau(t,U_0^n)$, $u_0^\tau(t):= U^\tau(t,U_0)$ and  $w_n^\tau := u_n^\tau-u_0^\tau.$ Notice that $w_n^\tau$ satisfies \eqref{nll} with $G(w_n^\tau)$ given by \begin{align}
	G(w_n^\tau) = {u_n^\tau}_{tt} w_n^\tau  + u_0^\tau({w_n^\tau}_{tt}) + ({u_n^\tau+u_0^t}_t){w_n^\tau}_t 
	\end{align} which needs to be tested against the multipliers ${w_n^\tau}_{tt}, {w_n^\tau}_t, {w_n^\tau}$ and $A{w_n^\tau}.$ First notice that for all $\varepsilon>0$ and $f \in L^2(\Omega)$, inequality \eqref{m29} gives\begin{align*}
	(G(w_n^\tau,f) &\leqslant C(\varepsilon)\left[\|{u_n^\tau}_{tt} w_n^\tau\|_2^2  + \|u_0^\tau({w_n^\tau}_{tt})\|_2^2 + \|({u_n^\tau+u_0^t}_t){w_n^\tau}_t\|_2^2 \right] + \varepsilon \|f\|_2^2 \\ & \leqslant C(\varepsilon)(\|U_0^n\|_{\Ha}^2 + \|U_0\|_{\Ha}^2) \|({w_n^\tau},{w_n^\tau}_t,{w_n^\tau}_{tt})\|_{\Hb}^2 + \varepsilon\|f\|_2^2,\\ & \leqslant C(\varepsilon)\rho^2 \|({w_n^\tau},{w_n^\tau}_t,{w_n^\tau}_{tt})\|_{\Hb}^2 + \varepsilon\|f\|_2^2,
	\end{align*} therefore, all the nonlinear terms can be estimated above by $$(C(\varepsilon)\rho^2 + \varepsilon)\|({w_n^\tau},{w_n^\tau}_t,{w_n^\tau}_{tt})\|_{\Hb}^2$$ and the damping terms (remember $\gamma^\tau, b, c > 0$ and see steps 1 to 4 of the proof of Theorem \ref{thm1} on Section \ref{unlow}) can absorb the energy terms since we have smallness of $\rho$ and we can choose $\varepsilon$ small. Therefore, $\E^\tau[w_n^\tau]$ (energy in $\Hb$ with respect to $w_n^\tau$) is such that \begin{align}
	\E^\tau[w_n^\tau](t) + C_1\int_0^t \E^\tau[w_n^\tau](\sigma)d\sigma \leqslant C_2\|U_0^n - U_0\|_{\Hb},
	\end{align} which implies part a). Part b) follows by similar procedure done with respect to equation \eqref{jmgt0}.
\end{proof} Let $U_0 \in \Hb$ and let $\{U_0^n\}_{n \in \mathbb{N}} \subset \Hc$ such that $U_0^n \to U_0$ in $\Hb$ with renormalized $\Ha$-norm sufficiently small. Then, combining \eqref{eq1} and Lemma \ref{conv2} we have:\begin{align}\label{eq4}
	\|PU^\tau(t,U_0) - U^0(t,PU_0)\|_{\D{A} \times \D{\p}} &\leqslant  \|PU^\tau(t,U_0)-PU^\tau(t,U_0^n)\|_{\D{A} \times \D{\p}} \nonumber \\ & + \|PU^\tau(t,U_0^n)-PU^0(t,U_0^n)\|_{\D{A} \times \D{\p}}\\& +\|U^0(t,PU_0^n)-U^0(t,PU_0)\|_{\D{A} \times \D{\p}} \to 0, \end{align} as $n \to \infty.$ The strong convergence in finite time is then proved.
	
	For the infinity time case we appeal for the uniform (in $\tau$) exponential stability in the following manner: we let $T_\varepsilon$ denote a time big enough such that $$\int_{T_\varepsilon}^\infty \E_{x^\tau}(\sigma)d\sigma \leqslant C \int_{T_\varepsilon}^\infty e^{-\alpha \sigma}d\sigma = -\dfrac{C}{\alpha}e^{-\alpha\sigma}\biggr\rvert_{T_\varepsilon}^\infty = \dfrac{C}{\alpha}e^{-\alpha T_\varepsilon} \leqslant \dfrac{\varepsilon}{2},$$ which is possible since $2\E_{x^t}(t) \leqslant \|Au^\tau(t)\|_2^2 + \|Au^0(t)\|_2^2 + \|\p u^\tau_t(t)\|_2^2 + \|\p u^0_t(t)\|_2^2$ and therefore, uniform stablity is inherited. Finally, by taking $T = T_\varepsilon$ in our finite time proof, the result follows.
	\fi
	
%	\bibliographystyle{abbrv} 
%\bibliography{ref1.bib}

%\newpage

\appendix

\section{Proof of Theorem \ref{wph2}} \label{app}

As a starting point, we are going to use (see \cite{bongarti,mgtp1,jmgt}), that given $f \in L^1(0,T; \D{\p})$ and $\alpha > 0$, the linear problem \begin{equation}
\label{alp} \tau u_{ttt} + \alpha u_{tt} + c^2A u + bA u_t = f,
\end{equation} is wellposed -- in the variable $U = (u,u_t,u_{tt})$ -- and exponentially stable (with rates independent of $\tau$ for $\tau$ small) for initial data in $\mathbb{H}_i$ ($i = 0,1,2$). This is to say that, denoting by $t \mapsto S(t)$ the semigroup generated by the evolution \eqref{alp}, there exist constants $\omega_i, M_i >0$, ($i = 0,1,2$) such that \begin{equation}
\label{expi} \|U(t)\|_{\mathbb{H}_i^\tau} = \|S(t)U_0\|_{\mathbb{H}_i^\tau} \leqslant M_ie^{-\omega_i t} \|U_0\|_{\mathbb{H}_i^\tau}.
\end{equation}

Define $X$ as the set \begin{equation*}
X = \left\{W = (w,w_t,w_{tt})^\top \in C(0,T; \mathbb{H}_2); \sup_{t \in [0,T]} \|W(t)\|_{\mathbb{H}_2^\tau} < \infty \ \mbox{and} \ \sup_{t \in [0,T]}\|W(t)\|_{\mathbb{H}_0^\tau} < \eta \right\}
\end{equation*}  ($\eta > 0$ will be taken to be sufficiently small later) and equip it with the norm \begin{equation*}\label{norm}\|W\|_{X}^2 := \sup_{t \in [0,T]} \|W(t)\|_{\mathbb{H}_2^\tau}^2.\end{equation*}

Recalling the interpolation inequalities \begin{equation}\|g\|_{L^\infty} \leqslant C\|g\|_{\D{\p}}^{1/2}\|g\|_{\D{A}}^{1/2}, \ g \in \D{A}\end{equation} and \begin{equation}\|g\|_{L^4} \leqslant C\|g\|_2^{1/4}\|g\|_{\D{\p}}^{3/4} \ \ g \in \D{\p}\end{equation} we observe that if $W = (w,w_t,w_{tt}) \in X$, it follows that ${w},{w}_t \in \D{A} \hookrightarrow L^\infty(\Omega)$ and $w_{tt} \in \D{\p} \hookrightarrow L^4(\Omega)$ ($n = 2,3,4$), for each $t \in [0,T]$, and therefore $f({w}) := 2k({w}_t^2 + {w}{w}_{tt}) \in C(0,T;\D{\p}).$ This means that for each $W \in X$, $f(w)$ qualifies to be the right hand side of \eqref{alp}, and therefore it makes well defined the application $\Upsilon$ that associates each $W \in X$ to the solution $(u,u_t,u_{tt})^\top = U := \Upsilon(W) \in C(0,T; \mathbb{H}_2)$ for \eqref{alp} with initial condition $U_0 = (u(0),u_t(0),u_{tt}(0)) ^\top \in \mathbb{H}_2$. Moreover, the solution $U$ is represented by the variation of parameters formula, i.e., for each $t \in [0,T],$\begin{equation}
\label{wpf} U(t) = \Upsilon(W)(t) = S(t)U_0 + \int_0^t S(t-\sigma)\underbrace{(0,0,\tau^{-1}f(w(t)))^\top}_{:=F_\tau(W)(t)} d\sigma.
\end{equation} 

In addition, $\Upsilon$ maps $X$ into itself. In fact, for each $t \in [0,T]$, uniform (in $\tau$) exponential stability implies that\begin{align}
\|\Upsilon(W)(t)\|_{\mathbb{H}_2^\tau} & \leqslant \|S(t)U_0\|_{\mathbb{H}_2^\tau} + \left\|\int_0^tS(t-\sigma)F_\tau(W)(\sigma)d\sigma\right\|_{\mathbb{H}_2^\tau} \nonumber \\ &\leqslant M_2\|U_0\|_{\mathbb{H}_2^\tau} + \int_0^t M_2e^{-\omega_2 (t-\sigma)} \|F_\tau(W)(\sigma)\|_{\mathbb{H}_2^\tau}d\sigma \nonumber\\  &\leqslant M_2\left(\|U_0\|_{\mathbb{H}_2^\tau} + \dfrac{C_{\omega}}{\tau}\sup_{t \in [0,T]} \|f(w)(t)\|_{\D{\p}}\right). \label{mm}
\end{align} and again for each $t \in [0,T]$ -- mostly omitted on the computations below --  we estimate \begin{align}(2k)^{-1}\|f({w})\|_{\D{\p}} &\sim \|\nabla({w}_t^2 + {w} {w}_{tt})\|_2 \nonumber \\ &= \|2{w}_t \nabla {w}_t + {w} \nabla {w}_{tt} + {w}_{tt} \nabla {w}\|_2 \nonumber \\ &\leqslant 2\|{w}_t\|_{L^\infty} \|\nabla {w}_t\|_2 + \|{w}\|_{L^\infty}\|\nabla {w}_{tt}\|_2 + \|{w}_{tt}\|_{L^4}\|\nabla {w}\|_{L^4} \nonumber \\ &\leqslant C \left[\|{w}_t\|_{\D{\p}}^{1/2}\|{w}_t\|_{\D{A}}^{1/2}\|\nabla {w}_t\|_2 + \|{w}\|_{\D{\p}}^{1/2}\|{w}\|_{\D{A}}^{1/2}\|\nabla {w}_{tt}\|_2\right] \nonumber \\ &+ C\left[\|{w}_{tt}\|_2^{1/4}\|{w}_{tt}\|_{\D{\p}}^{3/4}\|\nabla {w}\|_2^{1/4}\|\nabla {w}\|_{\D{\p}}^{3/4}\right] \nonumber \\ &\leqslant C\|W(t)\|_{\mathbb{H}_0^\tau}^{1/2}\|W(t)\|_{\mathbb{H}_2^\tau}^{3/2} \leqslant \left[\sup_{t \in [0,T]}\|W(t)\|_{\mathbb{H}_2^\tau}\right]^{3/2}\eta^{1/2},\end{align} and then, back in \eqref{mm} we conclude that \begin{equation}\label{invar}\sup_{t \in [0,T]}\|\Upsilon(W)(t)\|_{\mathbb{H}_2^\tau} \leqslant M_2\left(\|U_0\|_{\mathbb{H}_2^\tau} + \dfrac{2k C_{\omega}}{\tau}\left[\sup_{t \in [0,T]}\|W(t)\|_{\mathbb{H}_2^\tau}\right]^{3/2}\eta^{1/2}\right) < \infty.\end{equation}

Similarly for $\mathbb{H}_0^\tau$, for each $t \in [0,T]$ we have have\begin{equation}\label{invar2}
\|\Upsilon(W)(t)\|_{\mathbb{H}_0^\tau} \leqslant M_1\left(\|U_0\|_{\mathbb{H}_0^\tau} + \dfrac{C_{\omega}}{\tau}\sup_{t \in [0,T]}\|f(w)(t)\|_2\right).
\end{equation} Moreover, for each $t \in [0,T]$ -- again mostly ommited -- it holds that \begin{align*}
(2k)^{-1}\|f(w)(t)\|_2 &= \|w_t^2 + uu_{tt}\|_2 \\ &\leqslant \|w_t\|_{L^4}^2 + \|u\|_{L^\infty}\|u_{tt}\|_2 \\ &\leqslant C\left[\|\p w_t\|_2^2 + \|\p u\|_2^{1/2}\|Au\|_2^{1/2}\|u_{tt}\|_2\right] \\ &\leqslant C\left\{\left[\sup_{t \in [0,T]}\|W(t)\|_{\mathbb{H}_0^\tau}\right]^2 + \left[\sup_{t \in [0,T]}\|W(t)\|_{\mathbb{H}_0^\tau}\right]^{3/2}\left[\sup_{t \in [0,T]}\|W(t)\|_{\mathbb{H}_2^\tau}\right]^{1/2}\right\} \\ &\leqslant C\left\{\eta^2 + \eta^{3/2}\left[\sup_{t \in [0,T]}\|W(t)\|_{\mathbb{H}_2^\tau}\right]^{1/2}\right\}, 
\end{align*} then, taking $\eta = \eta(\tau)$ small enough so that $$\dfrac{2kC}{\tau}\left[\eta^2 + \eta^{3/2}\left(\sup_{t \in [0,T]}\|W(t)\|_{\mathbb{H}_2^\tau}\right)^{1/2}\right] < \dfrac{\eta}{2M1}$$ and, as a consequence, $\rho = \rho(\tau)$ small enough such that $\rho < \dfrac{\eta}{2M1}$, we can return to \eqref{invar2} to conclude that if $\|U_0\|_{\mathbb{H}_0^\tau} < \rho$ we have \begin{equation}\label{invar22}\sup_{t \in [0,T]}\|\Upsilon(W)(t)\|_{\mathbb{H}_0^\tau} \leqslant M_1\left\{\rho + \dfrac{2kC_{\omega}}{\tau}\left[\eta^2 + \eta^{3/2}\left(\sup_{t \in [0,T]}\|W(t)\|_{\mathbb{H}_2^\tau}\right)^{1/2}\right]\right\} < \eta,\end{equation} which completes the proof of invariance.

Next we prove that $\Upsilon$ is a contraction if $\eta$ is sufficiently small. For this, let $W_1 = (w_1,{w_1}_t,{w_1}_{tt})^\top, W_2 = (w_2, {w_2}_{t},{w_2}_{tt})^\top \in X$ and notice that,
\begin{align}
\label{sme2} \|\Upsilon(W_1) - \Upsilon(W_2)\|_{X}  =\sup_{t \in [0,T]}\left\|\int_0^t  S(t-\sigma) \left[F_\tau(W_1)(\sigma)-F_\tau(W_2)(\sigma)\right]d\sigma\right\|_{\mathbb{H}_2^\tau}\nonumber \\
% \nonumber \\ &\leqslant \sup_{t \in [0,T]}\int_0^t \left\|F_\tau(W_1)(\sigma)-F_\tau(W_2)(\sigma)\right\|_{\mathbb{H}_2^\tau}d\sigma  \nonumber\\ &= \dfrac{1}{\tau} \sup_{t \in [0,T]} \int_0^t \|f(w_1)(\sigma)-f(w_2)(\sigma)\|_{\D{\p}}d\sigma \nonumber \\
  \leqslant \dfrac{C_{\omega}}{\tau} \sup_{t \in [0,T]} \left\|f(w_1)(t)-f(w_2)(t)\right\|_{\D{\p}}. \end{align}

 Next, observe that for each $t \in [0,T]$ -- mostly omitted on the computations below -- we have \begin{align}(2k)^{-1}\|f(w_1)(t) -f(w_2)(t)\|_{\D{\p}} &=\|({w_1}_t+{w_2}_t)({w_1}_t - {w_2}_{t}) + ({w_1}-{w_2}){w_1}_{tt} + ({w_1}_{tt}-{w_2}_{tt}){w_2}\|_{\D{\p}} \nonumber \\ &\leqslant Q_1(t) + Q_2(t) + Q_3(t),
\end{align} 

where $$Q_1(t) = \left\|({w_1}_t+{w_2}_t)({w_1}_t-{w_2}_t)\right\|_{\D{\p}}, \ \ Q_2(t) = \|({w_1}-{w_2}){w_1}_{tt}\|_{\D{\p}}, \ \ Q_3(t) = \|({w_1}_{tt}-{w_2}_{tt}){w_2}\|_{\D{\p}}$$ and then we estimate these three quantities (for each $t \in [0,T]$):\begin{align*}
Q_1(t) &= \left\|({w_1}_t+{w_2}_t)({w_1}_t-{w_2}_t)\right\|_{\D{\p}}  \nonumber \\ & \sim \left\|\nabla\left[({w_1}_t+{w_2}_t)({w_1}_t-{w_2}_t)\right]\right\|_{2} \nonumber \\ & = \left\|({w_1}_t+{w_2}_t)\nabla({w_1}_t-{w_2}_t)\right\|_{2}+\left\|\nabla\left[({w_1}_t+{w_1}_t)\right]({w_1}_t-{w_2}_t)\right\|_{2}\nonumber \\ &\leqslant \|{w_1}_t +{w_2}_t\|_{L^\infty}\|{w_1}_t - {w_2}_t\|_{\D{\p}} + \|\nabla ({w_1}_t + {w_2}_t)\|_{L^4}\|{w_1}_t - {w_2}_t\|_{L^4} \nonumber \\ &\leqslant C\|{w_1}_t +{w_2}_t\|_{\D{\p}}^{1/2}\|{w_1}_t +{w_2}_t\|_{\D{A}}^{1/2}\|{w_1}_t - {w_2}_t\|_{\D{\p}} \nonumber \\&+ C\|\nabla ({w_1}_t + {w_2}_t)\|_{2}^{1/4}\|\nabla ({w_1}_t + {w_2}_t)\|_{\D{\p}}^{3/4}\|{w_1}_t - {w_2}_t\|_{2}^{1/4}\|{w_1}_t - {w_2}_t\|_{\D{\p}}^{3/4}
\nonumber \\ &\leqslant C\left[\|(W_1+W_2)(t)\|_{\mathbb{H}_0^\tau}\right]^{1/2}\left[\|(W_1+W_2)(t)\|_{\mathbb{H}_2^\tau}\right]^{1/2}\|({W_1}-{W_2})(t)\|_{\mathbb{H}_2^\tau} \nonumber \\ &+C\left[\|(W_1+W_2)(t)\|_{\mathbb{H}_0^\tau}\right]^{1/4}\left[\|(W_1+W_2)(t)\|_{\mathbb{H}_2^\tau}\right]^{3/4} \|({W_1}-{W_2})(t)\|_{\mathbb{H}_2^\tau} \nonumber \\&\leqslant C\left[\sup_{t \in [0,T]}\|(W_1+W_2)(t)\|_{\mathbb{H}_0^\tau}\right]^{1/2}\left[\sup_{t \in [0,T]}\|(W_1+W_2)(t)\|_{\mathbb{H}_2^\tau}\right]^{1/2}\sup_{t \in [0,T]}\|({W_1}-{W_2})(t)\|_{\mathbb{H}_2^\tau} \nonumber \\ &+C\left[\sup_{t \in [0,T]}\|(W_1+W_2)(t)\|_{\mathbb{H}_0^\tau}\right]^{1/4}\left[\sup_{t \in [0,T]}\|(W_1+W_2)(t)\|_{\mathbb{H}_2^\tau}\right]^{3/4} \sup_{t \in [0,T]}\|({W_1}-{W_2})(t)\|_{\mathbb{H}_2^\tau} \nonumber \\&\leqslant C\left\{\eta^{1/2}\left[\sup_{t \in [0,T]}\|(W_1+W_2)(t)\|_{\mathbb{H}_2^\tau}\right]^{1/2} + \eta^{1/4}\left[\sup_{t \in [0,T]}\|(W_1+W_2)(t)\|_{\mathbb{H}_2^\tau}\right]^{3/4}\right\}\|W_1-W_2\|_X \nonumber \\ &\leqslant %C(\eta^{1/2}\beta^{1/2}+\eta^{1/4}\beta^{3/4})\|W_1 - W_2\|_X \leqslant 
C\eta^{1/4}\|W_1 - W_2\|_X,
\end{align*}where $C$ is a constant that does not depend on time.% where $\beta = \sup\limits_{t \in [0,T]}\|(W_1+W_2)(t)\|_{\mathbb{H}_2^\tau} < \infty.$ 
\begin{align*}
Q_2(t) &=\|({w_1}-{w_2}){w_1}_{tt}\|_{\D{\p}}  \nonumber \\&\sim \|{w_1}_{tt}\nabla({w_1}-{w_2}) + ({w_1} - {w_2})\nabla {w_1}_{tt}\|_2 \\ & \leqslant C\|{w_1}_{tt}\|_2^{1/4}\|{w_1}_{tt}\|_{\D{\p}}^{3/4}\|\nabla({w_1}-{w_2})\|_{2}^{1/4}\|\nabla({w_1}-{w_2})\|_{\D{\p}}^{3/4} \\ &+ C\|{w_1}-{w_2}\|_{\D{\p}}^{1/2}\|{w_1}-{w_2}\|_{\D{A}}^{1/2}\|\nabla {w_1}_{tt}\|_2 \nonumber \\ &\leqslant \dfrac{C}{\tau}[\|W_1(t)\|_{\Ha^\tau}]^{1/4}[\|W_1(t)\|_{\Hc^\tau}]^{3/4}\|(W_1-W_2)(t)\|_{\mathbb{H}_2^\tau}+\dfrac{C}{\tau}[\|(W_1-W_2)(t)\|_{\Ha^\tau}]^{1/2}\|(W_1-W_2)(t)\|_{\mathbb{H}_2^\tau} \nonumber \\& + \dfrac{C}{\tau} [\|(W_1-W_2)(t)\|_{\Ha^\tau}]^{1/2}[\|(W_1-W_2)(t)\|_{\Hc^\tau}] \|(W_1-W_2)(t)\|_{\mathbb{H}_2^\tau}\nonumber \\ &\leqslant \dfrac{C}{\tau}\left[\sup_{t \in [0,T]}\|W_1(t)\|_{\Ha^\tau}\right]^{1/4}\left[\sup_{t \in [0,T]}\|W_1(t)\|_{\Hc^\tau}\right]^{3/4}\sup_{t \in [0,T]}\|(W_1-W_2)(t)\|_{\mathbb{H}_2^\tau}\nonumber \\ &+ \dfrac{C}{\tau}\left[\sup_{t \in [0,T]}\|(W_1-W_2)(t)\|_{\Ha^\tau}\right]^{1/2}\sup_{t \in [0,T]}\|(W_1-W_2)(t)\|_{\mathbb{H}_2^\tau} \nonumber \\& + \dfrac{C}{\tau}\left[\sup_{t \in [0,T]}\|(W_1-W_2)(t)\|_{\Ha^\tau}\right]^{1/2}\left[\sup_{t \in [0,T]}\|(W_1-W_2)(t)\|_{\Hc^\tau}\right] \sup_{t \in [0,T]}\|(W_1-W_2)(t)\|_{\mathbb{H}_2^\tau}
\\ &\leqslant \dfrac{C}{\tau}\left\{\eta^{1/4}\left[\sup_{t \in [0,T]}\|W_1(t)\|_{\Hc^\tau}\right]^{3/4}+ \eta^{1/2} + \eta^{1/2}\left[\sup_{t \in [0,T]}\|(W_1-W_2)(t)\|_{\Hc^\tau}\right]\right\}\|W_1-W_2\|_X \nonumber \\&  %\sup_{t \in [0,T]}\|(W_1-W_2)(t)\|_{\mathbb{H}_2^\tau} %\dfrac{C}{\tau}\left\{[\|W(t)\|_{\Ha^\tau}]^{1/4}[\|W(t)\|_{\Hc^\tau}]^{3/4}+[\|(W_1-W_2)(t)\|_{\Ha^\tau}]^{1/2} + [\|(W_1-W_2)(t)\|_{\Ha^\tau}]^{1/2}[\|(W_1-W_2)(t)\|_{\Hc^\tau}]\right\} \|{w_1}-{w_2}\|_{X}\nonumber \\& \leqslant 
%C_\tau(\rho^{1/8}\beta^{3/8} + \rho^{1/4} + \rho^{1/4}\beta) \|{w_1}-{w_2}\|_{X^\beta} 
\leqslant \dfrac{C}{\tau}\eta^{1/4}\|W_1 - W_2\|_X,
\end{align*} where $C$ is a constant that does not depend on time.\begin{align*}
Q_3(t) &= \|({w_1}_{tt}-{w_2}_{tt}){w_2}\|_{\D{\p}} \nonumber \\ &\sim \|{w_2}\nabla({w_1}_{tt}-{w_2}_{tt}) + ({w_1}_{tt} - {w_2}_{tt})\nabla {w_2}\|_2 \\ & \leqslant C\|{w_1}_{tt}-{w_2}_{tt}\|_2^{1/4}\|{w_1}_{tt}-{w_2}_{tt}\|_{\D{\p}}^{3/4}\|\nabla{w_2}\|_{2}^{1/4}\|\nabla{w_2}\|_{\D{\p}}^{3/4} \\ &+ C\|{w_2}\|_{\D{\p}}^{1/2}\|{w_2}\|_{\D{A}}^{1/2}\|\nabla({w_1}_{tt}-{w_2}_{tt})\|_2 \\ &\leqslant \dfrac{C}{\tau}\left\{[\|W_1\|_{\Ha^\tau}]^{1/4}[\|W_1(t)\|_{\Hc^\tau}]^{3/4}+ [\|(W_1-W_2)(t)\|_{\Ha^\tau}]^{1/2}[\|(W_1+W_2)(t)\|_{\Hc^\tau}]^{1/2}\right\} \|({W_1}-{W_2})(t)\|_{\Hc^\tau} \\ &\leqslant \dfrac{C}{\tau}\left\{\eta^{1/4}\left[\sup_{t \in [0,T]}\|W_1(t)\|_{\Hc^\tau}\right]^{3/4} + \eta^{1/2}\left[\sup_{t \in [0,T]}\|(W_1+W_2)(t)\|_{\Hc^\tau}\right]^{1/2}\right\}\leqslant \dfrac{C}{\tau}\eta^{1/4}\|W_1 - W_2\|_X,
\end{align*} where $C$ is a constant that does not depend on time.

Therefore, back in \eqref{sme2} we have \begin{align}
\label{sme3} \|\Upsilon(W_1) - \Upsilon(W_2)\|_{X} & \leqslant \dfrac{C_{\omega}}{\tau} \sup_{t \in [0,T]} \left\|f(w_1)(t)-f(w_2)(t)\right\|_{\D{\p}} \nonumber \\&\leqslant \dfrac{2kC_{\omega}}{\tau}\sup_{t \in [0,T]}\left[Q_1(t) + Q_2(t) + Q_3(t)\right] \leqslant \dfrac{2kTC}{\tau^2}\eta^{1/4}\|W_1-W_2\|_X.\end{align} which means that $\Upsilon$ is a contraction as long as we take a -- possibly smaller -- $\eta=\eta(\tau)$ such that $\eta < \left(\dfrac{\tau^2}{2kC_{\omega}}\right)^{16}$. This completes the proof of local wellposedness.

\bibliographystyle{acm} 
\bibliography{ref22.bib}

\begin{thebibliography}{10}

\bibitem{kaltev2}


\bibitem{bongarti}
{\sc Bongarti, M., Charoenphon, S., and Lasiecka, I.}
\newblock Singular thermal relaxation limit for the {Moore}-{Gibson}-{Thompson}
  equation arising in propagation of acoustic waves.
\newblock {\em Semigroups of Operators: Theory and Applications SOTA-2018\/}
  (2019), 147--182.
\newblock Publisher: Springer.

\bibitem{bucci2020}
{\sc Bucci, F., and Eller, M.}
\newblock The {Cauchy}–{Dirichlet} problem for the
  {Moore}–{Gibson}–{Thompson} equation.
\newblock {\em arXiv preprint arXiv:2004.11167\/} (2020).

\bibitem{bucci2017}
{\sc Bucci, F., and Pandolfi, L.}
\newblock On the regularity of solutions to the {Moore}–{Gibson}-{Thompson}
  equation: a perspective via wave equations with memory.
\newblock {\em Journal of Evolution Equations 20}, 3 (2019), 1--31.
\newblock Publisher: Springer.

\bibitem{ap5}
{\sc Cattaneo, C.}
\newblock Sulla conduzione del calore.
\newblock {\em Atti Sem. Mat. Fis. Univ. Modena 3\/} (1948), 83--101.

\bibitem{ap4}
{\sc Cattaneo, C.}
\newblock A form of heat–conduction equations which eliminates the paradox of
  instantaneous propagation.
\newblock {\em Comptes Rendus 247\/} (1958), 431.

\bibitem{HCP}
{\sc Christov, C., and Jordan, P.}
\newblock Heat conduction paradox involving second–sound propagation in
  moving media.
\newblock {\em Physical review letters 94}, 15 (2005), 154301.
\newblock Publisher: APS.

\bibitem{crig}
{\sc Crighton, D.~G.}
\newblock Model equations of nonlinear acoustics.
\newblock {\em Annual Review of Fluid Mechanics 11}, 1 (1979), 11--33.
\newblock Publisher: Annual Reviews 4139 El Camino Way, PO Box 10139, Palo
  Alto, CA 94303-0139, USA.

\bibitem{mem3}
{\sc Dell'Oro, F., Lasiecka, I., and Pata, V.}
\newblock The {Moore}–{Gibson}–{Thompson} equation with memory in the
  critical case.
\newblock {\em Journal of Differential Equations 261}, 7 (2016), 4188--4222.
\newblock Publisher: Elsevier.

\bibitem{pata}
{\sc Dell'Oro, F., and Pata, V.}
\newblock On the {Moore}–{Gibson}–{Thompson} equation and its relation to
  linear viscoelasticity.
\newblock {\em Applied Mathematics and Optimization 261}, 7 (2016), 4188--4222.
\newblock Publisher: Elsevier.

\bibitem{hieber}
{\sc Denk, R., Hieber, M., and Pruss, J.}
\newblock R–boundedness, {Fourrier} multipliers and problems of elliptic and
  parabolic type.
\newblock {\em Memoires of American Mathematical Society 788\/} (2003).
\newblock Publisher: AMS.

\bibitem{ap2}
{\sc Ekoue, F., d’Halloy, A.~F., Gigon, D., Plantamp, G., and Zajdman, E.}
\newblock Maxwell–{Cattaneo} regularization of heat equation.
\newblock {\em World Academy of Science, Engineering and Technology 7\/}
  (2013), 05--23.

\bibitem{fatto}
{\sc Fattorini, H.~O.}
\newblock {\em The {Cauchy} {Problem}}.
\newblock Addison Wesley, 1983.

\bibitem{ham}
{\sc Hamilton, M.~F., Blackstock, D.~T., and {others}}.
\newblock {\em Nonlinear acoustics}.
\newblock Academic Press, 1997.

\bibitem{Jordan2}
{\sc Jordan, P.~M.}
\newblock Nonlinear acoustic phenomena in viscous thermally relaxing fluids:
  {Shock} bifurcation and the emergence of diffusive solitons.
\newblock {\em The Journal of the Acoustical Society of America 124}, 4 (2008),
  2491--2491.
\newblock Publisher: ASA.

\bibitem{Jordan0}
{\sc Jordan, P.~M.}
\newblock Second-sound phenomena in inviscid, thermally relaxing gases.
\newblock {\em Discrete \& Continuous Dynamical Systems-B 19}, 7 (2014), 2189.
\newblock Publisher: American Institute of Mathematical Sciences.

\bibitem{kalt}
{\sc Kaltenbacher, B.}
\newblock Mathematics of nonlinear acoustics.
\newblock {\em Evolution Equations and Control Theory 4}, 4 (2015), 447--491.

\bibitem{WE}
{\sc Kaltenbacher, B., and Lasiecka, I.}
\newblock Global existence and exponential decay rates for the {Westervelt}'s
  equation.
\newblock {\em Discrete and Continuous Dynamical Systems–Series S 2}, 3
  (2009), 503--525.

\bibitem{mgtp1}
{\sc Kaltenbacher, B., Lasiecka, I., and Marchand, R.}
\newblock Wellposedness and exponential decay rates for the
  {Moore}–{Gibson}–{Thompson} equation arising in high intensity
  ultrasound.
\newblock {\em Control and Cybernetics 40\/} (2011), 971--988.

\bibitem{jmgt}
{\sc Kaltenbacher, B., Lasiecka, I., and Pospieszalska, M.~K.}
\newblock Well-posedness and exponential decay of the energy in the nonlinear
  {Jordan}–{Moore}–{Gibson}–{Thompson} equation arising in high intensity
  ultrasound.
\newblock {\em Mathematical Models and Methods in Applied Sciences 22}, 11
  (2012), 1250035.
\newblock Publisher: World Scientific.

\bibitem{kaltev1}
{\sc Kaltenbacher, B., and Nikolić, V.}
\newblock Vanishing relaxation time limit of the
  {Jordan}–{Moore}–{Gibson}–{Thompson} wave equation with {Neumann} and
  absorbing boundary conditions.
\newblock {\em Pure and Applied Functional Analysis 5\/} (2020), 1--26.

\bibitem{kato}
{\sc Kato, T.}
\newblock {\em Perturbation {Theory} for {Linear} {Operators}}.
\newblock Springer-Verlag Berlin Heidelberg, 1976.

\bibitem{ong}
{\sc Lasiecka, I., and Ong, J.}
\newblock Global solvability and uniform decays of solutions to quasilnear
  hyperbolic equations with nonlinear boundary conditions.
\newblock {\em Communications on PDE 24}, 11-12 (1999), 2069--2107.
\newblock Publisher: Francis and Taylor.

\bibitem{las-tat}
{\sc Lasiecka, I., Tataru, D., and {others}}.
\newblock Uniform boundary stabilization of semilinear wave equations with
  nonlinear boundary damping.
\newblock {\em Differential and integral Equations 6}, 3 (1993), 507--533.
\newblock Publisher: Khayyam Publishing, Inc.

\bibitem{lunardi}
{\sc Lunardi, A.}
\newblock {\em Analytic {Semigroups} and {Optimal} regularity in parabolic
  problems}.
\newblock Birkhäuser, 1995.

\bibitem{TrigMGT}
{\sc Marchand, R., McDevitt, T., and Triggiani, R.}
\newblock An abstract semigroup approach to the third-order
  {Moore}–{Gibson}–{Thompson} partial differential equation arising in
  high-intensity ultrasound: structural decomposition, spectral analysis,
  exponential stability.
\newblock {\em Mathematical Methods in the Applied Sciences 35}, 15 (2012),
  1896--1929.
\newblock Publisher: Wiley Online Library.

\bibitem{wilke}
{\sc Meyer, S., and Wilke, M.}
\newblock Optimal regularity and long-time behavior of solutions for the
  {Westervelt} equations.
\newblock {\em Applied Mathematics and Optimization 64\/} (2011), 257--271.
\newblock Publisher: Springer Verlag.

\bibitem{pellicer2017}
{\sc Pellicer, M., and Said-Houari, B.}
\newblock Wellposedness and {Decay} {Rates} for the {Cauchy} {Problem} of the
  {Moore}–{Gibson}–{Thompson} {Equation} {Arising} in {High} {Intensity}
  {Ultrasound}.
\newblock {\em Applied Mathematics \$\&\$ Optimization 80}, 2 (Dec. 2017),
  447--478.
\newblock Publisher: Springer Science and Business Media LLC.

\bibitem{ap3}
{\sc Straughan, B.}
\newblock {\em Heat waves}.
\newblock Springer Science \& Business Media, 2011.

\end{thebibliography}

\end{document}